\newtheoremstyle{slplain}% name
{.1\baselineskip\@plus.2\baselineskip\@minus.2\baselineskip}% Space above
{.1\baselineskip\@plus.2\baselineskip\@minus.2\baselineskip}% Space below
{}% Body font
{}%Indent amount (empty = no indent, \parindent = para indent)
{\bfseries}%  Thm head font
{.}%       Punctuation after thm head
{ }%      Space after thm head: " " = normal interword space;
\theoremstyle{slplain}
\titlespacing{\section}{0pt}{*0.45}{*0.45}
\titlespacing{\subsection}{0pt}{*0.45}{*0.45}
\titlespacing{\subsubsection}{0pt}{*0.45}{*0.45}
\newcounter{firstbib}
\newcommand{\la}{\left \langle}
\newcommand{\ra}{\right\rangle}
\newcommand{\norm}[1]{\left\lVert #1 \right\rVert}
\newtheorem{theorem}{Theorem}[section]
\newtheorem{corollary}[theorem]{Corollary}
\newtheorem{lemma}[theorem]{Lemma}
\theoremstyle{definition}
\newtheorem{assumption}[theorem]{Assumption}
\theoremstyle{remark}
\newtheorem{remark}[theorem]{Remark}
\numberwithin{equation}{section}
\DeclarePairedDelimiter\floor{\lfloor}{\rfloor}
\title{Mean Field Analysis of  Deep Neural Networks}
\author{Justin Sirignano\footnote{Department of Industrial \& Systems Engineering, University of Illinois at Urbana-Champaign, Urbana, E-mail: jasirign@illinois.edu} \phantom{.}  and Konstantinos Spiliopoulos\footnote{Department of Mathematics and Statistics, Boston University, Boston, E-mail: kspiliop@math.bu.edu}
\thanks{K.S. was partially supported by the National Science Foundation (DMS 1550918) and Simons Foundation Award 672441}\\
}
\begin{document}

\maketitle

\begin{abstract}
We analyze multi-layer neural networks in the asymptotic regime of simultaneously (A) large network sizes and (B) large numbers of stochastic gradient descent training iterations. We rigorously establish the limiting behavior of the multi-layer neural network output. The limit procedure is valid for any number of hidden layers and it naturally also describes the limiting behavior of the training loss. The ideas that we explore are to (a) take the limits of each hidden layer sequentially and (b) characterize the evolution of parameters in terms of their initialization. The limit satisfies a system of deterministic integro-differential equations. The proof uses methods from weak convergence and stochastic analysis. We show that, under suitable assumptions on the activation functions and the behavior for large times, the limit neural network recovers a global minimum (with zero loss for the objective function).
\end{abstract}

\section{Introduction}

Machine learning, and in particular deep learning, has achieved immense success, revolutionizing fields such as image, text, and speech recognition. It is also increasingly being used in engineering, medicine, and finance. However, despite their success in practice, there is currently limited mathematical understanding of deep neural networks. This has motivated recent mathematical research on multi-layer learning models such as  \cite{Ruoyu1}, \cite{Ruoyu2}, \cite{Ruoyu3}, \cite{Lee1}, \cite{Du1}, \cite{Mallat}, \cite{NeuralNetworkLLN}, \cite{NeuralNetworkCLT}, \cite{Montanari}, and \cite{Rotskoff_VandenEijnden2018}.

Neural networks are nonlinear statistical models whose parameters are estimated from data using stochastic gradient descent (SGD) methods. Deep learning uses neural networks with many layers (i.e., ``deep" neural networks), which produces a highly flexible, powerful and effective model in practice. Typically, a neural network with multiple layers between the input and the output layer is called a ``deep" neural network, see for example \cite{Goodfellow}. We analyze multi-layer neural networks that have a fixed number of layers between the input and output layer, and where the number of hidden units in each layer becomes large.

Applications of deep learning include image recognition (see \cite{LeCun} and \cite{Goodfellow}), facial recognition \cite{FacialRecognition}, driverless cars \cite{DriverlessCar}, speech recognition (see \cite{LeCun}, \cite{DeepVoice}, \cite{GoogleDuplex}, and \cite{SpeechRecognition3}), and text recognition (see \cite{GoogleMachineTranslation} and \cite{MachineTranslation2}). Neural networks have also been applied in engineering, robotics, medicine, and finance (see \cite{Ling1}, \cite{Ling2}, \cite{Robotics1}, \cite{Robotics2}, \cite{Robotics3}, \cite{NatureMedicine2}, \cite{Finance1}, \cite{Finance2}, \cite{SirignanoSpiliopoulos2017}, and \cite{SirignanoSpiliopoulos2017_DGM}).

In this paper we characterize multi-layer neural networks in the asymptotic regime of large network sizes and large numbers of stochastic gradient descent iterations. We rigorously prove the limit of the neural network output as the number of hidden units increases to infinity. The proof relies upon weak convergence analysis for stochastic processes. The result can be considered a ``law of large numbers" for the neural network's output when both the network size and the number of stochastic gradient descent steps grow to infinity. We show that the neural network output in the large hidden-units and large SGD-iterates limit depends on paths of representative weights that go from input to output layer. This result is then used to show that, under suitable assumptions, the limit neural network seeks to minimize the limit objective function and achieve zero loss.

Recently, law of large numbers and central limit theorems have been established for neural networks with a single hidden layer \cite{Chizat2018,Szpruch2019,Montanari,Rotskoff_VandenEijnden2018,NeuralNetworkLLN,NeuralNetworkCLT}. %The analysis of multi-layer neural networks requires a different approach than in the single hidden layer case.
For a single hidden layer, one can directly study the weak convergence of the empirical measure of the parameters. However, in a neural network with multiple layers, there is a closure problem when studying the empirical measure of the parameters (which is explained in Section \ref{ChallengesIntro}). Consequently, the law of large numbers for a multi-layer network is not a straightforward extension of the single-layer network result and the analysis involves unique challenges which require new approaches. In this paper we establish the limiting behavior of the output of the neural network.

To illustrate the idea, we consider a multi-layer neural network with two hidden layers:
\begin{eqnarray}
g_{\theta}^{N_1, N_2}(x) = \frac{1}{N_{2}} \sum_{i=1}^{N_{2}} C^i \sigma\left( \frac{1}{N_{1}} \sum_{j=1}^{N_{1}}W^{2,i,j}\sigma\left(W^{1,j}\cdot x\right)\right).\label{Eq:2DNN}
\end{eqnarray}

As we will see in Section \ref{SS:Extension}, the limit procedure can be extended to neural networks with three layers and subsequently to neural networks with any fixed number of hidden layers.

Notice now that (\ref{Eq:2DNN})  can be also written as
\begin{eqnarray}
H^{1,j}(x) &=& \sigma( W^{1,j}\cdot x), \phantom{.....} j =1, \ldots, N_1, \notag \\
Z^{2,i}(x) &=& \frac{1}{N_1} \sum_{j=1}^{N_1} W^{2,i,j} H^{1,j}(x), \phantom{.....} i =1, \ldots, N_2, \notag \\
H^{2,i}(x) &=& \sigma \bigg{(} Z^{2,i}(x) \bigg{)}, \phantom{.....}    \notag \\
g^{N_1, N_2}_{\theta}(x) &=& \frac{1}{N_2} \sum_{i=1}^{N_2} C^i H^{2,i}(x). \label{Eq:DeepNN}
\end{eqnarray}
where $C^i, W^{2,i,j} \in \mathbb{R}$ and $x, W^{1,j} \in \mathbb{R}^{d}$.  The neural network model has parameters
\begin{eqnarray}
\theta = (C^1, \ldots, C^{N_2}, W^{2,1,1} \ldots, W^{2,N_1,N_2}, W^{1,1},\ldots W^{1,N_1}), \notag
\end{eqnarray}
 which must be estimated from data. The number of hidden units in the first layer is $N_1$ and the number of hidden units in the second layer is $N_2$. The multi-layer neural network (\ref{Eq:DeepNN}) includes a normalization factor of $\frac{1}{N_1}$ in the first hidden layer and $\frac{1}{N_2}$ in the second hidden layer.

The  loss function that we focus on in this paper is the mean squared error
\begin{eqnarray}
L^{N_1, N_2}(\theta) =\frac{1}{2} \mathbb{E}_{Y,X} \bigg{[} ( Y - g^{N_1, N_2}_{\theta}(X)  )^2 \bigg{]},
\label{LossFunction}
\end{eqnarray}
where the data $(X,Y) \sim \pi(dx,dy)$. The goal is to estimate a set of parameters $\theta$ which minimizes the objective function (\ref{LossFunction}).

The literature frequently refers to minimizing the mean squared error loss as regression. Our results also hold for a more general class of error functions. In particular, we could have also considered the error function $\mathbb{E}_{Y,X} \Psi( Y - g^{N_1, N_2}_{\theta}(X)  ) $  for a function $\Psi$ that is smooth, convex and satisfies $\displaystyle \min_{x\in\mathbb{R}}\Psi(x)=\Psi(0)=0$. However, for the purposes of simplicity, we will focus on the standard regression task (\ref{LossFunction}).

The stochastic gradient descent (SGD) algorithm for estimating the parameters $\theta$ is, for $k \in\mathbb{N}$,
\begin{eqnarray}
C^i_{k+1} &=& C^i_{k} + \frac{\alpha_C^{N_1, N_2} }{N_2} \big{(} y_k - g^{N_1, N_2}_{\theta_k}(x_k)  \big{)} H_k^{2,i}(x_k), \notag \\
W^{1,j}_{k+1} &=& W^{1,j}_{k} +  \frac{ \alpha_{W,1}^{N_1, N_2} }{ N_1} \big{(} y_k - g^{N_1, N_2}_{\theta_k}(x_k)  \big{)} \left( \frac{1}{N_2} \sum_{i=1}^{N_2} C_k^i \sigma'(Z_k^{2, i}(x_k) )  W^{2,i,j}_k \right)  \sigma'(W^{1,j}_k \cdot x_k) x_k, \notag \\
W^{2,i,j}_{k+1} &=& W^{2,i,j}_{k}  + \frac{\alpha_{W,2}^{N_1, N_2} }{N_1 N_2}  \big{(} y_k - g^N_{\theta_k}(x_k) \big{)} C_k^i \sigma'(Z_k^{2,i}(x_k) )  H^{1,j}_k(x_k), \notag \\
H_k^{1,i}(x_k) &=& \sigma ( W^{1,i}_k  \cdot x_k), \notag \\
Z_k^{2,i}(x_k) &=& \frac{1}{N_1} \sum_{j=1}^{N_1} W^{2,i,j}_k  H^{1,j}_k(x_k), \notag \\
H_k^{2,i}(x_k) &=& \sigma( Z_k^{2,i}(x_k) ), \notag \\
g^{N_1, N_2}_{\theta_k}(x_k)  &=& \frac{1}{N_2} \sum_{i=1}^{N_2} C^i_k H^{2,i}_k(x_k).
\label{SystemDeep}
\end{eqnarray}
where $\alpha_C^{N_1, N_2}$, $\alpha_{W,1}^{N_1, N_2}$, and $\alpha_{W,2}^{N_1, N_2}$ are the learning rates. The learning rates may depend upon $N_1$ and $N_2$. The parameters at step $k$ are $\theta_k = (C^1_k, \ldots, C^{N_2}_k, W^{2,1,1}_k \ldots, W^{2,N_1,N_2}_k, W^{1,1}_k,\ldots W^{1,N_1}_k)$. $(x_k, y_k)$ are samples of the random variables $(X,Y)$.

The goal of this paper is to characterize the limit of an appropriate rescaling of the multi-layer neural network output $g^{N_1, N_2}_{\theta_k}(x)$ as both the number of hidden units $(N_1,N_2)$ and the stochastic gradient descent iterates $k$ become large. This is the topic of Theorem \ref{Theorem1}. The idea is to first take $N_1\rightarrow\infty$ with $N_2$ fixed. In Lemma \ref{LemmaFirstLayermaintext}, we prove that the empirical measure of the parameters converges to a limit measure as $N_1\rightarrow\infty$ (with $N_2$ fixed) which satisfies a measure evolution equation. This naturally implies a limit for the neural network output $g^{N_1, N_2}$ as $N_1\rightarrow\infty$. The next step is to take $N_2\rightarrow\infty$. Theorem \ref{Theorem1} proves that the limiting distribution can be represented via a system of ODEs.

As previously discussed, related limiting results for the single-layer neural network case have been investigated in \cite{Chizat2018,Szpruch2019,Montanari,Rotskoff_VandenEijnden2018,NeuralNetworkLLN,NeuralNetworkCLT}. In those papers, it is proven that as the number of hidden units and stochastic gradient descent steps, in the appropriate scaling, diverge to infinity,  the empirical distribution of the neural network parameters converges to the weak solution of a non-local PDE. This non-local PDE turns out to be a gradient flow for the limiting objective function in the space of probability measures endowed with the Wassenstein metric (this result is analogous to our Theorem \ref{T:GlobalConvergenceTheorem}). More results with non-asymptotic bounds, again for the single-layer neural network, can be found in \cite{Montanari2}.

Let us also mention that after our current paper appeared as a preprint on arXiv, \cite{Araujo2019} studied the limit of a multi-layer neural network, but under some important differences as compared to our paper. \cite{Nguyen2019} also derives asymptotics for multi-layer neural networks. In \cite{Araujo2019}, the weights in the first and last hidden layers are held fixed throughout training, and the number of hidden units in the rest of the layers is sent to infinity. In our paper, we train all parameters in all layers of the neural network, which introduces additional technical challenges. There is also the related papers \cite{Chizat2018a,NTK}, whose authors look again at the limit as the units per layer go to infinity, but under a $\frac{1}{\sqrt{N}}$ normalization instead of our $\frac{1}{N}$ normalization. In the $\frac{1}{\sqrt{N}}$ normalization, a completely different limit equation will appear. The $\frac{1}{\sqrt{N}}$ case results in a perturbation around the network's randomized initialization, leading to a kernel-type regression limit.

We address the multi-layer neural network case in the mean field scaling of $\frac{1}{N}$. In particular, we study the behavior of the neural network's output (a) as the number of hidden units in each layer go to infinity one by one, and (b) as the number of stochastic gradient descent iterates (i.e., during training of the network) goes to infinity at the speed of the number of the hidden units of the first layer. In this asymptotic regime, we are able to obtain a well-defined limit in Theorem \ref{Theorem1}. Consequences of this result along with a simulation study are presented in Sections \ref{S:GlobalConvergence} and \ref{S:Consequences}.

The rest of the paper is organized as follows. Our main result, which characterizes the asymptotic behavior of a neural network with two hidden layers when the number of hidden units becomes large, is presented in Section \ref{ResultsIntro}. The result can be easily extended to an arbitrary number of hidden layers. Section \ref{S:GlobalConvergence} is devoted to the global convergence arguments. In particular, we show that under the proper assumption the limiting problem derived in Section \ref{ResultsIntro} seeks to minimize the limiting objective function, recovering the global minimum. Section \ref{S:Consequences} discusses further the theoretical results, includes a numerical study to showcase some of the theoretical implications, and, as an example, presents the limit for a three-layer neural network. The proof of the convergence theorem is in Section \ref{ProofSecondLayer}. The uniqueness  of a solution to the limiting system is established in Section \ref{S:PropertiesLimitingSystem}. The proof of the limit of the first layer (i.e., the proof of Lemma \ref{LemmaFirstLayermaintext}) and a few other results are  provided in the Appendix. Section \ref{S:Conclusions} has concluding remarks and possible directions for future work.

\section{Main Results} \label{ResultsIntro}

Let us start by presenting our assumptions, which will hold throughout the paper. We shall work on a filtered probability space $(\Omega,\mathcal{F},\mathbb{P})$  on which all the random variables are defined. The probability space is equipped with a filtration $\mathfrak{F}_t$ that is right continuous and $\mathfrak{F}_0$ contains all $\mathbb{P}$-negligible sets.

\begin{assumption}\label{A:Assumption1}
We assume the following conditions throughout the paper.
  \begin{itemize}
\item $\sigma(\cdot) \in C^{2}_b$, i.e., it is twice continuously differentiable and bounded.
\item The distribution $\pi(dx,dy)$ has compact support, i.e. the data $(x_k, y_k)$ takes values in the compact set $\mathcal{X} \times \mathcal{Y}$.
\item The random initialization of the parameters, i.e. $\{C^i_{\circ}\}_{i}, \{W^{2,i,j}_{\circ}\}_{i,j} , \{W_{\circ}^{1,j}\}_{j}$, are i.i.d. and take values in compact sets $\mathcal{C}, \mathcal{W}^1$, and $\mathcal{W}^{2}$.
\item The probability distributions of the initial parameters $( C^{i}_{\circ}, W^{2,i,j}_{\circ} , W^{1,j}_{\circ} )_{i,j}$ admit continuous probability density functions.
\end{itemize}
\end{assumption}

We denote by $\mu_c(dc)$, $\mu_{W^2}(du)$, and $\mu_{W^{1}}(dw)$  the probability distributions of $\{C^i_{\circ}\}_{i}$, $\{W^{2,i,j}_{\circ}\}_{i,j}$, and $\{W_{\circ}^{1,j}\}_{j}$ respectively.

For reasons that will become clearer later on, we shall choose the learning rates to be
\begin{align}
\alpha_C^{N_1, N_2} &= \frac{N_2}{N_1},\quad \alpha_{W,1}^{N_1, N_2} = 1 \text{ and }\alpha_{W,2}^{N_1,N_2} = N_2.\label{Eq:LearningRate}
\end{align}

Note that the weights in the second layer are trained faster than the other parameters. This choice of learning rates is necessary for convergence to a non-trivial limit as $N_1, N_2 \rightarrow \infty$. If the parameters in all the layers are trained with the same learning rate, it can be mathematically shown that the network will not train as $N_1, N_2$ become large. We further explore this interesting fact in Section \ref{SS:Disucssion}.

Define the empirical measure
\begin{eqnarray}
\tilde \gamma_k^{N_1, N_2} \vcentcolon= \frac{1}{N_1} \sum_{j=1}^{N_1} \delta_{ W_k^{1,j}, W^{2,1,j}_{k}, \ldots, W^{2,N_2, j}_{k}, C_k^1, \ldots, C_k^{N_2}}.\label{Eq:EmpiricalMeasureMultilayer}
\end{eqnarray}

If $f$ is an appropriate test function on some space $X$ and $\gamma$ is a finite measure on $X$, then we denote the inner product $\la f, \gamma \ra=\int_{X}f(x)\gamma (dx)$. Using this inner product, the neural network's output can be re-written in terms of the empirical measure:
\begin{eqnarray}
g_{\theta_k}^{N_1, N_2}(x)  = \frac{1}{N_2}  \sum_{i=1}^{N_2} \la c_i, \tilde  \gamma^{N_1, N_2}_k \ra  \sigma \bigg{(}  \la w^{2,i} \sigma(w^{1} \cdot x) , \tilde \gamma^{N_1, N_2}_k \ra \bigg{)}.\nonumber
\end{eqnarray}

Let us next define the time-scaled empirical measure
\begin{eqnarray}
\gamma_t^{N_1, N_2} \vcentcolon= \tilde \gamma_{\floor*{N_{1} t} }^{N_1, N_2},\nonumber
\end{eqnarray}

and the corresponding time-scaled neural network output is
\begin{eqnarray}
g_{t}^{N_1, N_2}(x) \vcentcolon = g_{\theta_{ \floor*{N_{1} t} }}^{N_1, N_2}(x) = \frac{1}{N_2}  \sum_{i=1}^{N_2} \la c_i,  \gamma^{N_1, N_2}_t \ra  \sigma \bigg{(}  \la w^{2,i} \sigma(w^{1} \cdot x) ,  \gamma^{N_1, N_2}_t \ra \bigg{)}.\nonumber
\end{eqnarray}

At any time $t$, $\gamma^{N_1, N_2}_t$ is measure-valued.  The scaled empirical measure $( \gamma^{N_1, N_2}_t)_{ 0 \leq t \leq 1}$ is a random element of
$D_{E}([0,1])$\footnote{$D_S([0,1])$ is the set of maps from $[0,1]$ into $S$ which are right-continuous and which have left-hand limits.} with $E = \mathcal{M}(\mathbb{R}^{d + 2 N_2})$.

We study convergence using iterated limits. We first let $N_1 \rightarrow \infty$ where the number of units in the first layer is $N_1$ and the number of stochastic gradient descent steps is $\floor*{N_1}$.
Then, we let the number of units in the second layer $N_2 \rightarrow \infty$.

We begin by letting the number of hidden units in the first layer $N_1 \rightarrow \infty$.
\begin{lemma} \label{LemmaFirstLayermaintext}
The process $\gamma^{N_1, N_2} \vcentcolon = ( \gamma_t^{N_1, N_2})_{0 \leq t \leq 1}$ converges in distribution to the measure valued process  $\gamma^{N_2}$ that takes values in $D_E([0,1])$ as $N_1 \rightarrow \infty$. For every $f\in C^{2}_{b}(\mathbb{R}^{d + 2 N_2})$, $\gamma^{N_2}$ satisfies the measure evolution equation
\begin{align}
\la f, \gamma^{N_2}_t \ra - \la f, \gamma^{N_2}_0 \ra &= \int_0^t   \int_{\mathcal{X}\times\mathcal{Y}}   \big{(} y -  g_s^{N_2}(x)   \big{)} \la H_s^{N_2}(x) \cdot \nabla_c f, \gamma^{N_2}_{s} \ra \pi(dx,dy)  ds\notag \\
&+ \int_0^t \int_{\mathcal{X}\times\mathcal{Y}}   \big{(} y -  g_s^{N_2}(x) \big{)} \la  \sigma(w^{1} \cdot x) (\sigma'(Z_{s}(x)) \odot c) \cdot \nabla_{w^2} f, \gamma^{N_2}_{s} \ra \pi(dx, dy) ds \notag \\
&+ \int_0^t \int_{\mathcal{X}\times\mathcal{Y}}    \big{(} y -  g_s^{N_2}(x) \big{)} \frac{1}{N_2} \sum_{i=1}^{N_2} \la  c_i \sigma'(Z_s^{i, N_2}(x)) w^{2,i} \sigma'( w^{1} \cdot x) x \cdot \nabla_{w^{1}} f, \gamma^{N_2}_{s} \ra \pi(dx, dy) ds,
\label{LimitSPDELLNmaintext}
\end{align}
where
\begin{align}
Z_s^{i, N_2}(x) &= \la w^{2,i} \sigma(w^{1} \cdot x), \gamma_s^{N_2} \ra, \notag \\
H_s^{i, N_2}(x) &=  \sigma( Z_s^{i, N_2}(x) ), \notag \\
g_s^{N_2}(x) &= \frac{1}{N_2}  \sum_{i=1}^{N_2} \la c_i, \gamma_s^{N_2} \ra H_s^{i, N_2}(x) , \notag \\
\gamma^{N_2}_0(dw^{1}, dw^{2}, dc)  &= \mu_{W^{1}}(dw^{1}) \times \mu_{W^2}(dw^{2,1}) \times \cdots \times \mu_{W^2}( dw^{2,N_2} ) \times \delta_{ C_{\circ}^1}(dc^1) \times \cdots \times   \delta_{ C_{\circ}^{N_2}}(dc^{N_2}).\label{LimitSPDELLNmaintextb}
\end{align}
\end{lemma}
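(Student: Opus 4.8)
The plan is to establish this as a law of large numbers for the empirical measure $\gamma^{N_1,N_2}$ over the first-layer units, following the standard relative-compactness-plus-identification scheme from weak convergence theory (as in Billingsley / Ethier--Kurtz), with $N_2$ held fixed throughout. First I would derive the pre-limit evolution equation: applying a Taylor expansion to $\la f, \tilde\gamma^{N_1,N_2}_{k+1}\ra - \la f, \tilde\gamma^{N_1,N_2}_k\ra$ using the SGD updates (\ref{SystemDeep}) together with the learning rate choice (\ref{Eq:LearningRate}), one gets a leading drift term of order $1/N_1$ per step, a remainder term of order $1/N_1^2$ coming from the second-order Taylor terms, plus the discrete sum of these over $k \le \lfloor N_1 t\rfloor$. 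Crucially, the factor $\alpha_{W,2}^{N_1,N_2}=N_2$ in the $W^{2,i,j}$ update and $\alpha_C^{N_1,N_2}=N_2/N_1$ in the $C^i$ update are exactly what make the $\nabla_{w^2}f$ and $\nabla_c f$ contributions enter at the same $O(1/N_1)$ order as the $\nabla_{w^1}f$ term. After time-rescaling $t = k/N_1$, the discrete sum becomes (up to a vanishing error) the Riemann-sum approximation of the integral on the right side of (\ref{LimitSPDELLNmaintext}). I would also need to control a martingale/fluctuation term: since $(x_k,y_k)$ are i.i.d. samples of $(X,Y)$, replacing the per-step increment by its conditional expectation given $\mathfrak{F}_{k}$ introduces a martingale whose quadratic variation is $O(1/N_1)$, hence negligible in the limit. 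The uniform boundedness of $\sigma, \sigma', \sigma''$ (from $\sigma\in C^2_b$), the compactness of $\mathcal{X}\times\mathcal{Y}$, and the compactness of the initialization supports $\mathcal{C},\mathcal{W}^1,\mathcal{W}^2$ give a priori bounds showing all the weights, the pre-activations $Z^{i}_k$, the outputs $g^{N_1,N_2}_{\theta_k}$, and hence all the integrands stay in a fixed compact set uniformly in $k\le\lfloor N_1 t\rfloor$ and $N_1$ — this is what makes the remainder and martingale estimates go through.

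Next I would prove relative compactness of $\{\gamma^{N_1,N_2}\}_{N_1}$ in $D_E([0,1])$. Using the criterion for measure-valued processes, it suffices to check that $\{\la f, \gamma^{N_1,N_2}_\cdot\ra\}_{N_1}$ is relatively compact in $D_{\mathbb{R}}([0,1])$ for each $f$ in a suitable separating/convergence-determining class of $C^2_b$ test functions, and that a compact-containment condition holds. Compact containment is immediate here because every atom of $\gamma^{N_1,N_2}_t$ lives in a fixed compact subset of $\mathbb{R}^{d+2N_2}$ (again by the a priori bounds on the weights over the training horizon). The relative compactness of $\la f,\gamma^{N_1,N_2}_\cdot\ra$ follows from the pre-limit equation: the drift term is Lipschitz-in-time with a deterministic constant (bounded integrand times the time increment), and the martingale and remainder terms vanish, so one gets the regularity estimate needed for Aldous's tightness criterion (or a direct modulus-of-continuity estimate). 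Then, along any convergent subsequence with limit $\gamma^{N_2}$, I would pass to the limit in the pre-limit evolution equation: the map $\gamma_\cdot \mapsto$ (right-hand side of (\ref{LimitSPDELLNmaintext})) is continuous in the appropriate topology because $g^{N_2}_s(x)$, $Z^{i,N_2}_s(x)$, and $H^{i,N_2}_s(x)$ are continuous functionals of the measure (they are inner products against bounded continuous functions, composed with the bounded $C^2$ function $\sigma$), so $\la f,\gamma^{N_2}_t\ra - \la f,\gamma^{N_2}_0\ra$ equals the claimed integral. The initial condition (\ref{LimitSPDELLNmaintextb}) follows from the i.i.d. initialization: $\tilde\gamma_0^{N_1,N_2}$ converges by the usual Glivenko--Cantelli / LLN for empirical measures — the first-layer weights $W^{1,j}_\circ$ are i.i.d. so their empirical marginal converges to $\mu_{W^1}$, while for each fixed $i\le N_2$ the $W^{2,i,j}_\circ$ over $j$ are i.i.d. giving $\mu_{W^2}$, and the $C^i_\circ$ coordinates are the same across all atoms so they contribute the Dirac masses $\delta_{C^i_\circ}$.

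Finally, to conclude convergence in distribution (not just subsequential limits) I would invoke uniqueness of the solution to the limiting evolution equation (\ref{LimitSPDELLNmaintext})--(\ref{LimitSPDELLNmaintextb}); this is the analogue, at the level of the first layer, of the uniqueness result the paper defers to Section \ref{S:PropertiesLimitingSystem}, and can be proved by a Gronwall argument exploiting that the right-hand side is Lipschitz in $\gamma$ with respect to a bounded-Lipschitz metric, again because all nonlinearities are bounded with bounded derivatives. Every limit point satisfies the same equation with the same initial condition, hence they coincide, so the full sequence converges.

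I expect the main obstacle to be the bookkeeping in the pre-limit Taylor expansion: one must carefully track which terms in the expansion of $\la f,\tilde\gamma_{k+1}\ra-\la f,\tilde\gamma_k\ra$ are $O(1/N_1)$ (the three drift terms), which are $O(1/N_1^2)$ (second-order Taylor remainder, which after summing over $O(N_1)$ steps contributes $O(1/N_1)\to 0$), and verify that the seemingly dangerous $\nabla_{w^2}f$ and $\nabla_c f$ contributions — which come with the large learning rates $\alpha_{W,2}^{N_1,N_2}=N_2$ and $\alpha_C^{N_1,N_2}=N_2/N_1$ — land at exactly the right order; mismatched scaling here is precisely why the authors remark that training all layers at the same rate gives a trivial limit. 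A secondary technical point is justifying that the compact-containment / a priori bounds on the weights are genuinely uniform over the full time horizon $k\le\lfloor N_1 t\rfloor\le N_1$, which requires a discrete Gronwall estimate on the parameter magnitudes using the boundedness of $\sigma$ and its derivatives and the compact data support.
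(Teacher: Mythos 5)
Your proposal is correct and follows essentially the same route as the paper's proof in Appendix \ref{ProofFirstLayer}: discrete Gronwall a priori bounds giving compact containment, a modulus-of-continuity estimate for tightness in $D_E([0,1])$, a Taylor-expansion-based pre-limit evolution equation with a martingale term vanishing in $L^2$, identification of subsequential limits with the evolution equation, and uniqueness to upgrade to full convergence via Prokhorov. The only cosmetic difference is that the paper establishes uniqueness by representing the limit as the law of the particle system (\ref{SystemLimitLayer1}) and running a fixed-point argument there, rather than a Gronwall bound directly on the measure equation in a bounded-Lipschitz metric, but these amount to the same estimate.
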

\begin{proof}
The proof of this lemma is related to the limit in the first layer as the number of hidden units in the first layer grows with the number of hidden units in the second layer is held fixed. The proof is analogous to the proof in \cite{NeuralNetworkLLN} and the details are presented for completeness in the Appendix \ref{ProofFirstLayer}.
\end{proof}

Lemma \ref{LemmaFirstLayermaintext} studies the limit of the empirical measure $\gamma_{t}^{N_1,N_2}$ as $N_1\rightarrow \infty$ with $N_2$ fixed. The limit is characterized by the stochastic evolution equation (\ref{LimitSPDELLNmaintext})-(\ref{LimitSPDELLNmaintextb}). Notice that Lemma \ref{LemmaFirstLayermaintext} immediately implies that
\[
\lim_{N_1\rightarrow\infty} g_{t}^{N_1, N_2}(x) = g_t^{N_2}(x),
\]
in probability, as $N_1 \rightarrow \infty$

The next step is to study the limit as $N_2\rightarrow \infty$. To do so, we study the limit of the  random ODE as $N_2\rightarrow\infty$ whose law is characterized by (\ref{LimitSPDELLNmaintext})-(\ref{LimitSPDELLNmaintextb}). Our main goal is the characterization of the limit neural network output $g_{t}^{N_1, N_2}(x)$. The following convergence result characterizes the neural network output $g_{t}^{N_1, N_2}(x)$ for large $N_1$ and $N_2$.

\begin{theorem} \label{Theorem1}
For any $t \in [0,1]$ and $x \in \mathcal{X}$,
\begin{eqnarray}
&\phantom{.}& \lim_{N_2 \rightarrow \infty} \lim_{N_1 \rightarrow \infty} g_{t}^{N_1, N_2}(x)   = g_t(x), \notag
\end{eqnarray}
in probability\footnote{$\displaystyle \lim_{N_2 \rightarrow \infty} \lim_{N_1 \rightarrow \infty}  X^{N_1, N_2} = X$ in probability if, for all $\epsilon > 0$, $\displaystyle \lim_{N_2 \rightarrow \infty} \lim_{N_1 \rightarrow \infty} \mathbb{P} \bigg{[} \norm{X^{N_1, N_2} - X }  > \epsilon \bigg{]} = 0$.}, where we have that
\begin{align}
g_t(x) &= \int_{\mathcal{C}}  \tilde C_t^{c} \tilde H_t^{2,c}(x) \mu_c(dc), \label{Eq:LLN_DNNTheoremIntro0}
\end{align}
with
\begin{eqnarray}
d \tilde C_t^{c} &=&  \int_{\mathcal{X} \times \mathcal{Y} } \big{(} y -  g_t(x)  \big{)} \tilde H_t^{2,c}(x) \pi(dx,dy)  dt, \phantom{....} %\notag \\
\tilde C_0^{c} = c, \notag \\
 d \tilde W_t^{1,w} &=&  \int_{\mathcal{X} \times \mathcal{Y} } \big{(} y - g_t(x)  \big{)} V_t^{w}(x) \sigma'( \tilde W_t^{1,w} \cdot x) x \pi(dx,dy)   dt, \phantom{....}
 \tilde W_0^{1,w} = w, \notag \\
d \tilde W^{2,c,w,u}_{t} &=&  \int_{\mathcal{X} \times \mathcal{Y} } \big{(} y - g_t(x) \big{)} \tilde C_t^{c} \sigma'(\tilde Z_t^{c}(x) )  \tilde H_t^{1,w}(x) \pi(dx,dy)  dt,  \phantom{......} \tilde W^{2,c,w,u}_0 = u,  \notag \\
\tilde H_t^{1,w}(x) &=& \sigma ( \tilde W_t^{1,w}\cdot  x), \notag \\
%\tilde Z_t^{c}(x) &=& \int_{\mathcal{W}^{1}}  V_t^{c,w}  \tilde H_t^{1,w}(x)  \mu_w(dw) , \notag \\
\tilde Z_t^{c}(x) &=&  \int_{\mathcal{W}^{1}} \int_{\mathcal{W}^2}   \tilde  W^{2,c,w,u }_t  \tilde H^{1,w}_t(x)  \mu_{W^2}(du) \mu_{W^{1}}(dw) , \notag \\
\tilde H_t^{2,c}(x) &=& \sigma( \tilde Z_t^{c}(x) ), \notag \\
V_t^{w}(x) &=& \int_{\mathcal{C}}  \tilde C_t^c  \sigma'(\tilde Z_t^{c}(x) ) \left( \int_{\mathcal{W}^2}  \tilde W^{2,c,w,u}_t   \mu_{W^2}(du) \right) \mu_c(dc). \label{Eq:LLN_DNNTheoremIntro}
\end{eqnarray}

The system in (\ref{Eq:LLN_DNNTheoremIntro}) has a unique solution. In addition, letting $g_t^{N_2}(x)$ defined through Lemma \ref{LemmaFirstLayermaintext} we have the following rate of convergence
\begin{eqnarray}
\sup_{x \in \mathcal{X}} \mathbb{E} \bigg{[} | g_t^{N_2}(x) - g_t(x) | \bigg{]} \leq K N_2^{-1/2},\nonumber
\end{eqnarray}
for some constant $K<\infty$.
\end{theorem}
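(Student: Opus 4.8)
The plan is to run a propagation-of-chaos argument in $N_2$ on the random system characterized by Lemma~\ref{LemmaFirstLayermaintext}, coupling the finite-$N_2$ dynamics to an idealized system of i.i.d.\ particles and closing a Gr\"onwall estimate. First I would recast the evolution equation \eqref{LimitSPDELLNmaintext}: since the first and last layers' learning has no diffusive noise in the $N_1\to\infty$ limit, $\gamma^{N_2}_t$ is (conditionally on the initialization) a deterministic flow, and testing \eqref{LimitSPDELLNmaintext} against coordinate functions shows that the particle trajectories $(W^{1,j,N_2}_t, (W^{2,i,j,N_2}_t)_{i\le N_2}, (C^{i,N_2}_t)_{i\le N_2})$ solve a closed ODE system in which the coupling among particles enters only through the empirical averages $Z^{i,N_2}_s(x)=\tfrac1{N_1}\sum_j W^{2,i,j}_s\sigma(W^{1,j}_s\cdot x)$ — but after the $N_1$ limit this is already an integral against $\gamma^{N_2}_s$, so effectively the $j$-index has been integrated out and we are left with finitely many ($N_2$) "$i$-particles" $C^{i,N_2}_t$ plus a continuum of $w$-weights. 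Concretely I would introduce, on the same probability space, the limiting objects $\tilde C^c_t, \tilde W^{1,w}_t, \tilde W^{2,c,w,u}_t$ of \eqref{Eq:LLN_DNNTheoremIntro} driven by the \emph{same} random initializations $C^i_\circ, W^{1,j}_\circ, W^{2,i,j}_\circ$, and compare $C^{i,N_2}_t$ with $\tilde C^{C^i_\circ}_t$, $W^{1,j,N_2}_t$ with $\tilde W^{1,W^{1,j}_\circ}_t$, and the relevant functionals $Z^{i,N_2}_s(x)$ with $\tilde Z^{C^i_\circ}_s(x)$, $g^{N_2}_s(x)$ with $g_s(x)$.

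The core estimate is a Gr\"onwall bound on
$\Delta^{N_2}_t \vcentcolon= \sup_{x}\Big( \max_{i} \mathbb{E}|C^{i,N_2}_t - \tilde C^{C^i_\circ}_t| + \max_j \mathbb{E}\|W^{1,j,N_2}_t - \tilde W^{1,W^{1,j}_\circ}_t\| + \max_{i,j}\mathbb{E}|W^{2,i,j,N_2}_t - \tilde W^{2,C^i_\circ,W^{1,j}_\circ,W^{2,i,j}_\circ}_t| + \mathbb{E}|g^{N_2}_t(x)-g_t(x)|\Big)$.
Using Assumption~\ref{A:Assumption1} ($\sigma\in C^2_b$, compact supports, bounded parameters propagated by the dynamics — which I'd first verify via an a priori boundedness lemma, following \cite{NeuralNetworkLLN}), all the vector fields in \eqref{LimitSPDELLNmaintext} and \eqref{Eq:LLN_DNNTheoremIntro} are Lipschitz in the state variables with constants uniform in $N_2$, $x$, $t\in[0,1]$. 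Writing each difference as a time integral of (difference of drifts) and adding/subtracting cross terms, every term is bounded by $\int_0^t C\,\Delta^{N_2}_s\,ds$ plus a single genuinely stochastic term: the gap between the empirical average $\tfrac1{N_2}\sum_{i=1}^{N_2} F(C^i_\circ,\dots)$ appearing in $Z^{i,N_2}$, $g^{N_2}$, and $V^w$ and its expectation $\int_{\mathcal C} F\,\mu_c(dc)$. That term is handled by a law-of-large-numbers/variance bound: since the summands are, at $t=0$, i.i.d.\ and bounded, and the map from initialization to trajectory is Lipschitz uniformly in $N_2$, one gets $\mathbb{E}\big|\tfrac1{N_2}\sum_{i} F_i - \mathbb{E}F\big| \le \big(\operatorname{Var}(\tfrac1{N_2}\sum_i F_i)\big)^{1/2} \le K N_2^{-1/2}$ — the slight subtlety is that the $F_i$ at time $t>0$ are only \emph{approximately} independent (they all feel the same $g^{N_2}_s$), so I would either (i) bound the true trajectories' average by the decoupled i.i.d.\ system's average plus $\Delta^{N_2}_s$, absorbing the latter into the Gr\"onwall term, or (ii) use an exchangeability-based variance estimate. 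Gr\"onwall then yields $\Delta^{N_2}_t \le K N_2^{-1/2}$ uniformly on $[0,1]$, which is exactly the asserted rate $\sup_x \mathbb{E}|g^{N_2}_t(x)-g_t(x)|\le KN_2^{-1/2}$; combined with Lemma~\ref{LemmaFirstLayermaintext}'s $N_1\to\infty$ convergence this gives the iterated-limit statement $\lim_{N_2}\lim_{N_1} g^{N_1,N_2}_t(x)=g_t(x)$ in probability (convergence in $L^1$ implies convergence in probability).

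For uniqueness of the solution to \eqref{Eq:LLN_DNNTheoremIntro}, I would note that the system is a fixed point: given the path $t\mapsto g_t(\cdot)$, the ODEs for $\tilde C^c, \tilde W^{1,w}, \tilde W^{2,c,w,u}$ have unique solutions (locally Lipschitz RHS on the invariant compact set), and then $g_t$ is recovered by \eqref{Eq:LLN_DNNTheoremIntro0}; running the same Gr\"onwall argument on the difference of two putative solutions with the \emph{same} initialization and no $N_2^{-1/2}$ remainder forces them to coincide. I expect the main obstacle to be the a priori uniform-in-$N_2$ boundedness and Lipschitz bounds — in particular controlling the $\tfrac1{N_2}\sum_i$ terms inside $Z^{i,N_2}$ and $V^w$ when these quantities themselves appear inside $\sigma'$ and are multiplied by the other weights — and organizing the many cross terms so that the only non-Gr\"onwall contribution is the clean $O(N_2^{-1/2})$ LLN fluctuation; the near-independence (rather than exact independence) of the $i$-indexed trajectories is the delicate point requiring care, exactly as in the single-layer analysis of \cite{NeuralNetworkLLN} but now with the first-layer continuum already integrated out.
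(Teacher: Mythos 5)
Your proposal is correct and follows essentially the same route as the paper: re-express the $N_1\to\infty$ limit as a particle system indexed by the (i.i.d.) initializations, synchronously couple it to the limit system \eqref{Eq:LLN_DNNTheoremIntro} driven by the same initializations, decompose each drift difference into Lipschitz terms (absorbed by Gr\"onwall) plus an empirical-average-minus-integral fluctuation whose variance is $O(N_2^{-1})$ because the decoupled tilde-trajectories are deterministic functions of i.i.d.\ initial data, and close with Gr\"onwall to get the $N_2^{-1/2}$ rate; uniqueness is likewise a Gr\"onwall argument on the difference of two solutions. The only cosmetic difference is that the paper works with a squared error functional averaged over $i$ (bootstrapped to a per-$i$ bound) rather than your $L^1$/max-over-indices quantity, and your option (i) for handling the near-independence is exactly the paper's add-and-subtract of the decoupled system.
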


Notice that we can also write that $g_t(x)$ satisfies
\begin{align}
g_t(x) &= \int_{\mathcal{C}}  \tilde C_t^{c} \sigma\left( \int_{\mathcal{W}^{1}} \int_{\mathcal{W}^2}   \tilde  W^{2,c,w,u }_t  \sigma ( \tilde W_t^{1,w}\cdot  x)  \mu_{W^2}(du) \mu_{W^{1}}(dw) \right) \mu_c(dc).\label{Eq:CompressedNNoutput}
\end{align}

The proof of Theorem \ref{Theorem1} is given in Section \ref{ProofSecondLayer}. Theorem \ref{Theorem1} indicates that the neural network output in the large hidden units and large SGD-iterates limit depends on paths of representative weights that connect the input layer to the output layer. Even though, we restricted the statement of Theorem \ref{Theorem1} in the interval $t\in[0,1]$, its proof makes it clear that the statement is true for $t\in[0,T]$ for any $0<T<\infty$. In addition, even though this does not mean that we can take $T=\infty$, we can still examine what happens to the limit problem as $t$ grows.  In particular, in Section \ref{S:GlobalConvergence} we show that under the proper assumptions, one does expect to recover the global minimum as $t\rightarrow\infty$.

Section \ref{S:Consequences} discusses some further consequences of Theorem \ref{Theorem1} as well as challenges that come up in the study of the limiting behavior of multi-layer neural networks as the number of the hidden units grows.

\section{On global convergence}\label{S:GlobalConvergence}
The goal of this section is to demonstrate that, under appropriate assumptions, it can be expected that the global minimum is recovered as $t\rightarrow\infty$. Let the target data $y$ be a function $f(x)$ that we seek to learn, i.e. $y = f(x)$.

\begin{assumption} \label{A:AssumptionJS}
The activation function $\sigma(\cdot)$ is real analytic, bounded, and $\sigma'(\cdot) > 0$. Let $\mathcal{X} = \{1 \} \times D$ where $D \subset \mathbb{R}^{d-1}$, which is equivalent to including bias weights in the first layer of the neural network. $\pi$ is positive on sets in $D$ with positive Lebesgue measure and its first marginal is a Dirac measure at $x=1$.

\end{assumption}

Notice that by \cite{Hornik2}, the fact that $\sigma$ is bounded and non-constant by Assumption \ref{A:AssumptionJS} implies that $\sigma$ is also discriminatory in the sense of \cite{Cybenk1989,Hornik2}. Namely, if we have that
\[
\int_{\mathcal{X}}h(x)\sigma(w\cdot x)\pi(dx)=0 \text{ for all } w \in \mathbb{R}^d,
\]
then $h(x)=0$ for $x \in \mathcal{X}$ .
%then $h(x)=0$ for $x\in\text{supp}(\pi)\cap\mathcal{X}$ .}

\begin{remark}\label{R:ExampleActivationFcn}
An example of a real analytic, bounded activation function where $\sigma'(\cdot) > 0$ (and therefore discriminatory too), i.e. that satisfies Assumption  \ref{A:AssumptionJS}, is the sigmoid function $\sigma(z) = \frac{e^z}{1+ e^z}$.
\end{remark}

Theorem 2 of \cite{Funakashi1989} shows that for activation functions $\sigma$ that are non-constant, bounded and monotone (in which case Assumption  \ref{A:AssumptionJS} holds), then two layer neural networks, such as $g_{\theta}^{N_1, N_2}(x)$, (and more generally multilayer neural networks) are dense in $\mathcal{C}(\mathcal{X})$. This result basically implies that for large enough $N_1,N_2$ functions of the form $g_{\theta}^{N_1, N_2}(x)$ approximate to arbitrary accuracy functions in $\mathcal{C}(\mathcal{X})$. Training the weights with stochastic grading descent and sending $N_1,N_2$ to infinity, leads then, by Theorem \ref{Theorem1}, to function approximator $g_{t}(x)$, which is given through a system of integro-differential equations of the form
\begin{align}
d \tilde C_t^{c} &=  \int_{\mathcal{X} } \big{(} f(x) -  g_t(x)  \big{)} \tilde H_t^{2,c}(x) \pi(dx)  dt, \phantom{....} %\notag \\
\tilde C_0^{c} = c, \notag \\
 d \tilde W_t^{1,w} &=  \int_{\mathcal{X} } \big{(} f(x) - g_t(x)  \big{)} V_t^{w} \sigma'( \tilde W_t^{1,w} \cdot x) x \pi(dx)   dt, \phantom{....}
 \tilde W_0^{1,w} = w, \notag \\
d \tilde W^{2,c,w,u}_{t} &=  \int_{\mathcal{X}  } \big{(} f(x) - g_t(x) \big{)} \tilde C_t^{c} \sigma'(\tilde Z_t^{c} )  \tilde H_t^{1,w}(x) \pi(dx)  dt,  \phantom{......} \tilde W^{2,c,w,u}_0 = u,  \notag \\
\tilde H_t^{1,w}(x) &= \sigma ( \tilde W_t^{1,w}\cdot  x), \notag \\
%\tilde Z_t^{c}(x) &=& \int_{\mathcal{W}^{1}}  V_t^{c,w}  \tilde H_t^{1,w}(x)  \mu_w(dw) , \notag \\
\tilde Z_t^{c}(x) &=  \int_{\mathcal{W}^{1}} \int_{\mathcal{W}^2}   \tilde  W^{2,c,w,u }_t  \tilde H^{1,w}_t(x)  \mu_{W^2}(du) \mu_{W^{1}}(dw) , \notag \\
\tilde H_t^{2,c}(x) &= \sigma( \tilde Z_t^{c}(x) ), \notag \\
V_t^{w}(x) &= \int_{\mathcal{C}}  \tilde C_t^c  \sigma'(\tilde Z_t^{c}(x) ) \left( \int_{\mathcal{W}^2}  \tilde W^{2,c,w,u}_t   \mu_{W^2}(du) \right) \mu_c(dc),\label{Eq:LLN_DNNTheoremIntro00}
\end{align}
where the neural network prediction is
\begin{align}
g_t(x) &= \int_{\mathcal{C}}  \tilde C_t^{c} \tilde H_t^{2,c}(x) \mu_c(dc), \label{Eq:LLN_DNNTheoremIntro000}
\end{align}

We establish in this section that under reasonable assumptions $f(x)$ can indeed be recovered  by $g_{t}(x)$ as $t\rightarrow\infty$.

Let us denote $\Theta_{t}(c,w,u)=(\tilde{C}^{c}_{t},\tilde{W}^{1,w}_{t},\tilde{W}^{2,c,w,u}_{t})$ for the components of the ODE in (\ref{Eq:LLN_DNNTheoremIntro00}). For notational convenience, we shall often write $\theta=(c,w,u)$. Then, we obviously have that $g_{t}(x)$ depends on $t$ only through $[\Theta_t]=\left\{\Theta_{t}(\theta)\right\}_{\theta\in \mathcal{C}\times\mathcal{W}^{1}\times\mathcal{W}^{2}}$. In order to emphasize that we shall write $g_{t}(x)=g(x;[\Theta_{t}])$.

Analogously, we will denote $[\Theta_t(c,\cdot)]=\left\{\Theta_{t}(c,w,u)\right\}_{(w,u)\in \mathcal{W}^{1}\times\mathcal{W}^{2}}$ and
$[\Theta_t(\cdot,w,\cdot)]=\left\{\Theta_{t}(c,w,u)\right\}_{(c,u)\in \mathcal{C}\times\mathcal{W}^{2}}$, which then leads to the notation $\tilde{Z}^{c}_{t}(x)=\tilde{Z}(x;[\Theta_{t}(c,\cdot,\cdot)])$ and $V^{w}_{t}(x)=V(x;[\Theta_{t}(\cdot,w,\cdot)])$. For notational convenience let us also denote $h(x;[\Theta_{t}])=(f(x)-g_{t}(x))$.

With these definitions in place, and for $(c,w)\in\mathcal{C}\times\mathcal{W}^{1}$, let us also define
\begin{align}
R_{1}([\Theta_t],c)&\equiv \int_{\mathcal{X}}   h(x;[\Theta_t])     \sigma(\tilde{Z}(x;[\Theta_t(c,\cdot)]))\pi(dx)\nonumber\\
R_{2}([\Theta_t],\tilde W^{1,w}_t, w)&\equiv \int_{\mathcal{X}} h(x;[\Theta_t]) V(x;[\Theta_t(\cdot,w,\cdot)]) \sigma'( \tilde W^{1,w}_t \cdot x)  x  \pi(dx)\nonumber\\
R_{3}([\Theta_t],\tilde C^{c}_t,\tilde W^{1,w}_t, c)&\equiv \int_{\mathcal{X}} h(x;[\Theta_t])    \tilde C^{c}_t\sigma'(\tilde{Z}(x;[\Theta_t(c,\cdot)])))     \sigma ( \tilde W^{1,w}_t\cdot  x)\pi(dx)\label{Eq:ODEdefinition1}
\end{align}

and let us set
\[
H([\Theta_t],\Theta_t(\theta), \theta)= (R_{1}([\Theta_t],c), R_{2}([\Theta_t],\tilde W^{1,w}_t, w),R_{3}([\Theta_t],\tilde C^{c}_t,\tilde W^{1,w}_t, c)).
\]

The purpose of the notation above is to make it clear that the functions $H(\cdot)$ depends on $[\Theta_t]$, on $\Theta_t(\theta)$ and on  $\theta$ separately. With these identifications we can write that the ODE system in (\ref{Eq:LLN_DNNTheoremIntro00}) can be written in the form
\begin{align}
\dot{\Theta}_{t}(\Theta_{0})&= H([\Theta_t],\Theta_t(\Theta_{0}), \Theta_{0}), \textrm{ such that } \Theta_{0}=(c,w,u). \label{Eq:ODEdefinition2}
\end{align}

Next we investigate the limiting objective function
\begin{align}
\bar{L}(\Theta_{t})&=\lim_{N_2\rightarrow\infty}\lim_{N_1\rightarrow\infty}L^{N_{1},N_{2}}(\theta_{\floor*{N_{1}t}}) =\lim_{N_2\rightarrow\infty}\lim_{N_1\rightarrow\infty}\frac{1}{2} \mathbb{E}_{X} \bigg{[} ( f(X)-g^{N_1, N_2}_{\theta_{\floor*{N_{1}t}}}(X)  )^2 \bigg{]}\nonumber\\
&= \frac{1}{2} \int_{\mathcal{X}} \bigg{[} ( f(x)- g(x;[\Theta_{t}])  )^2 \bigg{]}\pi(dx)\nonumber
\end{align}
where $g(x;[\Theta_{t}])=g_{t}(x) $ is given by (\ref{Eq:LLN_DNNTheoremIntro000}).

Notice that the function $\Theta\mapsto \bar{L}(\Theta)$ is always non-negative and becomes zero only at a global  minimum. In particular, our goal is to demonstrate that as $t\rightarrow\infty$, there are sufficient conditions that guarantee that the global minimum is realized in the sense that
\[
\lim_{t\rightarrow\infty}g(x;[\Theta_{t}])=f(x), \text{ for almost all }x\in\mathcal{X}.
\]

We notice that $\bar{L}$ acts as a Lyapunov function for the dynamical system (\ref{Eq:ODEdefinition2}) in that
\begin{align*}
\frac{d}{dt}\bar{L}(\Theta_{t})&=\nabla_{\Theta} \bar{L}(\Theta_{t})\cdot \dot{\Theta}_{t}= \nabla_{\Theta} \bar{L}(\Theta_{t})\cdot  H(\Theta_{t})\leq 0
\end{align*}

Indeed, we calculate
\begin{align}
&\frac{d}{dt}\bar{L}(\Theta_{t})=\int_{\mathcal{X}}  ( f(x)- g_{t}(x)  ) \dot{g}_{t}(x)\pi(dx)\nonumber\\
&=\int_{\mathcal{X}}  ( f(x)- g_{t}(x)  ) \left[\int_{\mathcal{C}}  \left(\dot{\tilde C}_t^{c} \tilde H_t^{2,c}(x)+\right.\right.\nonumber\\
&\quad\left.\left.+
 \tilde C_t^{c}
\sigma'(\tilde Z_t^{c}(x) )\left(\int_{\mathcal{W}^{1}} \int_{\mathcal{W}^2}   \left(\dot{\tilde  W}^{2,c,w,u }_t  \tilde H^{1,w}_t(x)+\tilde  W^{2,c,w,u }_t \sigma'( \tilde W_t^{1,w} \cdot x) \dot{\tilde W}_t^{1,w} \cdot x\right)  \mu_{W^2}(du) \mu_{W^{1}}(dw)\right)
\right)    \mu_c(dc)\right]\pi(dx)\nonumber\\
&=\int_{\mathcal{X}}  ( f(x)- g_{t}(x)  ) \int_{\mathcal{C}} \dot{\tilde C}_t^{c} \tilde H_t^{2,c}(x)\mu_c(dc)\pi(dx)\nonumber\\
&\quad+\int_{\mathcal{X}}  ( f(x)- g_{t}(x)  ) \int_{\mathcal{C}} \tilde C_t^{c}
\sigma'(\tilde Z_t^{c}(x) )\int_{\mathcal{W}^{1}} \int_{\mathcal{W}^2}   \tilde  W^{2,c,w,u }_t \sigma'( \tilde W_t^{1,w} \cdot x) \dot{\tilde W}_t^{1,w} \cdot x  \mu_{W^2}(du) \mu_{W^{1}}(dw)    \mu_c(dc)\pi(dx)\nonumber\\
&\quad+\int_{\mathcal{X}}  ( f(x)- g_{t}(x)  ) \int_{\mathcal{C}} \tilde C_t^{c}
\sigma'(\tilde Z_t^{c}(x) )\int_{\mathcal{W}^{1}} \int_{\mathcal{W}^2}   \dot{\tilde  W}^{2,c,w,u }_t  \tilde H^{1,w}_t(x)\mu_{W^2}(du) \mu_{W^{1}}(dw)\mu_c(dc)\pi(dx)\nonumber
\end{align}
\begin{align}
&=\int_{\mathcal{C}} \dot{\tilde C}_t^{c} \left[\int_{\mathcal{X}}  ( f(x)- g_{t}(x)  )   \tilde H_t^{2,c}(x)\pi(dx)\right]\mu_c(dc)\nonumber\\
&\quad+\int_{\mathcal{W}^{1}}  \dot{\tilde W}_t^{1,w} \cdot \left[\int_{\mathcal{X}}  ( f(x)- g_{t}(x)  ) \int_{\mathcal{C}} \tilde C_t^{c}
\sigma'(\tilde Z_t^{c}(x) ) \int_{\mathcal{W}^2}   \tilde  W^{2,c,w,u }_t \sigma'( \tilde W_t^{1,w} \cdot x)  x  \mu_{W^2}(du) \mu_c(dc)\pi(dx)\right]\mu_{W^{1}}(dw)\nonumber\\
&\quad+\int_{\mathcal{C}}\int_{\mathcal{W}^{1}} \int_{\mathcal{W}^2}\dot{\tilde  W}^{2,c,w,u }_t \left[\int_{\mathcal{X}}  ( f(x)- g_{t}(x)  )  \tilde C_t^{c}
\sigma'(\tilde Z_t^{c}(x) )     \tilde H^{1,w}_t(x)\pi(dx)\right]\mu_{W^2}(du) \mu_{W^{1}}(dw)\mu_c(dc)\nonumber\\
&=\int_{\mathcal{C}} \dot{\tilde C}_t^{c} \left[\int_{\mathcal{X}}  ( f(x)- g_{t}(x)  )   \tilde H_t^{2,c}(x)\pi(dx)\right]\mu_c(dc)\nonumber\\
&\quad+\int_{\mathcal{W}^{1}}  \dot{\tilde W}_t^{1,w} \cdot \left[\int_{\mathcal{X}}  ( f(x)- g_{t}(x)  ) V_t^{w}
 \sigma'( \tilde W_t^{1,w} \cdot x)  x  \pi(dx)\right]\mu_{W^{1}}(dw)\nonumber\\
 &\quad+\int_{\mathcal{C}}\int_{\mathcal{W}^{1}} \int_{\mathcal{W}^2}\dot{\tilde  W}^{2,c,w,u }_t \left[\int_{\mathcal{X}}  ( f(x)- g_{t}(x)  )  \tilde C_t^{c}
\sigma'(\tilde Z_t^{c}(x) )     \tilde H^{1,w}_t(x)\pi(dx)\right]\mu_{W^2}(du) \mu_{W^{1}}(dw)\mu_c(dc)\nonumber\\
&=-\int_{\mathcal{C}}  \left(\int_{\mathcal{X}}  ( f(x)- g_{t}(x)  )   \tilde H_t^{2,c}(x)\pi(dx)\right)^{2}\mu_c(dc)\nonumber\\
&\quad-\int_{\mathcal{W}^{1}}   \left|\int_{\mathcal{X}}  ( f(x)- g_{t}(x)  ) V_t^{w}
 \sigma'( \tilde W_t^{1,w} \cdot x)  x  \pi(dx)\right|^{2}\mu_{W^{1}}(dw)\nonumber\\
 &\quad-\int_{\mathcal{C}}\int_{\mathcal{W}^{1}} \int_{\mathcal{W}^2} \left(\int_{\mathcal{X}}  ( f(x)- g_{t}(x)  )  \tilde C_t^{c}
\sigma'(\tilde Z_t^{c}(x) )     \tilde H^{1,w}_t(x)\pi(dx)\right)^{2}\mu_{W^2}(du) \mu_{W^{1}}(dw)\mu_c(dc)\nonumber
\end{align}
\begin{align}
&=-\left[\int_{\mathcal{C}}  \left(R_{1}([\Theta_t],c)\right)^{2}\mu_c(dc)+\int_{\mathcal{W}^{1}}   \left|R_{2}([\Theta_t],\tilde W^{1,w}_t, w)\right|^{2}\mu_{W^{1}}(dw)\right.\nonumber\\
&\qquad\left.+\int_{\mathcal{C}}\int_{\mathcal{W}^{1}} \left(R_{3}([\Theta_t],\tilde C^{c}_t,\tilde W^{1,w}_t, c)\right)^{2}\mu_{W^{1}}(dw)\mu_c(dc)\right]\nonumber\\
%&=-\int_{\mathcal{C}}\int_{\mathcal{W}^{1}}\int_{\mathcal{W}^{2}}\left|H([\Theta_t],\Theta_t(\theta), \theta)\right|^{2}\eta_{0}(d\theta)\nonumber\\
&\leq 0.\label{Eq:LimitLossDerivative}
\end{align}

The fact that $\frac{d}{dt}\bar{L}(\Theta_{t})\leq 0$ from (\ref{Eq:LimitLossDerivative}) means that $\bar{L}(\Theta_t)$ is decreasing in the gradient direction of the paths governing the limiting behavior of the weights.

Let us define $\eta_{0}$ to be the joint measure for the random initialization of the parameters, i.e. for $\{C^i_{\circ}\}_{i}$, $\{W_{\circ}^{1,j}\}_{j}$, $\{W^{2,i,j}_{\circ}\}_{i,j}$. By Assumption \ref{A:Assumption1} we have that this is the product measure $\eta_{0}=\mu_{c}\times\mu_{W^{1}}\times\mu_{W^{2}}$. Let us make a convenient assumption for the support of $\eta_{0}$. In particular,
\begin{assumption}\label{A:SupportInitial}
We have that $\textrm{support}(\eta_{0})=\mathcal{C}\times\mathcal{W}^{1}\times\mathcal{W}^{2}$. \end{assumption}

Notice now that we can write
\begin{align}
\frac{d}{dt}\bar{L}(\Theta_{t})&=
-\int_{\mathcal{C}}\int_{\mathcal{W}^{1}}\int_{\mathcal{W}^{2}}\left|H([\Theta_t],\Theta_t(\theta), \theta)\right|^{2}\eta_{0}(d\theta)\leq 0.\label{Eq:LimitLossDerivative2}
\end{align}

    As a matter of fact, more is true. We investigate how the objective function changes with perturbation from $\Theta$ to $\hat{\Theta}$. In particular, we have
for $\epsilon=(\epsilon_{1},\epsilon_{2},\epsilon_{3})$
\begin{align*}
\frac{\partial}{\partial \epsilon_{i}}\bar{L}(\Theta+\epsilon\odot\hat{\Theta})\Big|_{\epsilon=0}&=\int_{\mathcal{C}\times\mathcal{W}^{1}\times\mathcal{W}^{2}}H_{i}([\Theta],\Theta(\theta),\theta)\hat{\Theta}_{i}(\theta)\eta_{0}(d\theta),
\end{align*}
which, in combination with (\ref{Eq:LimitLossDerivative2}) and Assumption \ref{A:SupportInitial}, then says that  a minimizer for $\bar{L}(\Theta)$, say  $\Theta^{*}(\theta)$, will satisfy for almost all $(c,w)\in\mathcal{C}\times\mathcal{W}^{1}$ the relations
\begin{align}
R_{1}([\Theta^{*}],c)=R_{2}([\Theta^{*}],\tilde W^{1,*,w}, w)=R_{3}([\Theta^{*}],\tilde C^{*,c},\tilde W^{1,*,w}, c)=0.
\label{Eq:Minimizers}
\end{align}

By analogy, let us now define $\eta_{t}$ to be the measure  at time $t$  of the random vector $\Theta_{t}=(\tilde{C}_{t},\tilde{W}^{1}_{t},\tilde{W}^{2}_{t})$ as governed by the solution to the random ODE system (\ref{Eq:LLN_DNNTheoremIntro00}). Then,  $\eta_{t}$ is the pushforward of $\eta_{0}$ under $\Theta_{t}$ given by (\ref{Eq:ODEdefinition2}) (see for example Chapter 8 in \cite{Ambrosio2008}), i.e.,
\begin{align}
\eta_{t}&=\left(\Theta_{t}\right)_\sharp \eta_{0}.\label{Eq:PushforwardMeasure0}
\end{align}

\begin{theorem}\label{T:GlobalConvergenceTheorem}
Let assumptions  \ref{A:AssumptionJS} and \ref{A:SupportInitial} hold. If $\eta_{t}\rightarrow \eta^{*}$, weakly, where $\eta^{*}$ is a non-degenerate measure  that admits a density with finite first moments, then we have that $\eta^{\ast}$ is a global minimum with zero loss.
\end{theorem}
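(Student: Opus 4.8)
The plan is to use $\bar L$ as a Lyapunov function to extract, along a sequence of times, a limiting weight configuration $\Theta^{*}$ along which the velocity field $H$ of \eqref{Eq:ODEdefinition2} vanishes, and then to convert this into the identity $g(\cdot;[\Theta^{*}])=f$ $\pi$-a.e.\ by invoking the real analyticity and strict monotonicity of $\sigma$ (Assumption~\ref{A:AssumptionJS}), the discriminatory property quoted after that assumption, and the non-degeneracy of $\eta^{*}$. \emph{Step 1 (Lyapunov decay).} By \eqref{Eq:LimitLossDerivative2}, $t\mapsto\bar L(\Theta_{t})$ is non-increasing and bounded below by $0$, hence converges to some $\bar L_{\infty}\ge 0$, and integrating \eqref{Eq:LimitLossDerivative2} gives $\int_{0}^{\infty}\int_{\mathcal{C}\times\mathcal{W}^{1}\times\mathcal{W}^{2}}|H([\Theta_{t}],\Theta_{t}(\theta),\theta)|^{2}\eta_{0}(d\theta)\,dt=\bar L(\Theta_{0})-\bar L_{\infty}<\infty$. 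Therefore there is a sequence $t_{n}\uparrow\infty$ along which $\int|H([\Theta_{t_{n}}],\Theta_{t_{n}}(\theta),\theta)|^{2}\eta_{0}(d\theta)\to 0$; by \eqref{Eq:ODEdefinition1} and the product form of $\eta_{0}$ this is equivalent to $\|R_{1}([\Theta_{t_{n}}],\cdot)\|_{L^{2}(\mu_{c})}$, $\|R_{2}([\Theta_{t_{n}}],\tilde W^{1,\cdot}_{t_{n}},\cdot)\|_{L^{2}(\mu_{W^{1}})}$ and $\|R_{3}([\Theta_{t_{n}}],\tilde C^{\cdot}_{t_{n}},\tilde W^{1,\cdot}_{t_{n}},\cdot)\|_{L^{2}(\mu_{c}\times\mu_{W^{1}})}$ all tending to $0$.

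\emph{Step 2 (passing to the limit).} Assumption~\ref{A:Assumption1} (boundedness of $\sigma,\sigma'$, compactness of $\mathcal{X}\times\mathcal{Y}$) gives uniform Lipschitz bounds on $H$ and hence equicontinuity and tightness for the time-shifted trajectory families; combined with $\eta_{t}\Rightarrow\eta^{*}$ and the finiteness of the first moments of $\eta^{*}$ (which supplies the uniform integrability needed to pass integrals through the weak limit), one extracts a limiting field $\Theta^{*}$ with $(\Theta^{*})_{\sharp}\eta_{0}=\eta^{*}$ along which $g_{t_{n}}$, $\tilde Z^{c}_{t_{n}}$, $V^{w}_{t_{n}}$ and the $R_{i}$'s converge and such that $R_{1}([\Theta^{*}],c)=R_{2}([\Theta^{*}],\tilde W^{1,*,w},w)=R_{3}([\Theta^{*}],\tilde C^{*,c},\tilde W^{1,*,w},c)=0$ for $(\mu_{c}\times\mu_{W^{1}})$-a.e.\ $(c,w)$, i.e.\ \eqref{Eq:Minimizers} holds for $\Theta^{*}$; Assumption~\ref{A:SupportInitial} is what makes the vanishing of $\|H\|_{L^{2}(\eta_{0})}$ equivalent to the a.e.\ vanishing of $R_{1},R_{2},R_{3}$ over all of $\mathcal{C}\times\mathcal{W}^{1}\times\mathcal{W}^{2}$. \emph{This step is the main obstacle.} The difficulty is the closure problem discussed in the introduction: $g_{t}$, $\tilde Z^{c}_{t}$ and $V^{w}_{t}$ are not functionals of the marginal $\eta_{t}$ alone but of the joint law of the initialization and of its flow, so one cannot push the $R_{i}$'s directly through a weak limit of $\eta_{t}$. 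A practical route is to carry along the enriched measure $\mathrm{Law}(\theta,\Theta_{t}(\theta))$, whose first marginal is the fixed $\eta_{0}$ and of which $g_{t}$, $\tilde Z^{c}_{t}$, $V^{w}_{t}$ genuinely are functionals, establish compactness of its time shifts by Arzel\`a--Ascoli using the uniform Lipschitz control on $H$, and only afterwards identify its second marginal with $\eta^{*}$ — this last identification being needed precisely because $\eta_{t}\Rightarrow\eta^{*}$ is a priori weaker than convergence of the field $\Theta_{t}$ itself.

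\emph{Step 3 (from $R_{3}=0$ to zero loss).} Fix $c$ in the full-$\mu_{c}$-measure set of Step 2 and put $\psi_{c}(x):=h(x;[\Theta^{*}])\,\tilde C^{*,c}\,\sigma'(\tilde Z^{*,c}(x))$, so that $R_{3}([\Theta^{*}],\tilde C^{*,c},\tilde W^{1,*,w},c)=\Phi_{c}(\tilde W^{1,*,w})$ with $\Phi_{c}(v):=\int_{\mathcal{X}}\psi_{c}(x)\,\sigma(v\cdot x)\,\pi(dx)$. Then $\Phi_{c}$ vanishes $\nu^{*}$-a.e., where $\nu^{*}$ is the $\tilde W^{1}$-marginal of $\eta^{*}$, i.e.\ the law of $w\mapsto\tilde W^{1,*,w}$ under $\mu_{W^{1}}$. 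Since $\eta^{*}$ admits a density, $\nu^{*}$ is absolutely continuous with respect to Lebesgue measure, and being non-degenerate its density is positive on a set of positive Lebesgue measure; hence $\Phi_{c}$ vanishes on a set of positive Lebesgue measure. As $\sigma$ is real analytic (Assumption~\ref{A:AssumptionJS}) and $\mathcal{X}$ is compact, $v\mapsto\Phi_{c}(v)$ is real analytic on $\mathbb{R}^{d}$, so it vanishes identically on $\mathbb{R}^{d}$; the discriminatory property quoted after Assumption~\ref{A:AssumptionJS} (from \cite{Hornik2}) then forces $\psi_{c}(x)=0$ for $\pi$-a.e.\ $x$. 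Because $\sigma'>0$ this gives $h(x;[\Theta^{*}])\,\tilde C^{*,c}=0$ for $\pi$-a.e.\ $x$ and $\mu_{c}$-a.e.\ $c$. Finally, since $\eta^{*}$ has a density its $c$-marginal (the law of $c\mapsto\tilde C^{*,c}$ under $\mu_{c}$) is absolutely continuous, hence is not $\delta_{0}$, so $\tilde C^{*,c}\neq 0$ on a set of positive $\mu_{c}$-measure; choosing such a $c$ yields $f-g(\cdot;[\Theta^{*}])=h(\cdot;[\Theta^{*}])=0$ $\pi$-a.e. Since $g_{t_{n}}\to g(\cdot;[\Theta^{*}])$ by Step 2, $\bar L_{\infty}=\tfrac12\int_{\mathcal{X}}(f(x)-g(x;[\Theta^{*}]))^{2}\pi(dx)=0$, and as $\bar L\ge 0$ everywhere this shows $\eta^{*}$ realizes the global minimum with zero loss.
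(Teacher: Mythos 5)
Your proposal is correct and, in its decisive part, follows the same route as the paper: Step~3 is essentially the paper's argument verbatim --- from the vanishing of $R_{3}$ in (\ref{Eq:Minimizers}) one isolates a positive-$\mu_c$-measure set where $\tilde C^{\ast,c}\neq 0$ and $\sup_{x}|\tilde Z^{\ast,c}(x)|<\infty$ (using the density and finite first moments of $\eta^{*}$), observes that $w\mapsto\Gamma(w)$ in (\ref{Eq:GammaFcn}) is real analytic and vanishes on a set of positive Lebesgue measure, invokes the identity theorem, the discriminatory property of $\sigma$, and $\sigma'>0$ to conclude $f=g(\cdot;[\Theta^{*}])$. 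Your justification for applying the identity theorem (vanishing on a set of positive Lebesgue measure) is in fact slightly cleaner than the paper's phrasing (vanishing on a compact set $A$). Where you differ is in Steps~1--2: the paper does not carry out the Lyapunov-decay/time-subsequence argument or the identification of the weak limit $\eta^{*}$ with a stationary configuration $\Theta^{*}$ at all; it simply starts its proof from the stationarity relations (\ref{Eq:Minimizers}), which were derived earlier as first-order conditions for a minimizer, and Remark~\ref{R:RemarkForMainThm} makes clear that what is really proved is that any stationary point is a global minimum. So the ``main obstacle'' you honestly flag in Step~2 --- that $g_{t}$, $\tilde Z^{c}_{t}$ and $V^{w}_{t}$ are functionals of the joint law of $(\theta,\Theta_{t}(\theta))$ rather than of the marginal $\eta_{t}$, so stationarity of the limit does not follow from $\eta_{t}\Rightarrow\eta^{*}$ alone --- is not closed in the paper either; your sketch of carrying the enriched measure $\mathrm{Law}(\theta,\Theta_{t}(\theta))$ through an Arzel\`a--Ascoli compactness argument is a reasonable way to fill it, but as written it remains a sketch, just as the corresponding passage is implicit in the paper.
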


Before proving Theorem \ref{T:GlobalConvergenceTheorem} we discuss its assumptions in the remarks that follow.
\begin{remark}\label{R:RemarkForMainThm}
It is interesting to note that Theorem \ref{T:GlobalConvergenceTheorem} actually  shows that any stationary point is a global minimum with zero loss. In addition, even though we do not show this here, one can see from the proof of Theorem \ref{T:GlobalConvergenceTheorem} that the assumption on analyticity of the activation function can be replaced by an assumption on $\eta^{*}$ having full support.
\end{remark}

\begin{remark}\label{R:ConvergenceODEs}
The assumption on convergence of $\eta_t$ as $t\rightarrow\infty$ is a required assumption for convergence in the limit as $t\rightarrow\infty$ to occur.  For completeness we mention here that verifying this assumption for the (commonly used in practice) unregularized case that we study here is still an open problem even for the corresponding unregularized  case in the single layer case, see Appendix C.5 in \cite{Chizat2018} or \cite{Montanari}. Even though we do not show this here, one way to guarantee that such an assumption  holds is to add an appropriate regularization term in the loss function,  or to add an appropriate non-degenerate stochastic perturbation in the training algorithm (\ref{SystemDeep}) for the evolution of each one of $C^i_{k+1},W^{1,j}_{k+1}, W^{2,i,j}_{k+1}$ (noisy SGD); see \cite{Szpruch2019,Rotskoff_VandenEijnden2018} for the corresponding arguments in the single layer case. See also \cite{Chizat2019} for conditions under which the assumption on convergence of $\eta_t$ as $t\rightarrow\infty$ holds for non-degenerate sparse problems that include an appropriate regularizing term in the loss function.
\end{remark}

\begin{remark}\label{R:LiftingSpaceOfMeasures}
It is clear that convergence of $[\Theta_{t}]$ as $t\rightarrow\infty$ is linked to the convergence of the measure $\eta_{t}$ as $t\rightarrow\infty$. At this point, we also mention that convergence of the trajectories $[\Theta_{t}]$ as $t\rightarrow\infty$ is linked to the study of {\L}ojasiewicz inequalities \cite{Bolte2017,Mazon2019}. To the best of our knowledge no general results in this direction are currently known neither for our case of interest, nor for problems corresponding to the single-layer case (see Appendix C.5 of \cite{Chizat2018}) (i.e. no general results currently exist for the non-geodesically convex case that we are dealing with here).
\end{remark}

\begin{proof}[Proof of Theorem \ref{T:GlobalConvergenceTheorem}]
We will use the notation $\tilde{\mu}_c(c)$, $\tilde{\mu}_{W^1}(w)$, and $\tilde{\mu}_{W^2}(w)$ for the densities of the measure $\mu_c(dc)$, $\mu_{W^1}(dw)$, and $\mu_{W^2}(dw)$ respectively. From equations (\ref{Eq:Minimizers}), we have that any stationary solution must satisfy
\begin{align}
0 &= \int_{\mathcal{C}}\int_{\mathcal{W}^{1}} \int_{\mathcal{W}^2} \left(\int_{\mathcal{X}}  ( f(x)- g(x;[\Theta^{*}])  )  \tilde C^{\ast, c}
\sigma'(\tilde Z^{\ast, c}(x) )     \sigma ( \tilde W^{1,*,w}\cdot  x) \pi(dx)\right)^{2}\mu_{W^2}(du) \mu_{W^{1}}(dw)\mu_c(dc) \notag \\
&=  \int_{\mathcal{C}}\int_{\mathcal{W}^{1}}  \left(\int_{\mathcal{X}}  ( f(x)- g(x;[\Theta^{*}])  )  \tilde C^{\ast, c}
\sigma'(\tilde Z^{\ast, c}(x) )    \sigma ( \tilde W^{1,*,w}\cdot  x)  \pi(dx)\right)^{2}  \tilde{\mu}_{W^{1}}(w) \tilde{\mu}_c(c) dw dc.
\label{EqnJ1}
\end{align}

Equation (\ref{EqnJ1}) and Assumption \ref{A:SupportInitial} imply that
\begin{eqnarray}
\int_{\mathcal{X}}  ( f(x)- g(x;[\Theta^{*}])  )  \tilde C^{\ast, c} \sigma'(\tilde Z^{\ast, c}(x) )    \sigma ( \tilde W^{1,*,w}\cdot  x)  \pi(dx)  \tilde{\mu}_{W^{1}}(w) \tilde{\mu}_c(c) = 0,
\label{EqualityJ10}
\end{eqnarray}
for almost every $w \in \mathcal{W}^{1}$ and $c \in \mathcal{C}$. Recalling now that
\begin{eqnarray}
Z^{\ast, c}(x) &=&  \int_{\mathcal{W}^{1}} \int_{\mathcal{W}^2}   \tilde  W^{2, \ast, c,w,u }    \sigma ( \tilde W^{1,*,w}\cdot  x)  \mu_{W^2}(du) \mu_{W^{1}}(dw), \notag
\end{eqnarray}
we obtain
\begin{eqnarray}
\int_{\mathcal{C}} \sup_{x \in \mathcal{X}} | Z^{\ast, c}(x) | \mu_c(dc) &\leq& \int_{\mathcal{C}}  \int_{\mathcal{W}^{1}} \int_{\mathcal{W}^2} | \tilde  W^{2, \ast, c,w,u } |   \mu_{W^2}(du) \mu_{W^{1}}(dw) \mu_c(dc) \notag \\
&\leq& \int_{\mathbb{R}^d}  | u | \tilde{\eta}^{\ast}_{\mathcal{W}^{2}}(u) du < \infty,\notag
\end{eqnarray}
where $ \tilde{\eta}^{\ast}_{W^{2}}(u) $ denotes the marginal density of the limiting measure $\eta^{*}$ with respect to $\tilde{W}^{2}$. Analogously we shall define the marginal densities  $ \tilde{\eta}^{\ast}_{c}(c) $ and  $ \tilde{\eta}^{\ast}_{W^{1}}(w) $ with respect to the variables $\mathcal{C}$ and $\tilde{W}^{1}$ respectively.  Similar to before, $\tilde{\eta}^{\ast}(c, w, u)$ is the density of the measure $\eta^{\ast}(dc, dw, du)$.

We must therefore have that $\displaystyle \sup_{x \in \mathcal{X}} | Z^{\ast, c} (x) | \tilde{\mu}_c(c) < \infty$ for almost every $c \in \mathcal{C}$. In addition, since $\eta^{\ast}$ has a density, we have that there exists a set $B = \{ c \in \mathbb{R}: | c - c_0 | < \delta \}$ with $ c_0 \neq 0$ and $0 < \delta < | c_0 |$ such that
\begin{eqnarray}
0 &<& \int_{\mathbb{R}} \mathbf{1}_{ \{ c \in B \} } \tilde{\eta}^{\ast}_{c}(c) dc \notag \\
&=& \int_{\mathcal{C}} \mathbf{1}_{\{ C^{\ast, c} \in B \}} \tilde{\mu}_c(c) dc \notag \\
&=& \int_{\mathcal{C}} \mathbf{1}_{\{ C^{\ast, c} \in B, \displaystyle  \sup_{x \in \mathcal{X}} | Z^{\ast, c}(x) | \tilde{\mu}_c(c) < \infty \} } \tilde{\mu}_c(c) dc.\notag
\end{eqnarray}

Therefore, there must be a set $K \subset \mathbb{R}$ with Lesbegue measure $\lambda(K) > 0$ such that, for almost every $c \in K$,
\begin{eqnarray}
\mathbf{1}_{ \{ C^{\ast, c} \in B, \displaystyle \sup_{x \in \mathcal{X}} | Z^{\ast, c}(x) | \tilde{\mu}_c(c) < \infty \} } \tilde{\mu}_c(c) > 0,
\label{InEqualityJ10}
\end{eqnarray}
which in turn implies that $C^{\ast, c} \neq 0$ and $\displaystyle \sup_{x \in \mathcal{X}} | Z^{\ast, c}(x) | < \infty$ for almost every $c \in K$. Due to equations (\ref{InEqualityJ10}) and (\ref{EqualityJ10}), for almost every $(c,w) \in K \times \mathcal{W}^{1}$,
\begin{eqnarray}
 C^{\ast, c}  &\neq& 0, \notag \\
 \sup_{x \in \mathcal{X}} | Z^{\ast, c}(x) | &<& \infty, \notag \\
0 &=& \int_{\mathcal{X}}  ( f(x)- g(x;[\Theta^{*}])  )  \tilde C^{\ast, c} \sigma'(\tilde Z^{\ast, c}(x) )    \sigma ( \tilde W^{1,*,w}\cdot  x)  \pi(dx)  \tilde{\mu}_{W^{1}}(w) \tilde{\mu}_c(c).
\label{JthreeConditions}
\end{eqnarray}

Thus, there exists a $c_0 \in K$ such that
\begin{eqnarray}
\int_{\mathcal{X}}  ( f(x)- g(x;[\Theta^{*}])  )   C^{\ast, c_0}   \sigma'(\tilde Z^{\ast, c_0}(x) )    \sigma ( \tilde W^{1,*,w}\cdot  x)  \pi(dx)  \tilde{\mu}_{W^{1}}(w) = 0,\notag
\end{eqnarray}
for almost every $w \in \mathcal{W}^{1}$. This of course implies that, for almost every $w \in \mathcal{W}^{1}$,
\begin{eqnarray}
\int_{\mathcal{X}}  ( f(x)- g(x;[\Theta^{*}])  )     \sigma'(\tilde Z^{\ast, c_0}(x) )    \sigma ( \tilde W^{1,*,w}\cdot  x)  \pi(dx)  \tilde{\mu}_{W^{1}}(w) = 0.\notag
\end{eqnarray}

Consequently, we have that
\begin{align}
 \int_{\mathcal{W}^1} \int_{\mathcal{X}}  ( f(x)- g(x;[\Theta^{*}])  )   \sigma'(\tilde Z^{\ast, c_0}(x) )    \sigma ( \tilde W^{1,*,w}\cdot  x)  \pi(dx)  \tilde{\mu}_{W^{1}}(w) dw&=0. \notag
\end{align}

Therefore, since $\eta^{\ast}_{W^{1}}$ has a density, there exists a compact set $A \subset \mathbb{R}^d$ such that, for every $w \in A$,
\begin{align}
\Gamma(w) &= \int_{\mathcal{X}}  ( f(x)- g(x;[\Theta^{*}])  )   \sigma'(\tilde Z^{\ast, c_0}(x) )    \sigma (  w \cdot  x)  \pi(dx) = 0.\label{Eq:GammaFcn}
\end{align}

Since $\sigma(\cdot)$ is analytic, $\Gamma(w)$ is analytic (see Lemma \ref{L:AnalyticityProof}). Therefore, by the identity theorem for real-analytic functions, $\Gamma(w) = 0$ for every $w \in \mathbb{R}^d$. It then follows, since $\sigma(\cdot)$ is discriminatory,
\begin{eqnarray}
( f(x)- g(x;[\Theta^{*}])  )   \sigma'(\tilde Z^{\ast, c_0}(x) )  = 0,\notag
\end{eqnarray}
%for every \textcolor{red}{$x\in\text{supp}(\pi)\cap\mathcal{X}$}.
for every $x \in \mathcal{X}$. Finally, since $ \sigma'(\tilde Z^{\ast, c_0}(x) )  > 0$ due to equation (\ref{JthreeConditions}),
\begin{eqnarray}
 f(x)- g(x;[\Theta^{*}])   = 0,\notag
\end{eqnarray}
for every $x \in \mathcal{X}$, concluding the proof of the theorem.
%\textcolor{red}{for every $x\in\text{supp}(\pi)\cap\mathcal{X}$}, concluding the proof of the theorem.
\end{proof}

We conclude with the the result on analyticity of $\Gamma(w)$ defined in (\ref{Eq:GammaFcn}).
\begin{lemma} \label{L:AnalyticityProof}
Assume $\mathcal{X}$ is a compact set and the activation unit $\sigma(\cdot)$ is real analytic and bounded. Then,
\begin{align}
\Gamma(w) &= \int_{\mathcal{X}}  ( f(x)-g(x;[\Theta^{*}])  )   \sigma'(\tilde Z^{\ast, c_0}(x) )    \sigma (  w \cdot  x)  \pi(dx)\notag
\end{align}
defined in (\ref{Eq:GammaFcn}) is a real analytic function as well.
\end{lemma}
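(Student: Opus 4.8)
\emph{Proof strategy.} The plan is to reduce the statement to differentiation under the integral sign combined with the classical derivative characterization of real analyticity. Set $\phi(x)\vcentcolon=( f(x)-g(x;[\Theta^{*}]) )\,\sigma'(\tilde Z^{\ast,c_0}(x))$, so that $\Gamma(w)=\int_{\mathcal{X}}\phi(x)\,\sigma(w\cdot x)\,\pi(dx)$. First I would record that $\phi$ is bounded on $\mathcal{X}$: $f$ takes values in the compact set $\mathcal{Y}$; $\sigma'(\tilde Z^{\ast,c_0}(\cdot))$ is bounded because $\sigma'$ is continuous and $\sup_{x\in\mathcal{X}}|\tilde Z^{\ast,c_0}(x)|<\infty$ (as established in the proof of Theorem \ref{T:GlobalConvergenceTheorem}); and $g(x;[\Theta^{*}])=\int_{\mathcal{C}}\tilde C^{\ast,c}\,\sigma(\tilde Z^{\ast,c}(x))\,\mu_c(dc)$ is bounded uniformly in $x$ since $\sigma$ is bounded and $\int_{\mathcal{C}}|\tilde C^{\ast,c}|\,\mu_c(dc)=\int_{\mathbb{R}}|c|\,\tilde\eta^{\ast}_c(c)\,dc<\infty$ by the finite-first-moment hypothesis on $\eta^{\ast}$. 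Since $\pi$ is a probability measure, all the integrals below are then finite.

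Second, I would differentiate under the integral sign. Because $w\mapsto w\cdot x$ is linear, for any multi-index $\alpha$ one has $\partial_w^{\alpha}\big(\sigma(w\cdot x)\big)=\sigma^{(|\alpha|)}(w\cdot x)\,x^{\alpha}$. For a compact set $K\subset\mathbb{R}^d$, the set $J_K\vcentcolon=\{\,w\cdot x:w\in K,\ x\in\mathcal{X}\,\}$ is a compact subset of $\mathbb{R}$ (the continuous image of the compact set $K\times\mathcal{X}$), so every derivative $\sigma^{(n)}$ is bounded on $J_K$; together with $\sup_{x\in\mathcal{X}}|x^{\alpha}|<\infty$ and finiteness of $\pi$, this dominates the integrand locally uniformly in $w$, so repeated differentiation under the integral is legitimate and gives $\Gamma\in C^{\infty}(\mathbb{R}^d)$ with
\[
\partial_w^{\alpha}\Gamma(w)=\int_{\mathcal{X}}\phi(x)\,\sigma^{(|\alpha|)}(w\cdot x)\,x^{\alpha}\,\pi(dx).
\]

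Third, I would invoke the standard criterion: a $C^{\infty}$ function is real analytic on $\mathbb{R}^d$ if and only if for every compact $K$ there exist $C_K,R_K<\infty$ with $|\partial_w^{\alpha}\Gamma(w)|\le C_K R_K^{|\alpha|}\alpha!$ for all $w\in K$ and all $\alpha$. Since $\sigma$ is real analytic on $\mathbb{R}$, a compactness argument on $J_K$ — cover $J_K$ by finitely many open disks to which $\sigma$ extends holomorphically, let $\rho_K>0$ be the smallest of the radii, and apply Cauchy's estimates — yields a uniform bound $\sup_{s\in J_K}|\sigma^{(n)}(s)|\le A_K B_K^{\,n}\,n!$ for some $A_K,B_K<\infty$ and all $n\ge 0$. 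Combining this with $\sup_{x\in\mathcal{X}}|x^{\alpha}|\le\varrho^{|\alpha|}$, where $\varrho\vcentcolon=\sup_{x\in\mathcal{X}}\max_i|x_i|<\infty$, and with $|\alpha|!\le d^{|\alpha|}\alpha!$, produces $|\partial_w^{\alpha}\Gamma(w)|\le\|\phi\|_{\infty}A_K\,(B_K\varrho d)^{|\alpha|}\alpha!$ on $K$, which is exactly the required bound; hence $\Gamma$ is real analytic.

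The main obstacle is the third step: converting the one-variable analyticity of $\sigma$ into the \emph{uniform} factorial-type derivative bound over the compact set $J_K$ of arguments that actually occur, which is precisely where compactness of $\mathcal{X}$ enters; once that uniform control is secured, the multi-index bookkeeping with the $x^{\alpha}$ factors is routine. An equivalent and perhaps cleaner route is to complexify directly: extend $\sigma$ holomorphically to a $\rho_K$-neighborhood of $J_K$ in $\mathbb{C}$, observe that then $w\mapsto\sigma(w\cdot x)$ extends holomorphically to a fixed complex ball around each real $w_0$, uniformly in $x\in\mathcal{X}$, and conclude by Morera's theorem (or by differentiation under the integral and the Cauchy--Riemann equations) that $\Gamma$ itself extends holomorphically near $w_0$, hence is real analytic there.
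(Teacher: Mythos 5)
Your proposal is correct and follows essentially the same route as the paper: bound the prefactor $\phi$, differentiate under the integral sign to obtain $\partial_w^{\alpha}\Gamma(w)=\int_{\mathcal{X}}\phi(x)\,\sigma^{(|\alpha|)}(w\cdot x)\,x^{\alpha}\,\pi(dx)$, and verify the factorial-type derivative bounds that characterize real analyticity. The only difference is one of packaging: the paper cites Proposition 2.2.10 of Krantz--Parks both to get the uniform factorial bounds on $\sigma^{(n)}$ over compacts and to conclude analyticity of $\Gamma$, whereas you derive the former yourself via holomorphic extension and Cauchy estimates and handle the multi-index bookkeeping explicitly.
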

\begin{proof}[Proof of Lemma \ref{L:AnalyticityProof}]
The partial derivative of $\sigma (w \cdot x)$ is
\begin{eqnarray}
 \frac{\partial^{ | \mu |}}{ \partial w^{\mu}} \sigma ( w \cdot x)  = \sigma^{|\mu|} ( w \cdot x) \prod_{i=1}^d x^{\mu_i}.\notag
\end{eqnarray}

Before we prove analyticity of $\Gamma(w)$, we show that $g(x)$ is finite. Recall that
\begin{eqnarray}
g(x;[\Theta^{*}]) = \int_{\mathcal{C}} C^{\ast, c} \sigma(Z^{\ast, c}(x) ) \tilde{\mu}_c(c) dc.\notag
\end{eqnarray}
Therefore, due to $\sigma(\cdot)$ being bounded and $\eta^{\ast}$ having marginals with finite first moments,
\begin{eqnarray}
| g(x) | &\leq& K_0 \int_{\mathcal{C}} |C^{\ast, c} |\tilde{\mu}_c(c) dc \notag \\
&\leq& K_0 \int_{\mathcal{C}} | c| \tilde{\eta}^{\ast}_{c}(c) dc \notag \\
&\leq& K.\notag
\end{eqnarray}

Next,
\begin{eqnarray}
 \frac{\partial^{ | \mu |} \Gamma}{ \partial w^{\mu}}(w) =  \int_{\mathcal{X}}  ( f(x)- g(x;[\Theta^{*}])  )   \sigma'(\tilde Z^{\ast, c_0}(x) )    \sigma^{|\mu|} ( w \cdot x) \prod_{i=1}^d x^{\mu_i}  \pi(dx).\notag
\end{eqnarray}

Due to the compactness of $\mathcal{X}$, $\displaystyle \sup_{x \in \mathcal{X}} | \sigma'(\tilde Z^{\ast, c_0}(x) ) | < \infty$, and the fact that $\sigma$ is a real analytic function, Proposition 2.2.10 of  \cite{KrantzParks2002} shows there exist constants $C_1, C_2 > 0$ such that
\begin{eqnarray}
\bigg{|} \frac{\partial^{ | \mu |} \Gamma}{ \partial w^{\mu}}(w)  \bigg{|} &\leq& C_1 \frac{\mu !}{R_1^{|\mu|}}  \prod_{i=1}^d C_2^{\mu_i}  \notag \\
&=& C_1 \frac{\mu !}{R_1^{|\mu|}}   C_2^{| \mu | } \notag \\
&=& C_1 \frac{\mu !}{R_1^{|\mu|}}   ( R_2^{-1})^{| \mu | } \notag \\
&=& C_1 \frac{\mu !}{(R_1 R_2)^{|\mu|}} \notag \\
&=& C_1 \frac{\mu !}{R^{|\mu|}},\notag
\end{eqnarray}
where $R_2 = \frac{1}{C_2}$ and $R = R_1 R_2$. It follows then again from Proposition 2.2.10 of  \cite{KrantzParks2002} that $\Gamma(w)$ is a real analytic function. This concludes the proof of the lemma.
\end{proof}

\section{Discussion on the limiting results and extensions to multi-layer networks with greater depth}\label{S:Consequences}

In Subsection \ref{SS:Disucssion},  we discuss some of the implications of our theoretical convergence results and presents related numerical results. In Section \ref{SS:Extension}, we show that the procedure can be extended to treat deep neural networks with more than two hidden layers.  General challenges in the study of multi-layer neural networks are explored Subsection \ref{ChallengesIntro}.

\subsection{Discussion on the limiting results}\label{SS:Disucssion}

It is instructive to notice that the results of this paper recover the results of \cite{NeuralNetworkLLN} (see also \cite{Montanari,Rotskoff_VandenEijnden2018}) if we restrict attention to the one-layer case.  Indeed, let us set $N_{2}=1$, $N_1=N$, $C^{i}=1$ and $H^{2,i}=Z^{2,i}$ in (\ref{Eq:2DNN})-(\ref{Eq:DeepNN}), and we get the single-layer neural network
\begin{eqnarray}
g_{\theta}^{N}(x) =  \frac{1}{N} \sum_{j=1}^{N}W^{2,j}\sigma\left(W^{1,j}\cdot x\right),\notag%\label{Eq:NN_SingleLayer}
\end{eqnarray}
with the corresponding empirical measure of the parameters becoming
\begin{eqnarray}
\gamma_t^{N} = \tilde \gamma_{\floor*{N t} }^{N}, \text{ where } \tilde \gamma_k^{N} = \frac{1}{N} \sum_{j=1}^{N} \delta_{ W_k^{1,j}, W^{2,j}_{k}}.\label{Eq:EmpiricalMeasureOneLayer}
\end{eqnarray}

In that case notice that we can simply write
\begin{eqnarray}
g_{\theta_{\floor*{N t} }}^{N}(x)  =   \la w^2 \sigma(w^{1} \cdot x), \gamma^{N}_t \ra .\nonumber
\end{eqnarray}

Then, it is relatively straightforward to notice that the result of Lemma \ref{LemmaFirstLayermaintext} boils down to the one layer convergence results of \cite{NeuralNetworkLLN}, see also \cite{Montanari,Rotskoff_VandenEijnden2018}. Namely, if we write $\gamma_t$ for the limit in probability of $\gamma^{N}_t$ we get that
\begin{align}
\lim_{N \rightarrow \infty} g_{\theta_{\floor*{N t} }}^{N}(x)=\la w^2 \sigma(w^{1} \cdot x), \gamma_t \ra.\label{Eq:CompressedNNoutput1L}
\end{align}

It is useful to compare the limits of the neural network output in the one layer and two layer cases, (\ref{Eq:CompressedNNoutput1L}) and (\ref{Eq:CompressedNNoutput}) respectively.

Some general remarks of interest follow.
\begin{itemize}
\item{It is clear that the two layer case (and more generally the multi-layer case) is more complex than the single-layer case, which provides some intuition for the increased complexity of deep neural networks when compared to shallow neural networks. For instance, in the single layer case the neural network output is given explicitly by (\ref{Eq:CompressedNNoutput1L}). On the other hand, in the multi-layer case, the neural network's output is the solution to an integral equation as given by (\ref{Eq:LLN_DNNTheoremIntro})-(\ref{Eq:CompressedNNoutput}).}
\item{In contrast to the single-layer case, in the multi-layer case the asymptotic weight distribution is layer dependent and weights do not necessarily become independent in the large hidden units limit. In fact, as our result demonstrates, see (\ref{Eq:LLN_DNNTheoremIntro0})-(\ref{Eq:LLN_DNNTheoremIntro}), the neural network output in the large hidden units limit and large SGD-iterates does not depend on the individual weights, but on paths of weights that connect the input layer to the output layer. One could regard these paths, i.e. the solution to the random ODE's in (\ref{Eq:LLN_DNNTheoremIntro}), as typical representative paths of the weights in the limit connecting the different layers of the ``limiting" neural network.}
\item{In the multi-layer case, units at a given layer receive an aggregate signal from units of the previous and/or of the next layer.}
\item{The law of large numbers for a single-layer network indicates that the network will converge in probability to a deterministic limit. That is, after a certain point, adding more hidden units will not increase the accuracy. Our main result Theorem \ref{Theorem1}
suggests that the same conclusion is true for multi-layer neural networks as well.
    }
\item{Theorem \ref{T:GlobalConvergenceTheorem} combined with Theorem \ref{Theorem1} characterizes the limiting behavior of the objective function $L^{N_{1},N_{2}}(\theta)$ from (\ref{LossFunction}). Section \ref{S:GlobalConvergence} shows that, under the proper assumptions, the limit objective function decreases in the gradient direction of the paths governing the limiting behavior of the weights. The limit ODEs seek to minimize the limit objective function and the global minimum is expected to be recovered.}
\end{itemize}

One practical consequence of our analysis is that the parametrization of the learning rates, see (\ref{Eq:LearningRate}), indicates that one should use larger learning rates for the weights that connect the different layers (e.g., $W^{2,i,j}$) as compared to the weights for the input or output layers (e.g., $C^{i}$ and $W^{1,j}$). Notice that this is also the case for the three layer case outlined in Subsection \ref{SS:Extension} below.

As will be explained in Section \ref{SS:Extension}, the law of large numbers can be extended to multi-layer neural networks with an arbitrary number of layers. The law of large numbers will only hold under a certain choice of the learning rates. The learning rates need to be scaled with the number of hidden units in each layer. For a multi-layer network with $L$ layers, the learning rates are
\begin{eqnarray}
\alpha_C =\frac{N_L}{N_1},\quad \alpha_{W,1} =1, \quad \alpha_{W,2} = N_2, \quad \alpha_{W,L} =\frac{N_{L-1} N_L}{N_1},\quad \alpha_{W,\ell} = \frac{N_{\ell-1} N_{\ell}}{N_1},
\end{eqnarray}
where $N_{\ell}$ is the number of hidden units in the $\ell$-th layer.

If the learning rates are constant in the number of hidden units $N_1, \ldots, N_L$, it turns out that the network will not train as $N_1, \ldots, N_L \rightarrow \infty$ (i.e., in the limit, the network parameters will remain at their initial conditions).

The necessity of scaling the learning rates in the asymptotic regime of large numbers of hidden units (i.e., wide layers) is one of the interesting products of the mean-field limit analysis. A numerical example is presented in Figure \ref{CIFAR10scaledLR} below. A deep neural network is trained to classify images for the CIFAR10 dataset \cite{Krizhevsky}. The CIFAR10 dataset contains $60,000$ color images in $10$ classes (airplane, automobile, bird, cat, deer, dog, frog, horse, ship, and truck). The dataset is divided into $50,000$ training images and $10,000$ test images. Each image has $32 \times 32 \times 3$ pixels. The goal is to train a neural network to correctly classify each image based solely on the image pixels as an input. The neural network we use has the mean-field normalizations $\frac{1}{N_{\ell}}$ in each layer $\ell$. There are $8$ convolution layers which each have $64$ channels. This is followed by two fully-connected layers which each have $500$ units. We first train the neural network using the scaled learning rates. Then, we also train the neural network with the standard stochastic gradient descent algorithm (no scaling of the learning rates). Using the scaled learning rates, we achieve a high test accuracy. However, without the scalings, the neural network does not train (i.e., it remains at a very low accuracy).
\begin{figure}[ht!]
\begin{center}
\includegraphics[width=.47\textwidth]{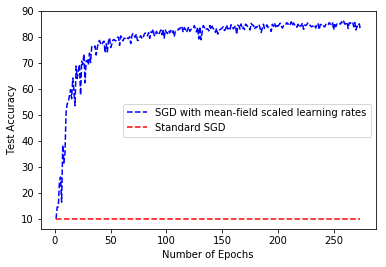}
\end{center}
\caption{Performance of deep neural network on CIFAR10 dataset with and without scaled learning rates.}
\label{CIFAR10scaledLR}
\end{figure}

\subsection{Extension to multi-layer neural networks with more layers}\label{SS:Extension}

The procedure developed in this paper naturally extends to multi-layer neural networks with more layers than two layers. For brevity, let us present the result in the case of three layers. The situation for more layers is the same, albeit with more complicated algebra. A multi-layer neural network with three layers takes the form
\begin{eqnarray}
g_{\theta}^{N_1, N_2, N_3}(x) = \frac{1}{N_{3}} \sum_{i=1}^{N_{3}} C^i \sigma\left( \frac{1}{N_{2}} \sum_{j=1}^{N_{2}}W^{3,i,j}\sigma\left(\frac{1}{N_{1}} \sum_{\nu=1}^{N_{1}}W^{2,j,\nu}\sigma\left(W^{1,\nu}\cdot x\right)\right)\right),\label{Eq:3DNN}
\end{eqnarray}
which can be also written as
\begin{eqnarray}
H^{1,\nu}(x) &=& \sigma( W^{1,\nu}\cdot x), \phantom{.....} \nu =1, \ldots, N_1, \notag \\
Z^{2,j}(x) &=& \frac{1}{N_1} \sum_{\nu=1}^{N_1} W^{2,j,\nu} H^{1,\nu}(x), \phantom{.....} j=1, \ldots, N_2\notag \\
H^{2,j}(x) &=& \sigma \bigg{(} Z^{2,j}(x) \bigg{)}, \phantom{.....}    \notag \\
Z^{3,i}(x) &=& \frac{1}{N_2} \sum_{j=1}^{N_2} W^{3,i,j} H^{2,j}(x), \phantom{.....} i =1, \ldots, N_3 \notag \\
H^{3,i}(x) &=& \sigma \bigg{(} Z^{3,i}(x) \bigg{)}, \phantom{.....}    \notag \\
g^{N_1, N_2,N_3}_{\theta}(x) &=& \frac{1}{N_3} \sum_{i=1}^{N_3} C^i H^{3,i}(x). \label{Eq:3DeepNN}
\end{eqnarray}
where $C^i, W^{2,j,\nu}, W^{3,i,j} \in \mathbb{R}$ and $x, W^{1,\nu} \in \mathbb{R}^{d}$.  The neural network model has parameters
\begin{eqnarray}
\theta = (C^1, \ldots, C^{N_3}, W^{2,1,1} \ldots, W^{2,N_2,N_1}, W^{3,1,1} \ldots, W^{3,N_3,N_2}, W^{1,1},\ldots W^{1,N_1}), \notag
\end{eqnarray}
 which must be estimated from data. The number of hidden units in the first layer is $N_1$, the number of hidden units in the second layer is $N_2$, and the number of hidden units in the third layer is $N_3$. Naturally, the loss function now becomes
\begin{eqnarray}
L^{N_1, N_2, N_3}(\theta) =\frac{1}{2} \mathbb{E}_{Y,X} \bigg{[} ( Y - g^{N_1, N_2,N_3}_{\theta}(X)  )^2 \bigg{]},\notag
%\label{LossFunction}
\end{eqnarray}
where the data $(X,Y) \sim \pi(dx,dy)$.

The stochastic gradient descent (SGD) algorithm for estimating the parameters $\theta$ is, for $k \in\mathbb{N}$, $\nu =1, \ldots, N_1$, $i =1, \ldots, N_3$ and $j=1, \ldots, N_2$ is
\begin{align}
&C^i_{k+1} = C^i_{k} + \frac{\alpha_C^{N_1, N_2,N_3} }{N_3} \big{(} y_k - g^{N_1, N_2, N_3}_{\theta_k}(x_k)  \big{)} H_k^{3,i}(x_k), \notag \\
&W^{1,\nu}_{k+1} = W^{1,\nu}_{k} +  \frac{ \alpha_{W,1}^{N_1, N_2, N_3} }{ N_1} \big{(} y_k - g^{N_1, N_2, N_3}_{\theta_k}(x_k)  \big{)} \left( \frac{1}{N_3} \sum_{i=1}^{N_3} C_k^i \sigma'(Z_k^{3, i}(x_k) ) \left(\frac{1}{N_2}\sum_{j=1}^{N_2} W^{3,i,j}\sigma'(Z^{2,j}(x_{k})) W^{2,j,\nu}_k \right)\right)\times\notag\\
&\qquad\qquad\times  \sigma'(W^{1,\nu}_k \cdot x_k) x_k, \notag \\
&W^{3,i,j}_{k+1} = W^{3,i,j}_{k}  + \frac{\alpha_{W,3}^{N_1, N_2, N_3} }{N_2 N_3}  \big{(} y_k - g^{N_1, N_2, N_3}_{\theta_k}(x_k) \big{)} C_k^i \sigma'(Z_k^{3,i}(x_k) )  H^{2,j}_k(x_k), \notag \\
&W^{2,j,\nu}_{k+1} = W^{2,j,\nu}_{k}  + \frac{\alpha_{W,2}^{N_1, N_2, N_3} }{N_1 N_2 }  \big{(} y_k - g^{N_1, N_2, N_3}_{\theta_k}(x_k) \big{)}  \frac{1}{N_3} \sum_{i=1}^{N_3} C_k^i \sigma'(Z_k^{3,i}(x_k) ) W^{3,i,j}_{k}  \sigma'(Z_k^{2,j}(x_k) )  H^{1,\nu}_k(x_k), \notag \\
\end{align}
where
\begin{align}
&H_k^{1,\nu}(x_k) = \sigma( W^{1,\nu}_k\cdot x_k), \phantom{.....} \nu =1, \ldots, N_1, \notag \\
&Z_k^{2,j}(x_k) = \frac{1}{N_1} \sum_{\nu=1}^{N_1} W_k^{2,j,\nu} H_k^{1,\nu}(x_k), \notag \\
&H_k^{2,j}(x_k) = \sigma \bigg{(} Z_k^{2,j}(x_k) \bigg{)}, \phantom{.....}    \notag \\
&Z_k^{3,i}(x_k) = \frac{1}{N_2} \sum_{j=1}^{N_2} W_k^{3,i,j} H_k^{2,j}(x_k), \phantom{.....} \notag \\
&H_k^{3,i}(x_k) = \sigma \bigg{(} Z_k^{3,i}(x_k) \bigg{)}, \phantom{.....}    \notag \\
&g^{N_1, N_2,N_3}_{\theta_k}(x_k) = \frac{1}{N_3} \sum_{i=1}^{N_3} C^i_k H_k^{3,i}(x_k).
\label{SystemDeep_3}
\end{align}
where $\alpha_C^{N_1, N_2, N_3}$, $\alpha_{W,1}^{N_1, N_2, N_3}$, $\alpha_{W,2}^{N_1, N_2, N_3}$ and $\alpha_{W,3}^{N_1, N_2, N_3}$ are the learning rates.  The parameters at step $k$ are
\[
\theta^{N_1,N_2,N_3}_k = (C^1_k, \ldots, C^{N_3}_k, W^{2,1,1}_k \ldots, W^{2,N_2,N_1}_k, W^{3,1,1}_k \ldots, W^{3,N_3,N_2}_k, W^{1,1}_k,\ldots W^{1,N_1}_k).
\]
 $(x_k, y_k)$ are samples of the random variables $(X,Y)$. We assume a condition analogous to Assumption \ref{A:Assumption1}.

Let us now choose the learning rates to be
\[
\alpha_C^{N_1, N_2, N_3}=\frac{N_3}{N_1},\quad \alpha_{W,1}^{N_1, N_2, N_3}=1, \quad \alpha_{W,3}^{N_1, N_2, N_3}=\frac{N_2 N_3}{N_1},\quad \alpha_{W,2}^{N_1, N_2, N_3}=N_2
\]

Similar to before, define the empirical measure
\begin{eqnarray}
\tilde \gamma_k^{N_1, N_2, N_3} &=& \frac{1}{N_1} \sum_{\nu=1}^{N_1} \delta_{ W^{1,\nu}_k, W^{2,1,\nu}_k,\ldots, W^{2,N_2,\nu}_k, W^{3,1,1}_k \ldots, W^{3,N_3,N_2}_k, C^1_k, \ldots, C^{N_3}_k }.\notag%\label{Eq:EmpiricalMeasureMultilayer}
\end{eqnarray}

The time-scaled empirical measure is
\begin{eqnarray}
\gamma_t^{N_1, N_2,N_3} \vcentcolon= \tilde \gamma_{\floor*{N_{1} t} }^{N_1, N_2, N_3},\nonumber
\end{eqnarray}
and the corresponding time-scaled neural network output is $g_{t}^{N_1, N_2, N_3}(x) = g_{\theta_{ \floor*{N_{1} t} }}^{N_1, N_2,N_3}(x)$.

Following the same procedure as for the two layer case one expects to get the following limit that describes $g_{t}(x)$ (the iterated limit of $g_{t}^{N_1, N_2, N_3}(x)$ as first $N_1\rightarrow\infty$, then $N_2\rightarrow\infty$, and then $N_3\rightarrow\infty$):
\begin{align}
d \tilde C_t^{c} &=  \int_{\mathcal{X} \times \mathcal{Y} } \big{(} y -  g_t(x)  \big{)} \tilde H_t^{3,c}(x) \pi(dx,dy)  dt, \phantom{....} %\notag \\
\tilde C_0^{c} = c, \notag \\
 d \tilde W_t^{1,w} &=  \int_{\mathcal{X} \times \mathcal{Y} } \big{(} y - g_t(x)  \big{)} V_t^{w}(x) \sigma'( W_t^{1,w} \cdot x) x \pi(dx,dy)   dt, \phantom{....}
 \tilde W_0^{1,w} = w, \notag \\
d \tilde W^{3,c,v}_{t} &=  \int_{\mathcal{X} \times \mathcal{Y} } \big{(} y - g_t(x) \big{)} \tilde C_t^{c} \sigma'(\tilde Z_t^{3,c}(x) )  \tilde H_t^{2,v}(x) \pi(dx,dy)  dt,  \phantom{......} \tilde W^{3,c,v}_0 = v,  \notag \\
d \tilde W^{2,w,u,v}_{t} &=  \int_{\mathcal{X} \times \mathcal{Y} } \big{(} y - g_t(x) \big{)} L_t^{v}(x) \sigma'(\tilde Z_t^{2,v}(x) ) \tilde H_t^{1,w}(x) \pi(dx,dy)  dt,  \phantom{......} \tilde W^{2,w,u,v}_0 = u,  \notag \\
\tilde H_t^{1,w}(x) &= \sigma ( \tilde W_t^{1,w}\cdot  x), \notag \\
%\tilde Z_t^{c}(x) &=& \int_{\mathcal{W}^{1}}  V_t^{c,w}  \tilde H_t^{1,w}(x)  \mu_w(dw) , \notag \\
\tilde Z_t^{2,v}(x) &=  \int_{\mathcal{W}^{1}} \int_{\mathcal{W}^2}   \tilde  W^{2,w,u,v}_t  \tilde H^{1,w}_t(x)  \mu_{W^2}(du) \mu_{W^{1}}(dw) , \notag \\
\tilde H_t^{2,v}(x) &= \sigma( \tilde Z_t^{2,v}(x) ), \notag \\
\tilde Z_t^{3,c}(x) &=  \int_{\mathcal{W}^3}   \tilde  W^{3,c,v}_t  \tilde H^{2,v}_t(x)  \mu_{W^3}(dv) , \notag \\
\tilde H_t^{3,c}(x) &= \sigma( \tilde Z_t^{3,c}(x) ), \notag \\
V_t^{w}(x) &= \int_{\mathcal{C}}  \tilde C_t^c  \sigma'(\tilde Z_t^{3,c}(x) ) \left( \int_{\mathcal{W}^3}  \tilde W^{3,c,v}_t  \sigma'(\tilde Z_t^{2,v}(x) )\left(\int_{\mathcal{W}^2}  \tilde W^{2,w,u,v}_t \mu_{W^2}(du) \right)\mu_{W^3}(dv)\right) \mu_c(dc)\notag\\
L_t^{v}(x) &= \int_{\mathcal{C}}  \tilde C_t^c  \sigma'(\tilde Z_t^{3,c}(x) )   \tilde W^{3,c,v}_t \mu_c(dc)\notag\\
g_t(x) &= \int_{\mathcal{C}}  \tilde C_t^{c} \tilde H_t^{3,c}(x) \mu_c(dc),  \label{Eq:LLN_DNNTheoremIntro3}
\end{align}

In other words, one expects to be able to write that the neural network's output is
\[
g_t(x)=\int_{\mathcal{C}}  \tilde C_t^{c} \left(\sigma\left( \int_{\mathcal{W}^3}   \tilde  W^{3,c,v}_t  \sigma\left( \int_{\mathcal{W}^1} \int_{\mathcal{W}^2}   \tilde  W^{2,w,u,v}_t  \tilde \sigma ( \tilde W_t^{1,w} \cdot x)  \mu_{W^2}(du) \mu_{W^{1}}(dw) \right)  \mu_{W^3}(dv) \right)\right) \mu_c(dc).
\]

A computation analogous to the one in Section \ref{S:GlobalConvergence} shows that, as expected, the limit ODEs in (\ref{Eq:LLN_DNNTheoremIntro3}) seek to minimize the corresponding limit objective function as $t \rightarrow \infty$. We leave the rigorous proof for the form of $g_{t}(x)$ in the three layer neural network case to the interested reader.

\subsection{Challenges in the analysis of multi-layer neural networks} \label{ChallengesIntro}

Challenges arise in the study of the asymptotics of multi-layer neural networks. \cite{NeuralNetworkLLN}, \cite{Montanari}, and \cite{Rotskoff_VandenEijnden2018} leveraged traditional approaches from the mean-field and interacting particle system literature to characterize the asymptotics of single-layer neural networks. However, it turns out that the traditional mean-field approach cannot be used for multi-layer neural networks.

A standard approach for analyzing (\ref{SystemDeep}) as $N_1, N_2 \rightarrow \infty$ would be to construct an empirical measure $\rho_k^{N_1, N_2}$ of the parameters $\theta_k$ at training step $k$, as for example $\tilde \gamma_k^{N_1,N_2}$. Then, we could study the behavior of $\rho_k^{N_1, N_2}$ as $N_1, N_2 \rightarrow \infty$. This empirical measure $\rho_k^{N_1, N_2}$ needs to be designed such that the dynamics of $\rho_k^{N_1, N_2}$ can be written in terms of $\rho_k^{N_1, N_2}$ itself and the data $(x_k, y_k)$ (plus martingale and remainder terms). That is, the dynamics of $\rho_k^{N_1, N_2}$ are \emph{closed}.

This is straightforward for single-layer neural networks (see \cite{NeuralNetworkLLN}), but it is challenging to do for multi-layer neural networks. In the case of single-layer networks, the empirical measure is simply given by (\ref{Eq:EmpiricalMeasureOneLayer}) and its analysis has been successfully carried on in \cite{NeuralNetworkLLN}. One is tempted to do the same thing for the multi-layer case, i.e. study the limit of the empirical measure defined in (\ref{Eq:EmpiricalMeasureMultilayer}). The problem that one faces with this formulation is that, in contrast to the single-layer case, the dimension of the space on which the empirical measure takes values increases with $N_2$. Therefore, the problem cannot be studied using such a empirical measure.

An alternative approach, which is also natural in this case, is to try and create ``nested measures", sometimes called multi-level measure valued processes in mathematical biology, see \cite{Dawson1997,DawsonHochberg1982,DawsonHochberg1991, DawsonHochberg1996, DawsonGartner1998, DawsonGreven1993, DawsonWu1996} and the review paper \cite{Dawson2018}.

Let us explain how to construct a nested empirical measure for (\ref{SystemDeep}) and why it will not work. In order to simplify notation, let $N_1 = N_2 = N$ and $\rho_k^{N_1, N_2} = \rho_k^N$ in the following example. Considering $N_1\neq N_2$ and taking subsequent limits does not alter the conclusions below.
\begin{itemize}
\item Let's first examine the parameter $W^{1,j}$ in the first layer. The $j$-th unit in the first layer (i.e., $H^{1,j} = \sigma( W^{1,j}\cdot x) $) is connected to all of the hidden units in the second layer (i.e., $H^{2,i}$) via the weights $W^{2,i,j}$. Therefore, there is a measure associated with each $W^{1,j}$ which must track $\{ W^{2,i,j}, Z^i, C^i \}_{i=1}^N$. $W^{2,i,j}$ and $Z^i$ are required for calculating the SGD update for $W^{1,j}$ (see \ref{SystemDeep}). This measure is $\nu^{N,j}_k = \frac{1}{N} \sum_{i=1}^N \delta_{ W^{2,i,j}_k, Z^i_k, C^i_k }$.
\item Let's next examine the parameter $C^i$ in the second layer. The $i$-th unit in the second layer is connected to all of the hidden units in the first layer via the weights $W^{2,i,j}$. Therefore, for each $C^i$, we must track $\{ W^{2,i,j},  W^{1,j} \}_{j=1}^N$ in order to calculate the SGD update for $C^i$ (see \ref{SystemDeep}). Furthermore, updating $C^i$ requires tracking $W^{1,j}$, and updating $W^{1,j}$ requires tracking $\nu^j$.  Therefore, updates to $C^i$ require the empirical measure $\mu^{N,i}_k = \frac{1}{N} \sum_{j=1}^N \delta_{ W^{2,i,j}_k , W^{1,j}_k, \nu^{N,j}_k } \in \mathcal{M}(  \mathbb{R} \times \mathbb{R}^d \times \mathcal{M} ( \mathbb{R}^3 ) )$.
\item Finally, the entire network at iteration $k$ is specified by the empirical measure
\begin{eqnarray}
\rho_k^N = \frac{1}{N} \sum_{i=1}^N \delta_{ C^i_k  , \mu^i_k } \in \mathcal{M} ( \mathbb{R} \times \mathcal{M}(  \mathbb{R} \times \mathbb{R}^d \times \mathcal{M}( \mathbb{R}^3 ) ) ),
\label{Defrhok}
\end{eqnarray}
\end{itemize}
where $\mathcal{M}(E)$ is the space of measures on the metric space $E$. Notice that the process (\ref{Defrhok}) involves \emph{nested measures} (sometimes called ``multi-level processes"). The process $\rho_k$ takes values in a \textit{space of nested measures} $\mathcal{M}( \mathcal{M} ( \mathcal{M} ( \cdots ) ) )$.

Careful inspection of $\rho_k^N$ identifies a crucial problem: its dynamics are not closed. The evolution of $\nu^{N,j}_k$ (the innermost measure in the nested measures)
cannot be written in terms of $\rho_k^N$. In particular, updating $\nu^{N,j}_k$ requires also updating $(W^{2,i,j}, Z^i_k, C^i_k)_{i=1}^N$. This would in fact require knowledge of $(W^{2,i,j}, Z^i_k, C^i_k, \mu^{N,i}_k)_{i=1}^N$, i.e. we would have to re-define $\nu^{N,j}_k$ as $\frac{1}{N} \sum_{i=1}^N \delta_{ W^{2,i,j}_k, Z^i_k, C^i_k, \eta^{N,i}_k }$ where $\eta^{N,i}_k = \frac{1}{N} \sum_{m=1}^N \delta_{ W^{2,i,m}_k , W^m_k }$. This leads to re-defining $\rho_k^N$ as taking values in $\mathcal{M} ( \mathbb{R} \times \mathcal{M}(  \mathbb{R} \times \mathbb{R}^d \times \mathcal{M} ( \mathbb{R}^3 \times \mathcal{M} ( \mathbb{R} \times \mathbb{R}^d ) ) ) )$, i.e. the space of $4$-nested measures (before it was $3$-nested measures). However, the closure problem remains, since the evolution of $\eta^{N,i}_k$ cannot be written in terms $\rho_k^N$. In fact, there does not seem to exist any finite number of nested measures for which the empirical measure $\rho_k^N$'s dynamics are closed and despite our best efforts we have not managed to find one. This closure problem then leads to non-trivial issues associated with establishing a well define limit.

For completeness, we also remark that a third alternative is to define the candidate empirical measure as an appropriately normalized double sum over both indices corresponding to the two hidden layers and then consider the limit of this measure as $N_1,N_2\rightarrow\infty$. However, this approach also suffers a closure problem, similar to the situation described above.

The discussion in this section highlights some of the challenges that must be addressed in the analysis of multi-layer neural networks. Such problems are not present in the analysis of single-layer neural networks. Therefore, a different approach is required for the asymptotic analysis of multi-layer neural networks and this paper is a first step in this direction.

The problems that we describe above led us to the approach used in this paper. In particular, the approach in Section \ref{ResultsIntro} first studies the limit of the empirical measure as the number of hidden units in the first layer grows to infinity. This is similar to \cite{NeuralNetworkLLN}. The limit is a solution to an evolution equation and it is the law of a system of random ODEs. We then make the crucial observation that one can characterize the resulting system in terms of the initialization for the stochastic gradient descent iterates. This means that we can reformulate the limiting system of the first layer into an equivalent system of random ODEs and then consider the limit of the second layer. This allows us to obtain the limit of the output of the neural network as the number of hidden units of all layers grow to infinity by studying the limit of the random ODE in  Theorem \ref{Theorem1}.

\section{Proof of Theorem \ref{Theorem1}-Characterization of the limit} \label{ProofSecondLayer}

In preparation for the proof of Theorem \ref{Theorem1}, we first re-express the result from Lemma \ref{LemmaFirstLayermaintext}.
\begin{corollary} \label{CorollaryFirstLayermaintext}
Consider the particle system:
\begin{eqnarray}
d C_t^i &=&  \int_{\mathcal{X} \times \mathcal{Y} } \big{(} y - g^{N_2}_t(x)  \big{)} H_t^{2,i}(x) \pi(dx,dy) dt, \phantom{.....} C^{i}_{0}=C^{i}_{\circ}, \phantom{....} i = 1, \ldots, N_2, \notag \\
 d W^{1}_t &=&  \int_{\mathcal{X} \times \mathcal{Y} } \big{(} y - g^{N_2}_t(x)  \big{)} \left( \frac{1}{N_2} \sum_{i=1}^{N_2} C_t^i \sigma'(Z_t^{i}(x) )  W^{2,i}_t \right) \sigma'(W^{1}_t \cdot x) x \pi(dx,dy)  dt, \phantom{.....} W^{1}_0 \sim \mu_{W^{1}}(dw), \notag \\
d W^{2,i}_{t} &=&   \int_{\mathcal{X} \times \mathcal{Y} } \big{(} y - g^{N_2}_t(x) \big{)} C_t^i \sigma'(Z_t^{i}(x) )  H_t^1(x) \pi(dx,dy)  dt, \phantom{.....} W^{2,i}_0 \sim \mu_{W^2}(du), \phantom{.....} i =1, \ldots, N_2, \notag \\
H_t^{1}(x) &=& \sigma ( W^{1}_t\cdot  x), \notag \\
Z_t^{i}(x) &=& \mathbb{E} \bigg{[} W^{2,i}_t  H^{1}_t(x) \bigg{|} C^1_{\circ}, \ldots, C^{N_2}_{\circ} \bigg{]}, \notag \\
H_t^{2,i}(x) &=& \sigma( Z_t^{i}(x) ), \notag \\
g^{N_2}_t(x) &=& \frac{1}{N_2} \sum_{i=1}^{N_2} C_t^i H_t^{2,i}(x).
\label{SystemLimitLayer1maintext}
\end{eqnarray}

Let $\nu_{t,(c_1, \ldots, c_{N_2})}$ be the conditional Law of $(W^{1}_t, W^{2,1}_t, \ldots W^{2,N_2}_t, C_t^{1}, \ldots, C_t^{N_2} )_{0 \leq t \leq T}$ given $(C^1_{0}, \ldots, C^{N_2}_{0}) = (c_1, \ldots, c_{N_2})$. Then, $\nu_{t,(C_{\circ}^1, \ldots, C_{\circ}^{N_2})}$ is the unique solution to the evolution equation (\ref{LimitSPDELLNmaintext})-(\ref{LimitSPDELLNmaintextb}).
\end{corollary}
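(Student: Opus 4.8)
The plan is to establish three facts: (i) the McKean--Vlasov-type system (\ref{SystemLimitLayer1maintext}) is well-posed; (ii) the conditional law $\nu_{t,(C_\circ^1,\dots,C_\circ^{N_2})}$ of its trajectory given $(C_\circ^1,\dots,C_\circ^{N_2})$ solves the evolution equation (\ref{LimitSPDELLNmaintext})--(\ref{LimitSPDELLNmaintextb}); and (iii) that evolution equation has at most one solution. Facts (ii) and (iii) give the corollary directly, and, combined with Lemma \ref{LemmaFirstLayermaintext}, they also identify $\gamma^{N_2}$ with $\nu_{t,(C_\circ^1,\dots,C_\circ^{N_2})}$.

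For (i), I would fix a realization of $(C_\circ^1,\dots,C_\circ^{N_2})$ and note that, conditionally on these values, the coefficients $Z_t^i(x)$ and $g_t^{N_2}(x)$ are deterministic functionals of $(t,x)$. I would run a Picard/fixed-point argument on the space $C([0,T]\times\mathcal{X};\mathbb{R}^{N_2+1})$ of candidate functionals $(t,x)\mapsto(Z_t^1(x),\dots,Z_t^{N_2}(x),g_t(x))$: given such a candidate, the ODEs for $(C_t^i,W_t^1,W_t^{2,i})$ have coefficients that are bounded and Lipschitz on the relevant compact sets (by $\sigma\in C^2_b$, the compact supports in Assumption \ref{A:Assumption1}, and a Grönwall bound keeping the weights in a fixed compact set on $[0,1]$), hence admit unique solutions depending measurably on the initializations $W_0^1\sim\mu_{W^1}$, $W_0^{2,i}\sim\mu_{W^2}$; recomputing $Z_t^i(x)=\mathbb{E}[W_t^{2,i}\sigma(W_t^1\cdot x)\mid C_\circ]$ and $g_t(x)=\frac1{N_2}\sum_i C_t^i\sigma(Z_t^i(x))$ yields a new candidate. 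Standard estimates show this map contracts in sup-norm for $T$ small, and concatenating over $[0,1]$ gives a unique solution, so $\nu_{t,(C_\circ^1,\dots,C_\circ^{N_2})}$ is well-defined.

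For (ii), I would apply the chain rule to $f(\Xi_t)$ with $\Xi_t=(W_t^1,W_t^{2,1},\dots,W_t^{2,N_2},C_t^1,\dots,C_t^{N_2})$ and $f\in C^2_b(\mathbb{R}^{d+2N_2})$, integrate in time, and take the conditional expectation given $C_\circ$: the $\nabla_c f$ term is produced by $dC_t^i$, the $\nabla_{w^2}f$ term by $dW_t^{2,i}$, and the $\nabla_{w^1}f$ term by $dW_t^1$. After conditioning, $Z_t^i(x)=\la w^{2,i}\sigma(w^1\cdot x),\nu_t\ra$ and $g_t^{N_2}(x)=\frac1{N_2}\sum_i\la c_i,\nu_t\ra\sigma(Z_t^i(x))$, and one checks term by term that the resulting identity is precisely (\ref{LimitSPDELLNmaintext}); the initial measure is the product measure (\ref{LimitSPDELLNmaintextb}) by construction, since $C_0^i=C_\circ^i$ deterministically while $W_0^1,W_0^{2,1},\dots,W_0^{2,N_2}$ are independent with laws $\mu_{W^1},\mu_{W^2},\dots,\mu_{W^2}$.

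For (iii), I would take two solutions $\gamma^{(1)},\gamma^{(2)}$ of (\ref{LimitSPDELLNmaintext})--(\ref{LimitSPDELLNmaintextb}) with the same initial condition and estimate a bounded-Lipschitz (equivalently, $1$-Wasserstein) distance between them as a function of $t$: the measure-dependent coefficients $g_s^{N_2}$ and $Z_s^{i,N_2}$ are Lipschitz in their measure argument with respect to this distance (again using $\sigma\in C^2_b$ and the compact supports), so testing (\ref{LimitSPDELLNmaintext}) against suitable $f$ together with Grönwall forces $\gamma^{(1)}_t=\gamma^{(2)}_t$ on $[0,1]$; equivalently, one shows any solution must be the pushforward of the initial product measure under the flow of (\ref{SystemLimitLayer1maintext}). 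I expect step (iii) to be the main obstacle, because the equation is nonlinear in $\gamma^{N_2}$ through $g_s^{N_2}$ and $Z_s^{i,N_2}$, so the Grönwall argument must simultaneously control the discrepancy of the trajectories and of the measure-dependent coefficients; the boundedness of $\sigma,\sigma',\sigma''$ and the compactness of the supports of the data and of the initializations are exactly what close these estimates.
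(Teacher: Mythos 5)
Your proposal is correct in substance and contains all three ingredients the paper uses (well-posedness of the particle system, identification of the conditional law with a solution of the evolution equation, uniqueness), but it routes the uniqueness step differently. The paper does not run a Gr\"onwall argument directly on two measure-valued weak solutions of (\ref{LimitSPDELLNmaintext}); instead it invokes the general correspondence of \cite{Kolokoltsov} to assert that \emph{any} weak solution of the evolution equation must be the (conditional) law of a solution of the particle system (\ref{SystemLimitLayer1maintext}), and then reduces uniqueness of the evolution equation to uniqueness of the particle system, which is your step (i). Your direct bounded-Lipschitz/Wasserstein contraction in step (iii) is a legitimate alternative, but it carries an extra burden the paper's route avoids: for an arbitrary weak solution one must first establish a priori that the marginals stay in a fixed compact set (or have uniformly bounded moments) before the coefficients $g_s^{N_2}$ and $Z_s^{i,N_2}$ — which pair the measure against the unbounded function $w^{2,i}\sigma(w^1\cdot x)$ — can be shown Lipschitz in the measure argument; the paper gets this for free because it only ever works with laws of the uniformly bounded particle trajectories. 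The other point where the paper spends real effort and you pass quickly is the identity $Z_t^i(x)=\mathbb{E}[W_t^{2,i}H_t^1(x)\mid C_\circ^1,\dots,C_\circ^{N_2}]=\la w^{2,i}\sigma(w^1\cdot x),\gamma_t^{N_2}\ra$: since the trajectories depend on the whole collection of initializations $(C_\circ^j)_j$ and $(W_0^{2,j})_j$ through $g_t^{N_2}$ and the averaged drift of $W_t^1$, the paper verifies this by an explicit tower-property computation exploiting the symmetric dependence, the independence of the initializations, and the fact that the $c$-marginal of $\gamma_0^{N_2}$ is a product of Dirac masses; in your framing this identity is essentially definitional because you define $\nu_t$ as the conditional law from the outset, which is a clean simplification rather than a gap.
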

\begin{proof}
See Appendix \ref{ProofFirstLayer}.
\end{proof}

Due to exchangeability properties and without loss of generality, we can transform (\ref{SystemLimitLayer1maintext}) into the equivalent particle system:

\begin{align}
d C_t^{C^{i}_{\circ}} &= \int_{\mathcal{X} \times \mathcal{Y} } \big{(} y - g^{N_2}_t(x)  \big{)} H_t^{2,C^{i}_{\circ}}(x) \pi(dx,dy)  dt, \quad C^{C^{i}_{\circ}}_{0}=C^{i}_{\circ}\phantom{......}
i = 1, \ldots, N_2, \notag \\
 d W_t^{1,W_0} &=  \int_{\mathcal{X} \times \mathcal{Y} } \big{(} y - g^{N_2}_t(x)  \big{)} V_t^{N_2, W_0} \sigma'( W_t^{1,W_0} \cdot x) x \pi(dx,dy)  dt,  \quad W_0^{1,W_0}=W_0\sim \mu_{W^{1}}(dw)\notag \\
d W^{2, C^{i}_{\circ}, W_0, W_0^{2, i}}_{t} &= \int_{\mathcal{X} \times \mathcal{Y} } \big{(} y - g^{N_2}_t(x) \big{)} C_t^{C^{i}_{\circ}} \sigma'(Z_t^{C^{i}_{\circ}}(x) )  H_t^{1,W_0}(x) \pi(dx,dy)  dt, \quad W^{2, C^{i}_{\circ}, W_0, W_0^{2, i}}_{0}=W_0^{2, i}\sim \mu_{W^{2}}(du), \notag \\
H_t^{1,W_0}(x) &= \sigma ( W_t^{1,W_0} \cdot x),  \notag \\
Z_t^{C^{i}_{\circ}}(x) &= \mathbb{E} \bigg{[}   W^{2,C^{i}_{\circ}, W_0, W_0^{2,i} }_t  H^{1, W_0}_t(x) \bigg{|} C^1_{\circ}, \ldots, C^{N_2}_{\circ} \bigg{]} , \phantom{.....} i = 1, \ldots, N_2,  \notag \\
H_t^{2,C^{i}_{\circ}}(x) &= \sigma( Z_t^{C^{i}_{\circ}}(x) ),  \phantom{.....} i =1, \ldots, N_2, \notag \\
g^{N_2}_t(x) &= \frac{1}{N_2} \sum_{i=1}^{N_2} C_t^{C^{i}_{\circ} } H_t^{2,C^{i}_{\circ}}(x), \notag \\
V_t^{N_2,W_0}(x) &= \frac{1}{N_2}  \sum_{i=1}^{N_2}  C_t^{C^{i}_{\circ}} \sigma'(Z_t^{C^{i}_{\circ} }(x) )  W^{2,C^{i}_{\circ}, W_0, W_0^{2,i} }_t.
 \label{TransformedSystemExchangeable0000}
\end{align}

Since for all $i=1,\cdots, N_2$, $C^{i}_{\circ}$ have probability density function as described in Assumption \ref{A:Assumption1}, we have that
\begin{eqnarray}
\mathbb{P} [ \{C^{i}_{\circ} \neq C_{\circ}^j \}_{i \neq j, (i,j) = 1,2, \ldots N_2 } ] = 1.\nonumber
\end{eqnarray}

This allows us to substitute the variable names
\begin{align*}
(\hat C_t^i,\hat W_t^{1}, \hat W_t^{2,i}, \hat Z_t^i, \hat H^1_t, \hat H_t^{2,i})& \textrm{ for } (C_t^{C^{i}_{\circ}}, W_t^{1,W_0}, W^{2, C^{i}_{\circ}, W_0, W_0^{2, i}}_{t}, Z_t^{C^{i}_{\circ}}, H^{1, W_0}_t, H_t^{2, C^{i}_{\circ}}),\notag
\end{align*}
for $i =1, \ldots, N_2$.

This produces the system:
\begin{eqnarray}
d \hat C_t^i &=& \int_{\mathcal{X} \times \mathcal{Y} } \big{(} y - g^{N_2}_t(x)  \big{)} \hat H_t^{2,i}(x) \pi(dx,dy)  dt, \phantom{.....} \hat C^{i}_{0} = C^{i}_{\circ}, \phantom{......}
i = 1, \ldots, N_2, \notag \\
 d \hat W^{1}_t &=&  \int_{\mathcal{X} \times \mathcal{Y} } \big{(} y - g^{N_2}_t(x)  \big{)} V_t^{N_2, W_0} \sigma'( \hat W^{1}_t \cdot x) x \pi(dx,dy)  dt, \phantom{.....} \hat W^{1}_0 = W_0\sim \mu_{W^{1}}(dw),  \notag \\
d \hat W_t^{2,i} &=&  \int_{\mathcal{X} \times \mathcal{Y} } \big{(} y - g^{N_2}_t(x) \big{)} \hat C_t^i \sigma'(\hat Z_t^i(x))  \hat H_t^{1}(x) \pi(dx,dy)  dt, \phantom{.....} \hat W_0^{2,i} = W_0^{2,i}\sim \mu_{W^{2}}(du), \notag \\
\hat H^1_t(x) &=& \sigma ( \hat W^{1}_t\cdot  x),  \notag \\
\hat Z_t^i(x) &=& \mathbb{E} \bigg{[}   \hat W_t^{2,i} \hat  H^1_t(x) \bigg{|} C^1_{\circ}, \ldots, C^{N_2}_{\circ} \bigg{]} , \phantom{.....} i = 1, \ldots, N_2,  \notag \\
\hat H_t^{2,i}(x) &=& \sigma( \hat Z_t^{i}(x) ),  \phantom{.....} i =1, \ldots, N_2, \notag \\
g^{N_2}_t(x) &=& \frac{1}{N_2} \sum_{i=1}^{N_2} \hat C_t^i \hat H_t^{2,i}(x), \notag \\
V_t^{N_2,W_0}(x) &=&  \frac{1}{N_2}  \sum_{i=1}^{N_2} \hat C_t^i  \sigma'(\hat Z_t^i (x) )  \hat W_t^{2,i} .
 \label{TransformedSystemExchangeable0000HAT}
\end{eqnarray}

The system (\ref{TransformedSystemExchangeable0000HAT}) is exactly the same system as (\ref{SystemLimitLayer1maintext}).
 Notice also that $\hat C_{t}^{i}$ in (\ref{TransformedSystemExchangeable0000HAT}) depends also on $\{C_{\circ}^{i}\}_{i=1}^{N_2}$ in a symmetric way via $g_{t}^{N_2}(x)$. Similarly $\hat W_{t}^{1}$ and $\hat W_{t}^{2,i}$ depend also on $\{C_{\circ}^{i}\}_{i=1}^{N_2}$ and on $\{W_{0}^{2,i}\}_{i=1}^{N_2}$ symmetrically via $g_{t}^{N_2}(x)$ and $V_t^{N_2,W_0}(x)$. This is also the situation for (\ref{SystemLimitLayer1maintext}).  Then independence and identical distribution of the initial conditions together with the aforementioned exchangeability property imply that  (\ref{SystemLimitLayer1maintext}) and (\ref{TransformedSystemExchangeable0000}) are equivalent.

\subsection{Limiting System} \label{LimitingSystem}

The goal is to prove that for any $t \in [0,1]$ and $x \in \mathcal{X}$,
\begin{eqnarray}
&\phantom{.}& \lim_{N_2 \rightarrow \infty}  g_{t}^{N_2}(x)   = g_t(x), \notag
\end{eqnarray}
in $L^{1}$, where $g_{t}^{N_2}(x)$ is from (\ref{TransformedSystemExchangeable0000}) and the limit $g_t(x)$ is given by
\[
g_t(x) =\int_{\mathcal{C}}  \tilde C_t^{c} \tilde H_t^{2,c}(x) \mu_c(dc),
\]
where,
\begin{eqnarray}
d \tilde C_t^{c} &=&  \int_{\mathcal{X} \times \mathcal{Y} } \big{(} y -  g_t(x)  \big{)} \tilde H_t^{2,c}(x) \pi(dx,dy)  dt, \phantom{....} %\notag \\
\tilde C_0^{c} = c, \notag \\
 d \tilde W_t^{1,w} &=&  \int_{\mathcal{X} \times \mathcal{Y} } \big{(} y - g_t(x)  \big{)} V_t^{w} \sigma'(\tilde{W}_t^{1,w} \cdot x) x \pi(dx,dy)   dt, \phantom{....}
 \tilde W_0^{1,w} = w, \notag \\
d \tilde W^{2,c,w,u}_{t} &=&  \int_{\mathcal{X} \times \mathcal{Y} } \big{(} y - g_t(x) \big{)} \tilde C_t^{c} \sigma'(\tilde Z_t^{c}(x) )  \tilde H_t^{1,w}(x) \pi(dx,dy)  dt,  \phantom{......} \tilde W^{2,c,w,u}_0 = u,  \notag \\
\tilde H_t^{1,w}(x) &=& \sigma ( \tilde W_t^{1,w} \cdot x), \notag \\
%\tilde Z_t^{c}(x) &=& \int_{\mathcal{W}^{1}}  V_t^{c,w}  \tilde H_t^{1,w}(x)  \mu_w(dw) , \notag \\
\tilde Z_t^{c}(x) &=&  \int_{\mathcal{W}^{1}} \int_{\mathcal{W}^2}   \tilde  W^{2,c,w,u }_t  \tilde H^{1,w}_t(x)  \mu_{W^2}(du) \mu_{W^{1}}(dw) , \notag \\
\tilde H_t^{2,c}(x) &=& \sigma( \tilde Z_t^{c}(x) ), \notag \\
%g_t(x) &=& \int_{\mathcal{C}}  \tilde C_t^{c} \tilde H_t^{2,c}(x) \mu_c(dc), \notag \\
V_t^{w}(x) &=& \int_{\mathcal{C}}  \tilde C_t^c  \sigma'(\tilde Z_t^{c}(x) ) \left( \int_{\mathcal{W}^2}  \tilde W^{2,c,w,u}_t   \mu_{W^2}(du) \right) \mu_c(dc).
\label{Eq:LLN_DNN}
\end{eqnarray}

Before presenting the proof of this result, let us define a quantity that will be of central interest in the sequel. In particular, for $(c,w, u) \in \{(C_{\circ}^{i}, W^{1}_{0},W^{2,i}_{0}), i=1,\cdots N_{2}\}$ and $x\in \mathcal{X}$, let's define the error function
\begin{eqnarray}
E_t^{N_2}(c,w,u,x) & \vcentcolon =& ( C_t^c - \tilde C_t^c )^2 + \norm{ W_t^{1,w} - \tilde W_t^{1,w} }^2   + ( W_t^{2,c,w,u} - \tilde W_t^{2,c,w,u} )^2 \notag \\
& &\quad+   ( H_t^{2,c}(x) - \tilde H_t^{2,c}(x) )^2 + ( Z_t^c(x) - \tilde Z_t^{c}(x) )^2.\notag
\end{eqnarray}

Note that we certainly have,
\begin{eqnarray}
| C_0^c - \tilde C_0^c |^2 + \norm{W_0^{1,w} - \tilde W_0^{1,w} }^2 + | W_0^{2,c,w,u} - \tilde W_0^{2,c,w,u} |^2 + | Z_0^c(x) - \tilde  Z_0^c(x) |^2  = 0.\notag
\end{eqnarray}

We will show below that $\mathbb{E}\left[E_t^{N_2}(C^{i}_{\circ},W_{0},W_{0}^{2,i},x)\right]$ appropriately converges to zero as $N_2\rightarrow\infty$ which will then imply that $g_{t}^{N_2}(x)$ converges to $g_{t}(x)$ as indicated.

\subsection{A Priori Bounds}  \label{AprioriBoundsSecondLayer}
Let us first establish uniform bounds on the processes $C_t^c, W_t^{1,w}, W_t^{2,c,w,u},$ and $g_t^{N_2}(x)$ for the system (\ref{TransformedSystemExchangeable0000}). For any $t \in [0,T]$ and any $N_2 \in \mathbb{N}$,
\begin{eqnarray}
\frac{1}{N_2} \sum_{i=1}^{N_2} | C_t^{C^{i}_{\circ}} | \leq \frac{1}{N_2} \sum_{i=1}^{N_2}  | C_0^{C^{i}_{\circ}} | +  \int_0^t \int_{\mathcal{X} \times \mathcal{Y} } \big{|} y - g^{N_2}_s(x)  \big{|}  \frac{1}{N_2} \sum_{i=1}^{N_2} | H_s^{2,C^{i}_{\circ}}(x) | \pi(dx,dy)  ds.\nonumber
\end{eqnarray}

$| H_s^{2,C^{i}_{\circ}}(x) | < K$ since $\sigma(\cdot) \in C_b^{2}$. Then, using the fact that $\mathcal{X} \times \mathcal{Y}$ is compact,
\begin{eqnarray}
\frac{1}{N_2} \sum_{i=1}^{N_2} | C_t^{C^{i}_{\circ}} | \leq \frac{1}{N_2} \sum_{i=1}^{N_2} | C_0^{C^{i}_{\circ}}  | + K_1 t + K_2 \int_0^t  \frac{1}{N_2} \sum_{i=1}^{N_2} | C_s^{C^{i}_{\circ}} | ds.\nonumber
\end{eqnarray}

By Gronwall's inequality, we have that for any $N_2 \in \mathbb{N}$ and for any $t \in [0,T]$,
\begin{eqnarray}
\frac{1}{N_2} \sum_{i=1}^{N_2} | C_t^{C^{i}_{\circ}} | \leq K.\nonumber
\end{eqnarray}

Using the same approach, we can establish uniform bounds on the other processes $W_t^{1,w}$ and $W_t^{2,c,w,u}$. Therefore, for any $N_2 \in \mathbb{N}$, $t \in [0,1]$, $x \in \mathcal{X}$, and $(c,w, u) \in \{(C_{\circ}^{i}, W^{1}_{0},W^{2,i}_{0}), i=1,\cdots N_{2}\}$,
\begin{eqnarray}
\max\{| g_t^{N_2}(x) |,| C_t^c |,| W_t^{2,c,w,u} |,| V_t^{N_2,w}(x) |,| W_t^{1,w} |\} &\leq& K, \label{Eq:UniformBounds}
\end{eqnarray}

\subsection{Bound for $\mathbb{E}| V_t^{N_2,W_{0}}(x) - V_t^{W_{0}}(x) |^{2}$.}
We have:
\begin{align}
| V_t^{N_2, W_{0}}(x) - V_t^{W_{0}}(x) | &\leq \bigg{|}   \frac{1}{N_2}  \sum_{i=1}^{N_2}  C_t^{C^{i}_{\circ}} \sigma'(Z_t^{C^{i}_{\circ} }(x) )  W^{2,C^{i}_{\circ}, W_{0}, W_0^{2,i}}_t  - \int_{\mathcal{C}}  \tilde C_t^c  \sigma'(\tilde Z_t^{c}(x) ) \big{(} \int_{\mathcal{W}^2}  \tilde W^{2,c,W_{0},u}_t   \mu_{W^2}(du) \big{)} \mu_c(dc) \bigg{|} \notag \\
&=  \bigg{|} \frac{1}{N_2}  \sum_{i=1}^{N_2}  C_t^{C^{i}_{\circ}} \sigma'(Z_t^{C^{i}_{\circ} }(x) )  W^{2,C^{i}_{\circ}, W_{0}, W_0^{2,i}}_t -  \frac{1}{N_2}  \sum_{i=1}^{N_2}  \tilde C_t^{C^{i}_{\circ}} \sigma'(\tilde Z_t^{C^{i}_{\circ} }(x) )  \tilde W^{2,C^{i}_{\circ},W_{0}, W_0^{2,i} }_t \bigg{|} \notag \\
&+ \bigg{|} \frac{1}{N_2}  \sum_{i=1}^{N_2}  \tilde C_t^{C^{i}_{\circ}} \sigma'(\tilde Z_t^{C^{i}_{\circ} }(x) )  \tilde W^{2,C^{i}_{\circ},W_{0}, W_0^{2,i} }_t - \int_{\mathcal{C}}  \tilde C_t^c  \sigma'(\tilde Z_t^{c}(x) ) \big{(} \int_{\mathcal{W}^2}  \tilde W^{2,c,W_{0},u}_t   \mu_{W^2}(du) \big{)} \mu_c(dc) \bigg{|} \notag \\
&\vcentcolon = | \Gamma_t^{V,1,W_{0}} | +  | \Gamma_t^{V,2,W_{0}} | .
\label{Vbound}
\end{align}

The first term in (\ref{Vbound}) can be studied using a decomposition:
\begin{align}
\bigg{|} \Gamma_t^{V,1,W_{0}} \bigg{|} &=  \bigg{|} \frac{1}{N_2}  \sum_{i=1}^{N_2}  C_t^{C^{i}_{\circ}} \sigma'(Z_t^{C^{i}_{\circ} }(x) )  W^{2,C^{i}_{\circ}, W_{0}, W_0^{2,i}}_t -  \frac{1}{N_2}  \sum_{i=1}^{N_2}  \tilde C_t^{C^{i}_{\circ}} \sigma'(\tilde Z_t^{C^{i}_{\circ} }(x) )  \tilde W^{2,C^{i}_{\circ},W_{0}, W_0^{2,i} }_t \bigg{|} \notag \\
   &=   \bigg{|} \frac{1}{N_2} \sum_{i=1}^{N_2} \bigg{[} \bigg{(} C_t^{C^{i}_{\circ}}  - \tilde C_t^{C^{i}_{\circ}}  \bigg{)}  \sigma'(Z_t^{C^{i}_{\circ} }(x) )  W^{2,C^{i}_{\circ},W_{0}, W_0^{2,i}}_t\nonumber\\
& \qquad
 +   \tilde C_t^{C^{i}_{\circ}}  \bigg{(}  \sigma'(Z_t^{C^{i}_{\circ} }(x) )  W^{2,C^{i}_{\circ},W_{0}, W_0^{2,i}}_t -  \sigma'(\tilde Z_t^{C^{i}_{\circ} }(x) )  \tilde W^{2,C^{i}_{\circ},W_{0}, W_0^{2,i}}_t  \bigg{)}    \bigg{]} \bigg{|} \notag \\
 &= \bigg{|} \frac{1}{N_2} \sum_{i=1}^{N_2} \bigg{[} \bigg{(} C_t^{C^{i}_{\circ}}  - \tilde C_t^{C^{i}_{\circ}}  \bigg{)}  \sigma'(Z_t^{C^{i}_{\circ} }(x) )  W^{2,C^{i}_{\circ},W_{0}, W_0^{2,i}}_t
 +   \tilde C_t^{C^{i}_{\circ}}  \bigg{(}  \sigma'(Z_t^{C^{i}_{\circ} }(x) ) - \sigma'(\tilde Z_t^{C^{i}_{\circ} }(x) ) \bigg{)}  W^{2,C^{i}_{\circ},W_{0}, W_0^{2,i}}_t \notag \\
 & +   \tilde C_t^{C^{i}_{\circ}}   \sigma'(\tilde Z_t^{C^{i}_{\circ} }(x) ) \bigg{(}  W^{2,C^{i}_{\circ},W_{0}, W_0^{2,i}}_t  -  \tilde W^{2,C^{i}_{\circ},W_{0}, W_0^{2,i}}_t   \bigg{)} \bigg{]} \bigg{|} \notag \\
 &\leq \frac{K}{N_2} \sum_{i=1}^{N_2} \bigg{[} \big{|} C_t^{C^{i}_{\circ}}  - \tilde C_t^{C^{i}_{\circ}}  \big{|}   + \big{|} Z_t^{C^{i}_{\circ} }(x) - \tilde Z_t^{C^{i}_{\circ} }(x)  \big{|} +   \big{|}  W^{2,C^{i}_{\circ},W_{0}, W_0^{2,i}}_t  -  \tilde W^{2,C^{i}_{\circ},W_{0}, W_0^{2,i}}_t   \big{|} \bigg{]} \notag \\
 &\leq \frac{K}{N_2} \sum_{i=1}^{N_2}  \sqrt{ E_t^{N_2}(C^{i}_{\circ},W_{0}, W_0^{2,i},x)  }.\notag
\end{align}
where the uniform bounds from (\ref{Eq:UniformBounds}) were used.
 In addition, we also have for some constant $K<\infty$
\begin{align}
\phantom{.} \mathbb{E} \bigg{[}  \bigg{(}  \Gamma_t^{V,2,W_0} \bigg{)}^2 \bigg{]} &= \mathbb{E} \bigg{[}   \bigg{(} \frac{1}{N_2}  \sum_{i=1}^{N_2}  \tilde C_t^{C^{i}_{\circ}} \sigma'(\tilde Z_t^{C^{i}_{\circ} }(x) )  \tilde W^{2,C^{i}_{\circ},W_0, W_0^{2,i} }_t - \int_{\mathcal{C}}  \tilde C_t^c  \sigma'(\tilde Z_t^{c} (x)) \big{(} \int_{\mathcal{W}^2}  \tilde W^{2,c,W_0,u}_t   \mu_{W^2}(du) \big{)} \mu_c(dc) \bigg{)}^2 \bigg{]} \notag \\
&= \frac{1}{N_2^2} \sum_{i=1}^{N_2} \textrm{Var} \big{[}\tilde{C}_t^{C^{i}_{\circ}} \sigma'(\tilde Z_t^{C^{i}_{\circ} }(x) )  \tilde W^{2,C^{i}_{\circ},W_0, W_0^{2,i} }_t  \big{]}\notag \\
& \leq\frac{K }{N_2}.\notag
\end{align}
where we used the assumed independence of $C^{i}_{\circ}$, $W_0$, and $W_0^{2,i}$ as well as the a-priori bound from (\ref{Eq:UniformBounds}).

Hence, we obtain that for some unimportant constant $K<\infty$
\begin{align}
\mathbb{E}| V_t^{N_2, W_{0}}(x) - V_t^{W_{0}}(x) |^{2} &\leq \frac{K}{N_2} \sum_{i=1}^{N_2}  \mathbb{E}\left[ E_t^{N_2}(C^{i}_{\circ},W_{0}, W_0^{2,i},x)\right]  +\frac{K }{N_2}.\label{Eq:BoundV}
\end{align}

\subsection{Bound for $| g_t^{N_2}(x) - g_t(x) |$.}
We can write
\begin{eqnarray}
 | g_t^{N_2}(x) - g_t(x)  | &=& \bigg{|}  \frac{1}{N_2} \sum_{i=1}^{N_2} C_t^{C^{i}_{\circ}} H_t^{2,C^{i}_{\circ}}(x)  - \int_{\mathcal{C}}  \tilde C_t^{c} \tilde H_t^{2,c}(x) \mu_c(dc)  \bigg{|} \notag \\
&\leq&  \bigg{|}  \frac{1}{N_2} \sum_{i=1}^{N_2} C_t^{C^{i}_{\circ}} H_t^{2,C^{i}_{\circ}}(x)  - \frac{1}{N_2} \sum_{i=1}^{N_2} \tilde C_t^{C^{i}_{\circ}} \tilde H_t^{2,C^{i}_{\circ}}(x)   \bigg{|}  \notag \\
& &+ \bigg{|} \frac{1}{N_2} \sum_{i=1}^{N_2} \tilde C_t^{C^{i}_{\circ}} \tilde H_t^{2,C^{i}_{\circ}}(x)    - \int_{\mathcal{C}}  \tilde C_t^{c} \tilde H_t^{2,c}(x) \mu_c(dc)  \bigg{|} \notag \\
&\vcentcolon =& \Gamma_t^{g,1}(x) + \Gamma_t^{g,2}(x).
\label{Gbound}
\end{eqnarray}

Let's analyze the first term in (\ref{Gbound}). Using the uniform bounds from (\ref{Eq:UniformBounds}) we have for some unimportant constant $K<\infty$
\begin{eqnarray}
\bigg{|} \Gamma_t^{g,1}(x) \bigg{|} &=& \bigg{|}  \frac{1}{N_2} \sum_{i=1}^{N_2} C_t^{C^{i}_{\circ}} H_t^{2,C^{i}_{\circ}}(x)  - \frac{1}{N_2} \sum_{i=1}^{N_2} \tilde C_t^{C^{i}_{\circ}} \tilde H_t^{2,C^{i}_{\circ}}(x)   \bigg{|}    \notag \\
&=& \bigg{|} \frac{1}{N_2} \sum_{i=1}^{N_2} (C_t^{C^{i}_{\circ}} - \tilde C_t^{C^{i}_{\circ}} ) H_t^{2,C^{i}_{\circ}}(x)  +  \frac{1}{N_2} \sum_{i=1}^{N_2} \tilde C_t^{C^{i}_{\circ}} (  H_t^{2,C^{i}_{\circ}}(x) - \tilde H_t^{2,C^{i}_{\circ}}(x) ) \bigg{|}  \notag \\
&\leq&  \frac{K}{N_2} \sum_{i=1}^{N_2} \bigg{[} | C_t^{C^{i}_{\circ}} - \tilde C_t^{C^{i}_{\circ}} | + | H_t^{2,C^{i}_{\circ}}(x) - \tilde H_t^{2,C^{i}_{\circ}}(x) | \bigg{]} \notag \\
&\leq&  \frac{K}{N_2} \sum_{i=1}^{N_2}  \sqrt{ E_t^{N_2}(C^{i}_{\circ},W_{0},W_{0}^{2,i},x)  }.\nonumber
\end{eqnarray}

The second term in (\ref{Gbound}) is bounded, as follows,
\begin{align}
\mathbb{E} \bigg{[} \bigg{(}  \Gamma_t^{g,2}(x) \bigg{)}^2 \bigg{]} &= \mathbb{E} \bigg{[} \bigg{(} \frac{1}{N_2} \sum_{i=1}^{N_2} \big{(} \tilde C_t^{C^{i}_{\circ}} \tilde H_t^{2,C^{i}_{\circ}}(x)    - \int_{\mathcal{C}}  \tilde C_t^{c} \tilde H_t^{2,c}(x) \mu_c(dc)  \big{)} \bigg{)}^2 \bigg{]} =  \frac{1}{N_2^2} \sum_{i=1}^{N_2} \textrm{Var} \big{[} \tilde C_t^{C^{i}_{\circ}} \tilde H_t^{2,C^{i}_{\circ}}(x)  \big{]}  \leq \frac{K}{N_2},
\label{Gammag2Bound}
\end{align}
where the independence of $C^{i}_{\circ}$ was used. Hence, putting things together we get
\begin{align}
 \mathbb{E}| g_t^{N_2}(x) - g_t(x)  |&\leq \mathbb{E}\left[\frac{K}{N_2} \sum_{i=1}^{N_2}  \sqrt{ E_t^{N_2}(C^{i}_{\circ},W_{0},W_{0}^{2,i},x)  }\right]+\frac{K}{\sqrt{N_2}}\nonumber.
\end{align}

\subsection{Bound for $\mathbb{E} \bigg{[}( C_t^{C_{\circ}^{i}} - \tilde C_t^{C_{\circ}^{i}} )^2+( W_t^{1,W_{0}} - \tilde W_t^{1,W_{0}} )^2+( W_t^{2,C^{i}_{\circ},W_{0},W_{0}^{2,i}} - \tilde W_t^{2,C^{i}_{\circ},W_{0},W_{0}^{2,i}} )^2\bigg{]} $.}

Let us write for notational convenience $c={C_{\circ}^{i}}$. We have
\begin{align}
d ( C_t^c - \tilde C_t^c )^2 &=  2( C_t^c - \tilde C_t^c ) \int_{\mathcal{X} \times \mathcal{Y} } \bigg{[} \big{(} y -  g_t^{N_2}(x)  \big{)} \big{(} H_t^{2,c}(x) - \tilde H_t^{2,c}(x) \big{)}  + (g_t(x) - g_t^{N_2}(x)  )  \tilde H_t^{2,c}(x)  \bigg{]} \pi(dx,dy)  dt.  \notag
\end{align}

Using Young's inequality and the uniform bounds from (\ref{Eq:UniformBounds}),
\begin{align}
 ( C_t^c - \tilde C_t^c )^2 &\leq  ( C_0^c - \tilde C_0^c )^2 + K  \int_0^t  \int_{\mathcal{X} \times \mathcal{Y} } \pi(dx,dy) \bigg{[} ( C_s^c - \tilde C_s^c )^2 + \big{(} H_s^{2,c}(x) - \tilde H_s^{2,c}(x) \big{)}^2 \notag \\
 & \quad+  | C_s^c - \tilde C_s^c | \cdot | g_s(x) - g_s^{N_2}(x)  | \bigg{]} ds \notag \\
 &= K  \int_0^t  \int_{\mathcal{X} \times \mathcal{Y} } \pi(dx,dy) \bigg{[} ( C_s^c - \tilde C_s^c )^2 + \big{(} H_s^{2,c}(x) - \tilde H_s^{2,c}(x) \big{)}^2 +  | C_s^c - \tilde C_s^c | ( \Gamma_s^{g,1}(x) + \Gamma_s^{g,2}(x) ) \bigg{]} ds \notag \\
  &= K  \int_0^t  \int_{\mathcal{X} \times \mathcal{Y} } \pi(dx,dy) \bigg{[} ( C_s^c - \tilde C_s^c )^2 + \big{(} H_s^{2,c}(x) - \tilde H_s^{2,c}(x) \big{)}^2 +  | C_s^c - \tilde C_s^c |  \Gamma_s^{g,1}(x) +  \big{(} \Gamma_s^{g,2}(x) \big{)}^2 \bigg{]} ds. \notag\\
  \label{Cbound0011A}
\end{align}

For the term $ \displaystyle  | C_s^c - \tilde C_s^c |  \Gamma_s^{g,1}(x)$, where we recall $c=C_{\circ}^{i}$, we have
\begin{eqnarray}
| C_s^{C_{\circ}^{i}} - \tilde C_s^{C_{\circ}^{i}} |  \Gamma_s^{g,1}(x)   &\leq&  | C_s^{C_{\circ}^{i}} - \tilde C_s^{C_{\circ}^{i}} |   \frac{K}{N_2} \sum_{j=1}^{N_2}  \sqrt{ E_s^{N_2}(C_{\circ}^{j},W_{0},W_{0}^{2,j},x)  }  \notag \\
&=& \frac{K}{N_2} \sum_{j=1}^{N_2}    | C_s^{C_{\circ}^{i}} - \tilde C_s^{C_{\circ}^{i}} | \sqrt{ E_s^{N_2}(C_{\circ}^{j},W_{0},W_{0}^{2,j},x)  }   \notag \\
&\leq& K  E_s^{N_2}(C^{i}_{\circ},W_{0},W_{0}^{2,i},x)  +  \frac{K}{N_2} \sum_{j=1}^{N_2}   E_s^{N_2}(C_{\circ}^{j},W_{0},W_{0}^{2,j},x)    \notag
\end{eqnarray}

Therefore, using (\ref{Gammag2Bound}) and (\ref{Cbound0011A}) we obtain
\begin{align}
\mathbb{E} \bigg{[}  ( C_t^{C_{\circ}^{i}} - \tilde C_t^{C_{\circ}^{i}} )^2 \bigg{]} &\leq  \frac{K_1}{N_2} +  K_2 \int_0^t  \sup_{x \in \mathcal{X}} \mathbb{E} \bigg{[}   E_s^{N_2}(C^{i}_{\circ},W_{0},W_{0}^{2,i},x) \bigg{]} ds\notag\\
& +\frac{K_{3}}{N_{2}}\sum_{j=1}^{N_{2}} \int_0^t  \sup_{x \in \mathcal{X}} \mathbb{E} \bigg{[}   E_s^{N_2}(C_{\circ}^{j},W_{0},W_{0}^{2,j},x) \bigg{]} ds\nonumber
\end{align}

Using similar arguments, we can also show, using (\ref{Eq:BoundV}), that
\begin{align}
\mathbb{E} \bigg{[}  ( W_t^{1,W_{0}} - \tilde W_t^{1,W_{0}} )^2 \bigg{]} &\leq \frac{K_1}{N_2} + K_2 \int_0^t  \sup_{x \in \mathcal{X}} \mathbb{E} \left[   E_s^{N_2}(C^{i}_{\circ},W_{0},W_{0}^{2,i},x) \right]ds\nonumber\\
&\qquad\qquad+ \frac{K_{3}}{N_{2}}\sum_{j=1}^{N_{2}} \int_0^t  \sup_{x \in \mathcal{X}} \mathbb{E} \left[  E_s^{N_2}(C_{\circ}^{j},W_{0},W_{0}^{2,j},x)  \right] ds\nonumber\\
 \mathbb{E}  \bigg{[} ( W_t^{2,C^{i}_{\circ},W_{0},W_{0}^{2,i}} - \tilde W_t^{2,C^{i}_{\circ},W_{0},W_{0}^{2,i}} )^2 \bigg{]} &\leq \frac{K_1}{N_2}+ K_{2} \int_0^t  \sup_{x \in \mathcal{X}} \mathbb{E}\left[  E_s^{N_2}(C^{i}_{\circ},W_{0},W_{0}^{2,i},x)\right] ds\nonumber\\
 &\qquad\qquad+\frac{K_{3}}{N_{2}}\sum_{j=1}^{N_{2}} \int_0^t  \sup_{x \in \mathcal{X}} \mathbb{E}\left[   E_s^{N_2}(C_{\circ}^{j},W_{0},W_{0}^{2,j},x)  \right] ds
\label{WTildeBounds0}
\end{align}

Therefore, we overall get that
\begin{align}
&\mathbb{E}\left[( C_t^{C_{\circ}^{i}} - \tilde C_t^{C_{\circ}^{i}} )^2+( W_t^{1,W_{0}} - \tilde W_t^{1,W_{0}} )^2+( W_t^{2,C^{i}_{\circ},W_{0},W_{0}^{2,i}} - \tilde W_t^{2,C^{i}_{\circ},W_{0},W_{0}^{2,i}} )^2\right]\leq \nonumber\\
&\quad\leq\frac{K_1}{N_2} + K_2 \int_0^t  \sup_{x \in \mathcal{X}} \mathbb{E} \left[   E_s^{N_2}(C^{i}_{\circ},W_{0},W_{0}^{2,i},x) \right]ds+ \frac{K_{3}}{N_{2}}\sum_{j=1}^{N_{2}} \int_0^t  \sup_{x \in \mathcal{X}} \mathbb{E} \left[  E_s^{N_2}(C_{\circ}^{j},W_{0},W_{0}^{2,j},x)  \right] ds\label{Eq:Bound1}
\end{align}

\subsection{Bound for $\mathbb{E} \bigg{[}(Z_t^{C_{\circ}^{i}}(x) - \tilde Z_t^{C_{\circ}^{i}}(x) )^2 \bigg{]}$.}

We next consider $(Z_t^{C_{\circ}^{i}}(x) - \tilde Z_t^{C_{\circ}^{i}}(x))^2$. For the following calculations, we define $\mathcal{F}_C^{N_2} = ( C^1_{\circ}, C_{\circ}^2, \ldots, C^{N_2}_{\circ})$.

For $i =1, 2, \ldots, N_2$, we have
\begin{align}
( Z_t^{C^{i}_{\circ}}(x)- \tilde Z_t^{C^{i}_{\circ}}(x) )^2 &= \bigg{(}    \mathbb{E} \bigg{[}   W^{2,C^{i}_{\circ},W_{0}, W_0^{2,i} }_t  H^{1,W_{0}}_t(x) \bigg{|} \mathcal{F}_C^{N_2} \bigg{]}  -    \int_{\mathcal{W}^{1}} \int_{\mathcal{W}^2}   \tilde  W^{2,C^{i}_{\circ},w,u }_t  \tilde H^{1,w}_t(x)  \mu_{W^2}(du) \mu_{W^{1}}(dw) \bigg{)}^2  \notag \\
&= \bigg{(}    \mathbb{E} \bigg{[}  ( W^{2,C^{i}_{\circ},W_{0}, W_0^{2,i} }_t -   \tilde  W^{2,C^{i}_{\circ},W_{0}, W_0^{2,i} }_t  )  H^{1,W_{0}}_t(x) \bigg{|} \mathcal{F}_C^{N_2} \bigg{]}  \notag \\
&+  \mathbb{E} \bigg{[}   \tilde W^{2,C^{i}_{\circ},W_{0}, W_0^{2,i} }_t  H^{1,W_{0}}_t(x) \bigg{|} \mathcal{F}_C^{N_2} \bigg{]} -    \int_{\mathcal{W}^{1}} \int_{\mathcal{W}^2}   \tilde  W^{2,C^{i}_{\circ},w,u }_t  \tilde H^{1,w}_t(x)  \mu_{W^2}(du) \mu_{W^{1}}(dw) \bigg{)}^2 \notag \\
&\leq    \mathbb{E} \bigg{[}  ( W^{2,C^{i}_{\circ},W_{0}, W_0^{2,i} }_t -   \tilde  W^{2,C^{i}_{\circ},W_{0}, W_0^{2,i} }_t  )^2 \bigg{|} \mathcal{F}_C^{N_2} \bigg{]} \notag \\
&+  \left(\mathbb{E} \bigg{[}   \big{[}  \tilde W^{2,C^{i}_{\circ},W_{0}, W_0^{2,i} }_t  H^{1,W_0}_t(x) -   \int_{\mathcal{W}^{1}} \int_{\mathcal{W}^2} \tilde  W^{2,C^{i}_{\circ},w,u }_t  \tilde H^{1,w}_t(x)  \mu_{W^2}(du) \mu_{W^{1}}(dw)  \big{]} \bigg{|} \mathcal{F}_C^{N_2} \bigg{]}\right)^2 \notag \\
&\leq \mathbb{E} \bigg{[}  ( W^{2,C^{i}_{\circ},W_{0}, W_0^{2,i} }_t -   \tilde  W^{2,C^{i}_{\circ},W_{0}, W_0^{2,i} }_t  )^2 \bigg{|} \mathcal{F}_C^{N_2} \bigg{]} \notag \\
&+  \left(\mathbb{E} \bigg{[}   \big{[}  \tilde W^{2,C^{i}_{\circ},W_{0}, W_0^{2,i} }_t  H^{1,W_{0}}_t(x) -   \tilde  W^{2,C^{i}_{\circ},W_{0},W_0^{2,i} }_t  \tilde H^{1,W_{0}}_t(x)  \big{]} \bigg{|} \mathcal{F}_C^{N_2} \bigg{]}\right)^2 \notag \\
&\leq \mathbb{E} \bigg{[}  ( W^{2,C^{i}_{\circ},W_{0}, W_0^{2,i} }_t -   \tilde  W^{2,C^{i}_{\circ},W_{0}, W_0^{2,i} }_t  )^2 \bigg{|} \mathcal{F}_C^{N_2} \bigg{]} + K_0  \mathbb{E} \bigg{[} ( W_t^{1,W_{0}} - \tilde W_t^{1,W_{0}} )^2  \bigg{|} \mathcal{F}_C^{N_2} \bigg{]},  \notag
\end{align}
where the uniform bounds from (\ref{Eq:UniformBounds}) were used together with the compactness of the state space assumption from Assumption \ref{A:Assumption1}.

Therefore, using iterated expectations, we have that
\begin{align}
\mathbb{E} \bigg{[}  (  Z_t^{C^{i}_{\circ}}(x)- \tilde Z_t^{C^{i}_{\circ}}(x)) ^2  \bigg{]} &\leq \mathbb{E} \bigg{[}  ( W^{2,C^{i}_{\circ},W_{0}, W_0^{2,i} }_t -   \tilde  W^{2,C^{i}_{\circ},W_{0}, W_0^{2,i} }_t  )^2  \bigg{]} + K_0  \mathbb{E} \bigg{[} ( W_t^{1,W_{0}} - \tilde W_t^{1,W_{0}} )^2   \bigg{]}\nonumber\\
&\leq \frac{K_1}{ N_2}+ K_{2} \int_0^t  \sup_{x \in \mathcal{X}} \mathbb{E}\left[  E_s^{N_2}(C^{i}_{\circ},W_{0},W_{0}^{2,i},x)\right] ds\nonumber\\
 &\qquad\qquad+\frac{K_{3}}{N_{2}}\sum_{j=1}^{N_{2}} \int_0^t  \sup_{x \in \mathcal{X}} \mathbb{E}\left[   E_s^{N_2}(C_{\circ}^j,W_{0},W_{0}^{2,j},x)  \right] ds\label{Eq:Bound2}
\end{align}
where (\ref{WTildeBounds0})  was used.

Finally, using the assumption that $\sigma(\cdot)$ is globally Lipschitz,
\begin{align}
\mathbb{E} \bigg{[}  \big{(} H_t^{2,C^{i}_{\circ}}(x) - \tilde H_t^{2,C^{i}_{\circ}}(x) \big{)} ^2 \bigg{]} &\leq K_0 \mathbb{E} \bigg{[}  (  Z_t^{C^{i}_{\circ}}- \tilde Z_t^{C^{i}_{\circ}}) ^2  \bigg{]}\nonumber\\
&\leq\frac{K_1}{ N_2}+ K_{2} \int_0^t  \sup_{x \in \mathcal{X}} \mathbb{E}\left[  E_s^{N_2}(C^{i}_{\circ},W_{0},W_{0}^{2,i},x)\right] ds\nonumber\\
 &\qquad\qquad+\frac{K_{3}}{N_{2}}\sum_{j=1}^{N_{2}} \int_0^t  \sup_{x \in \mathcal{X}} \mathbb{E}\left[   E_s^{N_2}(C_{\circ}^j,W_{0},W_{0}^{2,j},x)  \right] ds,\label{Eq:Bound3}
\end{align}
for possibly different, but finite constants $K_1,K_2,K_3<\infty$.

\subsection{Bound for $\mathbb{E}\left[E_t^{N_2}(C^{i}_{\circ},W_{0},W_{0}^{2,i},x)\right]$}

Collecting our results from (\ref{Eq:Bound1}), (\ref{Eq:Bound2}) and (\ref{Eq:Bound3}), together with the definition of the error function $E_t^{N_2}$, we have, for $i=1,\cdots, N_{2}$, the bound
\begin{align}
\sup_{x \in \mathcal{X}} \mathbb{E}\left[ E_t^{N_2}(C^{i}_{\circ},W_{0},W_{0}^{2,i},x) \right]  &\leq \frac{K_1}{ N_2} + K_2 \int_0^t  \sup_{x \in \mathcal{X}} \mathbb{E} \left[   E_s^{N_2}(C^{i}_{\circ},W_{0},W_{0}^{2,i}, x) \right]ds,\nonumber\\
&\quad+\frac{K_{3}}{N_{2}}\sum_{j=1}^{N_{2}} \int_0^t  \sup_{x \in \mathcal{X}} \mathbb{E}\left[   E_s^{N_2}(C_{\circ}^j,W_{0},W_{0}^{2,j},x)  \right] ds\nonumber
\end{align}

Therefore, by averaging over all $i=1,\cdots, N_{2}$ and then using Gronwall's inequality, we get for any $0 \leq t \leq T$,
\begin{eqnarray}
\frac{1}{N_{2}}\sum_{i=1}^{N_{2}} \sup_{x \in \mathcal{X}} \mathbb{E} \left[  E_t^{N_2}(C^{i}_{\circ},W_{0},W_{0}^{2,i},x) \right] &\leq&  \frac{K}{ N_2}.\nonumber
%\label{Einequality}
\end{eqnarray}
for an appropriate constant $K<\infty$. Combining the last two displays we naturally get, again using Gronwall's inequality, that for $i=1,\cdots, N_{2}$
\begin{eqnarray}
\sup_{x \in \mathcal{X}}  \mathbb{E} \left[  E_t^{N_2}(C^{i}_{\circ},W_{0},W_{0}^{2,i},x) \right] &\leq&  \frac{K}{ N_2}.
\label{Einequality}
\end{eqnarray}
where the constant $K<\infty$ does not depend on $i$ or $N_2$.
\subsection{Convergence of neural network prediction}

The bound (\ref{Einequality}) of course proves the (uniform) convergence in probability of the neural network output $g_t^{N_2}(x) \rightarrow g_t(x)$. Recall, by (\ref{Gbound}), that
\begin{align}
 \mathbb{E}| g_t^{N_2}(x) - g_t(x)  |&\leq \mathbb{E} \bigg{|} \Gamma_t^{g,1}(x) \bigg{|} + \frac{K}{\sqrt{N_2}}\nonumber.
\end{align}
where
\begin{eqnarray}
\bigg{|} \Gamma_t^{g,1}(x) \bigg{|} \leq  \frac{K}{N_2} \sum_{i=1}^{N_2}  \sqrt{ E_t^{N_2} (C^{i}_{\circ},W_{0},W_{0}^{2,i},x)  }.\nonumber
\end{eqnarray}

Then, using the Cauchy-Schwartz inequality and (\ref{Einequality}),
\begin{eqnarray}
\mathbb{E} \bigg{|} \Gamma_t^{g,1}(x) \bigg{|} &\leq&  \frac{K}{N_2} \sum_{i=1}^{N_2}  \mathbb{E} \bigg{[} \sqrt{ E_t^{N_2}(C^{i}_{\circ},W_{0},W_{0}^{2,i},x)  } \bigg{]} \notag \\
&\leq& \frac{K}{N_2} \sum_{i=1}^{N_2} \sqrt{ \mathbb{E} \bigg{[}  E_t^{N_2}(C^{i}_{\circ},W_{0},W_{0}^{2,i},x)   \bigg{]} }  \notag \\
&\leq& \frac{K}{N_2} \sum_{i=1}^{N_2} \sqrt{ \sup_{x \in \mathcal{X}} \mathbb{E} \bigg{[}  E_t^{N_2}(C^{i}_{\circ},W_{0},W_{0}^{2,i},x)   \bigg{]} }  \notag \\
&\leq& \frac{K}{ \sqrt{N_2}}.\nonumber
\end{eqnarray}

Therefore, for $0 \leq t \leq T$, and for some unimportant constant $K<\infty$
\begin{eqnarray}
\sup_{x \in \mathcal{X}} \mathbb{E} \bigg{[} | g_t^{N_2}(x) - g_t(x) | \bigg{]} \leq \frac{K}{ \sqrt{N_2}}, \nonumber
\end{eqnarray}
concluding the identification of the limit in Theorem \ref{Theorem1}.

\section{Proof of Theorem \ref{Theorem1}-Uniqueness of the limit}\label{S:PropertiesLimitingSystem}

\begin{lemma}
The solution to the limiting system (\ref{Eq:LLN_DNNTheoremIntro}) is unique in $C([0,1], \mathcal{C} \times \mathcal{W}^{1} \times \mathcal{W}^2 \times \mathcal{X})$.
\end{lemma}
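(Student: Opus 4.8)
The plan is to prove uniqueness by a Gronwall estimate on an appropriate distance between two candidate solutions, closely following the structure of the a priori bounds in Section~\ref{AprioriBoundsSecondLayer}. Suppose $(\tilde C,\tilde W^1,\tilde W^2)$ and $(\hat C,\hat W^1,\hat W^2)$ both solve the limiting system (\ref{Eq:LLN_DNNTheoremIntro}) (equivalently (\ref{Eq:LLN_DNN})) on $[0,1]$ with the common initial data $\tilde C_0^c=\hat C_0^c=c$, $\tilde W_0^{1,w}=\hat W_0^{1,w}=w$, $\tilde W_0^{2,c,w,u}=\hat W_0^{2,c,w,u}=u$, and denote by $\hat g_t$, $\hat Z_t^c$, $\hat V_t^w$, $\hat H_t^{1,w}$, $\hat H_t^{2,c}$ the functionals built from the second solution. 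First I would record uniform a priori bounds: arguing as in Section~\ref{AprioriBoundsSecondLayer}, using that $\sigma\in C^2_b$, that $\mathcal{X}\times\mathcal{Y}$, $\mathcal{C}$, $\mathcal{W}^1$, $\mathcal{W}^2$ are compact (Assumption~\ref{A:Assumption1}), and applying Gronwall's inequality first to $\int_{\mathcal{C}}|\tilde C_t^c|\,\mu_c(dc)$ and then to the individual trajectories, one gets a constant $K<\infty$ with
\[
\sup_{0\le t\le 1}\Big(|\tilde C_t^c|+\norm{\tilde W_t^{1,w}}+|\tilde W_t^{2,c,w,u}|+|g_t(x)|+|V_t^w(x)|\Big)\le K
\]
uniformly over all parameters and over both solutions.

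Next I would assemble the difference estimates. Set
\[
A_t\vcentcolon=\sup_{c}|\tilde C_t^c-\hat C_t^c|^2+\sup_{w}\norm{\tilde W_t^{1,w}-\hat W_t^{1,w}}^2+\sup_{c,w,u}|\tilde W_t^{2,c,w,u}-\hat W_t^{2,c,w,u}|^2,
\]
the suprema being over $\mathcal{C}$, $\mathcal{W}^1$, and $\mathcal{C}\times\mathcal{W}^1\times\mathcal{W}^2$; continuity of the solution in its parameters (part of lying in $C([0,1],\mathcal{C}\times\mathcal{W}^1\times\mathcal{W}^2\times\mathcal{X})$) ensures $A_t$ is finite and measurable in $t$, with $A_0=0$. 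Using that $\sigma,\sigma'$ are bounded and Lipschitz and that $\mu_c,\mu_{W^1},\mu_{W^2}$ are probability measures (so $\int|\cdot|\,d\mu\le(\int|\cdot|^2\,d\mu)^{1/2}\le\sup|\cdot|$), one obtains the chain
\begin{align*}
|\tilde H_t^{1,w}(x)-\hat H_t^{1,w}(x)| &\le K\norm{\tilde W_t^{1,w}-\hat W_t^{1,w}}, \\
|\tilde Z_t^c(x)-\hat Z_t^c(x)| &\le K\int_{\mathcal{W}^1}\int_{\mathcal{W}^2}\Big(|\tilde W_t^{2,c,w,u}-\hat W_t^{2,c,w,u}|+|\tilde H_t^{1,w}(x)-\hat H_t^{1,w}(x)|\Big)\mu_{W^2}(du)\,\mu_{W^1}(dw), \\
|\tilde H_t^{2,c}(x)-\hat H_t^{2,c}(x)| &\le K|\tilde Z_t^c(x)-\hat Z_t^c(x)|, \\
|g_t(x)-\hat g_t(x)|+|V_t^w(x)-\hat V_t^w(x)| &\le K\int_{\mathcal{C}}\Big(|\tilde C_t^c-\hat C_t^c|+|\tilde Z_t^c(x)-\hat Z_t^c(x)|+\int_{\mathcal{W}^2}|\tilde W_t^{2,c,w,u}-\hat W_t^{2,c,w,u}|\,\mu_{W^2}(du)\Big)\mu_c(dc),
\end{align*}
so that each quantity on the left is $\le K\sqrt{A_t}$.

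Finally I would substitute these bounds into the integral (Duhamel) forms of the three equations in (\ref{Eq:LLN_DNNTheoremIntro}). For instance,
\[
|\tilde C_t^c-\hat C_t^c|\le\int_0^t\int_{\mathcal{X}\times\mathcal{Y}}\Big(|y-g_s(x)|\,|\tilde H_s^{2,c}(x)-\hat H_s^{2,c}(x)|+|g_s(x)-\hat g_s(x)|\,|\hat H_s^{2,c}(x)|\Big)\pi(dx,dy)\,ds\le K\int_0^t\sqrt{A_s}\,ds,
\]
and expanding the products of three uniformly bounded factors analogously for $\tilde W_t^{1,w}-\hat W_t^{1,w}$ and $\tilde W_t^{2,c,w,u}-\hat W_t^{2,c,w,u}$ yields the same type of bound. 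Squaring, using $\big(\int_0^t\sqrt{A_s}\,ds\big)^2\le t\int_0^tA_s\,ds$, and taking suprema over the parameter ranges and over $x\in\mathcal{X}$, one arrives at $A_t\le K\int_0^tA_s\,ds$ on $[0,1]$ with $A_0=0$; Gronwall's inequality then forces $A_t\equiv 0$, i.e.\ the two solutions coincide, which proves uniqueness. The main obstacle here is organizational rather than conceptual: $g_t$, $\tilde Z_t^c$ and $V_t^w$ couple the three families of trajectories through integrals against the initialization measures, so one must check carefully that after passing from $\int|\cdot|\,d\mu$ to suprema every difference on the right-hand side is genuinely dominated by $\sqrt{A_s}$ and that the constants stay finite — and it is exactly the compactness in Assumption~\ref{A:Assumption1} together with the $C^2_b$ regularity of $\sigma$ that make this closed Gronwall loop go through, just as for the a priori bounds of Section~\ref{AprioriBoundsSecondLayer}.
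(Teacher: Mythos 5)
Your proposal is correct and follows essentially the same route as the paper: uniform a priori bounds as in Section \ref{AprioriBoundsSecondLayer}, Lipschitz estimates on the derived quantities $\tilde H^{1,w}_t$, $\tilde Z^c_t$, $\tilde H^{2,c}_t$, $g_t$, $V^w_t$, and a Gronwall argument on a supremum-over-parameters distance vanishing at $t=0$. The only (cosmetic) difference is that the paper folds the differences of $Z$, $V$, and $g$ directly into its Gronwall functional $Q_t$, whereas you keep only the trajectory differences in $A_t$ and dominate the derived quantities pointwise by $K\sqrt{A_t}$; both versions close the same loop.
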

\begin{proof}

Suppose there are two solutions to (\ref{Eq:LLN_DNNTheoremIntro}). Let's denote the first solution as $(\hat W_t^{1,w}, \hat W^{2,c,w,u}_{t}, \hat C_t^c, \hat Z_t^c(x), \hat V_t^w(x), \hat g_t(x))$ and the second solution as $(\bar W_t^{1,w}, \bar W^{2,c,w,u}_{t}, \bar C_t^c, \bar Z_t^c(x), \bar V_t^w(x), \bar g_t(x))$. Define the function

\begin{align}
Q_t &=  \sup_{(c,w,u, x) \in \mathcal{C} \times \mathcal{W}^1 \times \mathcal{W}^2  \times \mathcal{X} }\left\{ ( \hat W^{2,c,w,u}_t - \bar W^{2,c,w,u}_t )^2+  \norm{ \hat W_t^{1,w} - \bar W_t^{1,w} }^2 + ( \hat V_t^w(x) - \bar V_t^{w}(x))^2 + ( \hat Z_t^c(x) - \bar Z_t^c(x) )^2 \right.\notag \\
& \left.\hspace{4cm} + ( \hat C_t^c - \bar C_t^c )^2 + ( \hat g_t(x) - \bar g_t(x) )^2 \right\}\notag
\end{align}
Note that
\begin{eqnarray}
Q_0 = 0.\notag
\end{eqnarray}
We next study the evolution of $Q_t$ for $t > 0$.

Using the same approach as in Section \ref{AprioriBoundsSecondLayer}, we can show that any $\tilde W^{2,c,w,u}_{t}, \tilde C_t^c, \tilde W_t^{1,w}, \tilde H_t^{1,w}(x), \tilde Z_t^c, V_t^w,$ and $g_t(x)$ which solve (\ref{Eq:LLN_DNNTheoremIntro}) are uniformly bounded on $\mathcal{C} \times \mathcal{W}^{1} \times \mathcal{W}^2 \times \mathcal{X} \times [0,1]$.

We can then prove the inequality
\begin{eqnarray}
( \hat W^{2,c,w,u}_{t}  - \bar W^{2,c,w,u}_{t} )^2&=& 2\int_0^t  \int_{\mathcal{X} \times \mathcal{Y} }( \hat W^{2,c,w,u}_{s}  - \bar W^{2,c,w',u}_{s} )  \bigg{(} \big{(} y -  \hat g_s(x) \big{)} \hat C_s^{c} \sigma'(\hat Z_s^{c}(x)) )  \hat H_s^{1,w}(x)  \notag \\
& &\quad- \big{(} y - \bar g_s(x) \big{)} \bar C_s^{c} \sigma'(\bar Z_s^{c}(x) )  \bar H_s^{1,w}(x)   \bigg{)}  \pi(dx,dy) ds \notag \\
&\leq& K \int_0^t Q_s ds.\notag
\end{eqnarray}
where we have also used Young's inequality and the fact that $\mathcal{X} \times \mathcal{Y}$ is compact. Therefore,
\begin{eqnarray}
 \sup_{(c,w,u, x) \in \mathcal{C} \times \mathcal{W}^1 \times \mathcal{W}^2  \times \mathcal{X} } ( \hat W^{2,c,w,u}_{t}  - \bar W^{2,c,w,u}_{t} )^2 \leq K \int_0^t Q_s ds.\notag
\end{eqnarray}
Similarly,
\begin{eqnarray}
 \sup_{(c,w,u, x) \in \mathcal{C} \times \mathcal{W}^1 \times \mathcal{W}^2  \times \mathcal{X} } \norm{ \hat W^{1,w}_{t}  - \bar W^{1,w}_{t} }^2 &\leq& K \int_0^t Q_s ds.  \notag \\
  \sup_{(c,w,u, x) \in \mathcal{C} \times \mathcal{W}^1 \times \mathcal{W}^2  \times \mathcal{X} } ( \hat C_t^c  - \bar C_t^c )^2 &\leq& K \int_0^t Q_s ds.\notag
\end{eqnarray}

Next, using the Cauchy-Schwarz inequality and Young's inequality,
\begin{align}
( \hat Z_t^c(x) -  \bar Z_t^c (x))^2 &= \bigg{(} \int_{\mathcal{W}^{1}} \int_{\mathcal{W}^2}   \hat  W^{2,c,w,u }_t  \hat H^{1,w}_t(x)  \mu_{W^2}(du) \mu_{W^{1}}(dw)  -\nonumber\\
&\hspace{4cm}-\int_{\mathcal{W}^{1}} \int_{\mathcal{W}^2}   \bar W^{2,c,w,u }_t  \bar H^{1,w}_t(x)  \mu_{W^2}(du) \mu_{W^{1}}(dw)  \bigg{)}^2 \notag \\
&= \bigg{(} \int_{\mathcal{W}^{1}} \int_{\mathcal{W}^2} (   \hat  W^{2,c,w,u }_t  \hat H^{1,w}_t(x)  -   \bar W^{2,c,w,u }_t  \bar H^{1,w}_t(x) )  \mu_{W^2}(du)    \mu_{W^{1}}(dw) \bigg{)}^2 \notag \\
&\leq K \int_{\mathcal{W}^{1}} \int_{\mathcal{W}^2} \bigg{[} (   \hat  W^{2,c,w,u }_t  -    \bar  W^{2,c,w,u }_t   ) + ( \hat W_t^{1,w} - \bar W_t^{1,w} )^2  \bigg{]} \mu_{W^2}(du)    \mu_{W^{1}}\hat{}(dw)  \notag \\
&\leq K  \sup_{(c,w,u, x) \in \mathcal{C} \times \mathcal{W}^1 \times \mathcal{W}^2  \times \mathcal{X} } \bigg{[}  (   \hat  W^{2,c,w,u }_t  -    \bar  W^{2,c,w,u }_t   ) + ( \hat W_t^{1,w} - \bar W_t^{1,w} )^2  \bigg{]}  \notag \\
&\leq K \int_0^t Q_s ds.\notag
\end{align}

Using a similar approach,
\begin{align}
( \hat V_t^w (x)-  \bar V_t^w (x))^2 &= \bigg{(}  \int_{\mathcal{C}}   \int_{\mathcal{W}^2} \hat C_t^c  \sigma'(\hat Z_t^{c}(x) )   \hat W^{2,c,w,u}_t   \mu_{W^2}(du)  \mu_c(dc) \notag \\
& \quad-  \int_{\mathcal{C}} \int_{\mathcal{W}^2}  \bar C_t^c  \sigma'(\bar Z_t^{c}(x) )   \bar W^{2,c,w,u}_t   \mu_{W^2}(du) \mu_c(dc) \bigg{)}^2 \notag \\
&\leq   \int_{\mathcal{C}}   \int_{\mathcal{W}^2} \bigg{(} \hat C_t^c  \sigma'(\hat Z_t^{c}(x) )   \hat W^{2,c,w,u}_t  - \bar C_t^c  \sigma'(\bar Z_t^{c}(x) )   \bar W^{2,c,w,u}_t   \bigg{)}^2  \mu_{W^2}(du) \mu_c(dc) \notag \\
&\leq  K \int_{\mathcal{C}}   \int_{\mathcal{W}^2} \bigg{[} ( \hat C_t^c  - \bar C_t^c )^2 + ( \hat Z_t^c(x) - \bar Z_t^{c}(x) )^2 + ( \hat  W^{2,c,w,u}_t -   \bar W^{2,c,w,u}_t )^2  \bigg{]} \mu_{W^2}(du) \mu_c(dc) \notag \\
&\leq  K \sup_{(c,w,u, x) \in \mathcal{C} \times \mathcal{W}^1 \times \mathcal{W}^2  \times \mathcal{X} } \bigg{[}   ( \hat C_t^c  - \bar C_t^c )^2 + ( \hat Z_t^c(x) - \bar Z_t^{c}(x) )^2 + ( \hat  W^{2,c,w,u}_t -   \bar W^{2,c,w,u}_t )^2  \bigg{]}  \notag \\
&\leq K \int_0^t Q_s ds.\notag
\end{align}

Therefore,
\begin{eqnarray}
\sup_{(c,w,u, x) \in \mathcal{C} \times \mathcal{W}^1 \times \mathcal{W}^2  \times \mathcal{X} }  ( \hat Z_t^c (x)-  \bar Z_t^c (x))^2 &\leq& K \int_0^t Q_s ds, \notag \\
\sup_{(c,w,u, x) \in \mathcal{C} \times \mathcal{W}^1 \times \mathcal{W}^2  \times \mathcal{X} }  ( \hat V_t^w (x)-  \bar V_t^w (x))^2 &\leq& K \int_0^t Q_s ds.\notag
\end{eqnarray}

Finally,
\begin{eqnarray}
( \hat g_t(x) -  \bar g_t(x) )^2 &=& \bigg{(} \int_{\mathcal{C}}  \hat C_t^{c} \hat H_t^{2,c}(x) \mu_c(dc) - \int_{\mathcal{C}}  \bar C_t^{c} \bar H_t^{2,c}(x) \mu_c(dc) \bigg{)}^2 \notag \\
&\leq& \int_{\mathcal{C}}   \bigg{[} \hat C_t^{c} \hat H_t^{2,c}(x) \mu_c(dc) - \bar C_t^{c} \bar H_t^{2,c}(x) \bigg{]}^2 \mu_c(dc) \notag \\
&\leq& K \int_{\mathcal{C}}   \bigg{[} ( \hat C_t^{c} - \bar C_t^{c} )^2 +  ( \hat Z_t^c(x) - \bar Z_t^c(x) )^2 \bigg{]} \mu_c(dc) \notag \\
&\leq& K \int_0^t Q_s ds.\notag
\end{eqnarray}
Consequently,
\begin{eqnarray}
\sup_{(c,w,u, x) \in \mathcal{C} \times \mathcal{W}^1 \times \mathcal{W}^2  \times \mathcal{X} }  ( \hat g_t (x)-  \bar g_t (x))^2 \leq K \int_0^t Q_s ds.\notag
\end{eqnarray}

Collecting our results,
\begin{eqnarray}
Q_t &\leq& K \int_0^t Q_s ds, \notag \\
Q_0 &=& 0.\notag
\end{eqnarray}
Therefore, by Gronwall's inequality,
\begin{eqnarray}
Q_t = 0,\notag
\end{eqnarray}
for all $t \in [0,1]$, completing the proof of the lemma.

\end{proof}

\section{Conclusions and future work}\label{S:Conclusions}

In this paper, we have developed an approach that allows us to obtain the limiting behavior of multi-layer neural networks in the mean-field scaling as the number of units per layer and stochastic gradient steps grow to infinity. We have demonstrated that the limit is characterized by paths of weights that connecting the input layer to the output layer. The limit procedure shows that the main hyperparameters (the learning rates) must be chosen using a specific scaling (as a function of the number of hidden units per layer) in order to get a well-defined limit. A numerical study demonstrates that the mean-field scaling of the learning rates has a significant effect on how well the network can be trained. In addition, we have shown that the limit neural network seeks to minimize the limit objective function.

There  are of course a number of open questions. The discussion in Section \ref{ChallengesIntro} shows that a genuine mean field formulation in terms of convergence of empirical distributions of trained parameters has yet to be understood. Secondly, our methodology works for convex error functions like the regression task. It is of interest to characterize the limit in other cases such as Wassenstein distances or regularized entropies. Thirdly, the universal approximation theorems say that a neural network approximator in the infinite unit per layer limit can approximate any continuous function \cite{Barron,Funakashi1989,Hornik1,Hornik2}. It would be of interest to investigate whether the limit procedure developed in this paper could lead to more specific information on convergence rates. Fourthly, the, commonly used in practice, unregularized algorithm and its finer properties (such as more detailed behavior  as time grows to infinity) need to be better understood and we hope that the results in this paper build towards this direction.

\appendix

\section{Limit of First Layer, Lemma \ref{LemmaFirstLayermaintext}}  \label{ProofFirstLayer}

In this appendix we prove Lemma \ref{LemmaFirstLayermaintext}, which is about  the limit of the first layer. The proof is analogous to \cite{NeuralNetworkLLN}. Hence, instead of repeating all the arguments we will only present the general outline emphasizing the differences and refer the interested reader to \cite{NeuralNetworkLLN} when the arguments are the same.

We let $N_1 \rightarrow \infty$ (with $N_2$ fixed). We want to prove that for each $N_2 \in \mathbb{N}$, $\tilde \gamma_{\cdot}^{N_1, N_2}  \overset{d} \rightarrow \gamma_{\cdot}^{N_2}$ in $D_E([0,1])$. $\gamma_{t}^{N_2}$ is a random measure-valued process which, for every $f\in C^{2}_{b}(\mathbb{R}^{d + 2 N_2})$, $\gamma^{N_2}$, satisfies the evolution equation (\ref{LimitSPDELLNmaintext})-(\ref{LimitSPDELLNmaintextb}).

\subsection{Relative Compactness} \label{RelativeCompactness}

Let's first establish a bound on $C_k^i$. Recall that $\sigma(\cdot)$ is bounded and $\alpha_C^{N_1,N_2} =  \frac{N_2}{N_1}$. In the following calculations, the unimportant constants, $K, K_0, K_1,$ and $K_2$ may change from line to line.

For $k = 0, 1, \ldots, \floor*{ N_1}$,
\begin{eqnarray}
C^i_{k+1} &=& C^i_{k} + \frac{\alpha_C^N }{N_2} \bigg{(} y_k - \frac{1}{N_2} \sum_{m=1}^{N_2} C^m_k H^{2,m}_k(x_k) \bigg{)} H_k^{2,i}(x_k), \notag
\end{eqnarray}
Since $\sigma(\cdot)$ is bounded, $| H^{2,i}_k | < K$. Therefore,

\begin{eqnarray}
\big{|} C^i_{k+1} \big{|}  \leq  \big{|} C^i_{k}  \big{|} + \frac{1}{N_1} \bigg{(} K_1 + \frac{1}{N_2} \sum_{m=1}^{N_2}  \big{|} C^m_k \big{|}  \bigg{)}. \notag
\end{eqnarray}
This yields
\begin{eqnarray}
\frac{1}{N_2} \sum_{i=1}^{N_2} \big{|} C^i_{k+1} \big{|}  \leq \frac{1}{N_2} \sum_{i=1}^{N_2}   \big{|} C^i_{k}  \big{|} + \frac{1}{N_1} \bigg{(} K_1 + \frac{1}{N_2} \sum_{i=1}^{N_2}  \big{|} C^i_k \big{|}  \bigg{)}. \notag
\end{eqnarray}

This implies that
\begin{eqnarray}
\frac{1}{N_2} \sum_{i=1}^{N_2} \big{|} C^i_{k+1} \big{|} &\leq& \frac{1}{N_2} \sum_{i=1}^{N_2}   \big{|} C^i_{0}  \big{|} + K_1 \frac{k}{N_1} + \frac{K_2}{N_1} \sum_{j=1}^k \frac{1}{N_2} \sum_{i=1}^{N_2}  \big{|} C^i_k \big{|} \notag \\
&\leq& K_0 + K_1 \frac{k}{N_1} + \frac{K_2}{N_1} \sum_{j=1}^k \frac{1}{N_2} \sum_{i=1}^{N_2}  \big{|} C^i_k \big{|} \notag
\end{eqnarray}

By Gronwall's inequality, for $k \leq \floor*{N_1}$,
\begin{eqnarray}
\frac{1}{N_2} \sum_{i=1}^{N_2} \big{|} C^i_{k} \big{|} \leq K \exp( K).\nonumber
\end{eqnarray}

Then, we also have that
\begin{eqnarray}
\big{|} C^i_{k+1} \big{|} & \leq &  \big{|} C^i_{k}  \big{|} + \frac{K}{N_1}. \notag \\
&\leq & K \frac{k}{N_1}.\nonumber
\end{eqnarray}

This immediately yields that for $k \leq \floor*{N_1}$
\begin{eqnarray}
\big{|} C^i_{k} \big{|} < K.\nonumber
\end{eqnarray}

Let's now address the parameters $W_k^{2,i,j}$. Recall that $\alpha_{W,2}^{N_1,N_2} = N_2$. Then,
\begin{eqnarray}
\big{|} W^{2,i,j}_{k+1} \big{|} &\leq& \big{|} W^{2,i,j}_{k} \big{|}  + \frac{ \alpha_{W,2}^{N_1,N_2}}{ N_1 N_2 } \bigg{|} \big{(} y_k - g^N_{\theta_k}(x_k) \big{)} C_k^i \sigma'(Z_k^{i} )  H^{1,i}_k \bigg{|} \notag \\
&\leq& \big{|} W^{2,i,j}_{k} \big{|} + \frac{K}{N_1}.\nonumber
\end{eqnarray}

Therefore, for $k \leq \floor*{N_1}$,
\begin{eqnarray}
\big{|} W^{2,i,j}_{k+1} \big{|} \leq K.\nonumber
\end{eqnarray}

Similarly, we have
\begin{eqnarray}
\big{|} W^{1,j}_{k+1} \big{|} &\leq& \big{|} W^{1,j}_{k}  \big{|} +  \frac{1}{N_1} \big{|} \big{(} y_k - g^N_{\theta_k}(x_k)  \big{)} \big{(} \frac{1}{N_2} \sum_{i=1}^{N_2} C_k^i \sigma'(Z_k^{i} )  W^{2,i,j}_k \big{)}  \sigma'(W_k \cdot x ) x_j \big{|} \notag \\
&\leq& \big{|} W^{1,j}_{k}  \big{|} +  \frac{K}{N_1}.\nonumber
\end{eqnarray}

Therefore, we obtain
\begin{eqnarray}
\big{|} W^{1,j}_{k} \big{|} \leq K. \nonumber
\end{eqnarray}

Collecting our results, for all $k/N_1\leq 1$ and for all $j=1,\cdots, N_2$, we have the uniform bound
\begin{eqnarray}
| C^i_k | + \norm{ W^{1,j}_k } + | W_k^{2,i,j} |  < K.
\label{PreLimitUniformBound}
\end{eqnarray}

We now prove relative compactness of the family $\{\gamma^{N_1, N_2}\}_{N_1 \in \mathbb{N}}$ in $D_E([0,1])$ where $E = \mathcal{M} ( \mathbb{R}^{d+2N_2})$.  It is sufficient to show compact containment and regularity of the $\gamma^{N_1, N_2}$'s (see for example Chapter 3 of \cite{EthierAndKurtz}).

\begin{lemma}\label{L:CompactContainment}
For each $\eta > 0$ and $t \geq 0$, there is a compact subset $\mathcal{K}$ of E such that
\begin{eqnarray*}
\sup_{N_1 \in \mathbb{N}, 0 \leq t \leq T} \mathbb{P}[ \gamma_t^{N_1, N_2} \notin \mathcal{K} ] < \eta.\nonumber
\end{eqnarray*}
\end{lemma}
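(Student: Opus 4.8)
The plan is to obtain compact containment as an immediate consequence of the uniform a priori bound (\ref{PreLimitUniformBound}), which I would first emphasize is \emph{deterministic} and uniform in $N_1$. Indeed, by Assumption \ref{A:Assumption1} the random initializations lie in the fixed compact sets $\mathcal{C},\mathcal{W}^1,\mathcal{W}^2$, and the Gronwall estimates that produce (\ref{PreLimitUniformBound}) use only the boundedness of $\sigma$ and $\sigma'$ together with compactness of $\mathcal{X}\times\mathcal{Y}$; hence there is a constant $K<\infty$, independent of $N_1$ (for fixed $N_2$) and of $k\le\lfloor N_1\rfloor$, with $|C_k^i|+\norm{W_k^{1,j}}+|W_k^{2,i,j}|<K$ for all indices. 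Since $\gamma_t^{N_1,N_2}=\tilde\gamma_{\lfloor N_1 t\rfloor}^{N_1,N_2}$ and $\lfloor N_1 t\rfloor\le\lfloor N_1\rfloor$ whenever $t\in[0,1]$, this bound controls every atom of $\gamma_t^{N_1,N_2}$.

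Next I would introduce the compact set
\[
\mathcal{B}_K=\big\{(w^1,w^{2,1},\dots,w^{2,N_2},c^1,\dots,c^{N_2})\in\mathbb{R}^{d+2N_2}:\ \norm{w^1}\le K,\ |w^{2,i}|\le K,\ |c^i|\le K\ \text{for all }i\big\},
\]
which is closed and bounded in $\mathbb{R}^{d+2N_2}$, hence compact. By the previous step, with probability one $\gamma_t^{N_1,N_2}(\mathbb{R}^{d+2N_2}\setminus\mathcal{B}_K)=0$ for every $t\in[0,1]$ and every $N_1$; and since $\tilde\gamma_k^{N_1,N_2}$ is an average of $N_1$ Dirac masses, $\gamma_t^{N_1,N_2}$ is always a probability measure. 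Thus $\gamma_t^{N_1,N_2}$ lies almost surely in
\[
\mathcal{K}=\big\{\nu\in\mathcal{M}(\mathbb{R}^{d+2N_2}):\ \nu\ \text{is a probability measure and}\ \nu(\mathcal{B}_K)=1\big\}.
\]
The family $\mathcal{K}$ is uniformly tight, with the single compact set $\mathcal{B}_K$ serving for every member, and it is closed in the topology of weak convergence (by the portmanteau theorem applied to the closed set $\mathcal{B}_K$), so by Prokhorov's theorem $\mathcal{K}$ is compact in $E=\mathcal{M}(\mathbb{R}^{d+2N_2})$. It follows that $\sup_{N_1\in\mathbb{N},\,0\le t\le T}\mathbb{P}[\gamma_t^{N_1,N_2}\notin\mathcal{K}]=0<\eta$, which is the claim (in fact with probability exactly zero).

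I do not expect a genuine obstacle in this step: it is essentially a bookkeeping consequence of the a priori bounds. The only points that require a bit of care are verifying that the constant $K$ in (\ref{PreLimitUniformBound}) is genuinely uniform over $N_1$ and over $k\le\lfloor N_1\rfloor$ (clear from the Gronwall arguments, the key being that the learning-rate scalings in (\ref{Eq:LearningRate}) produce increments of size $O(1/N_1)$ over $O(N_1)$ steps), and identifying the correct topology on $\mathcal{M}(\mathbb{R}^{d+2N_2})$ — weak convergence tested against $C_b$ functions — so that Prokhorov's criterion applies. The genuine analytic work of the relative-compactness argument lies instead in the regularity (oscillation) estimate for $\gamma^{N_1,N_2}$, which is treated separately.
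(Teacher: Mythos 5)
Your proposal is correct and follows essentially the same route as the paper: both rest on the deterministic a priori bound (\ref{PreLimitUniformBound}), take $\mathcal{K}$ to be the set of probability measures giving full mass to the compact cube $[-K,K]^{d+2N_2}$, and conclude that $\gamma_t^{N_1,N_2}\in\mathcal{K}$ almost surely, so the probability in question is in fact zero. Your added verification that $\mathcal{K}$ is compact in $E$ via tightness, closedness under weak convergence, and Prokhorov's theorem is a detail the paper leaves implicit, but it does not change the argument.
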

\begin{proof}
This uniform bound (\ref{PreLimitUniformBound}) actually implies the stronger statement of compact support. In particular, notice that the set $[-K,K]^{d+2N_2}$ is compact, and define
\begin{equation*}\label{E:KDef} \mathcal{K} = \left\{ \omega\in M(\mathbb{R}^{d+2N_2}):\, \omega\left([-K,K]^{d+2N_2}\right) = 1\right\}.
 \end{equation*}

Then $\mathcal{K}\subset M(\mathbb{R}^{d+2N_2})$, and $\mathbb{P}$-a.s. $\gamma_{t}^{N_1, N_2} \in \mathcal{K}$
for all $N_1 \in \mathbb{N}$ and $t\in[0,1]$. This concludes the proof.
\end{proof}

We now establish regularity of the $\gamma^{N_1, N_2}$'s. Define the function $q(z_{1},z_{2})=\min\{|z_{1}-z_{2}|,1\}$ where $z_{1},z_{2} \in \mathbb{R}$.
\begin{lemma}\label{L:regularity}
There is a constant $K<\infty$ such that for $0\leq u\leq \delta$,  $0\leq v\leq \delta\wedge t$, $t\in[0,1]$,
\begin{equation*}
 \mathbb{E}\left[q(\left< f,\gamma^{N_1,N2}_{t+u}\right>,\left< f, \gamma^{N_1, N_2}_t\right>)q(\left< f, \gamma^{N_1, N_2}_t\right>,\left< f,\gamma^{N_1, N_2}_{t-v}\right>)\big| \mathcal{F}^N_t\right]  \le  C \delta + \frac{K}{N_1}.
\end{equation*}

\end{lemma}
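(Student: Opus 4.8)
The plan is to reduce the claimed bound to a purely pathwise one-step estimate on the increments of $\langle f,\tilde\gamma_k^{N_1,N_2}\rangle$ and then to a counting argument; no martingale decomposition is needed for this particular estimate. Since $\langle f,\gamma^{N_1,N_2}_t\rangle$ and $\langle f,\gamma^{N_1,N_2}_{t-v}\rangle$ are $\mathcal{F}^N_t$-measurable and $q(\cdot,\cdot)\le 1$, I would first use $q(z_1,z_2)\le|z_1-z_2|$ to get
\[
\mathbb{E}\!\left[q(\langle f,\gamma^{N_1,N_2}_{t+u}\rangle,\langle f,\gamma^{N_1,N_2}_t\rangle)\,q(\langle f,\gamma^{N_1,N_2}_t\rangle,\langle f,\gamma^{N_1,N_2}_{t-v}\rangle)\,\big|\,\mathcal{F}^N_t\right]\le \mathbb{E}\!\left[\,\big|\langle f,\gamma^{N_1,N_2}_{t+u}\rangle-\langle f,\gamma^{N_1,N_2}_t\rangle\big|\,\big|\,\mathcal{F}^N_t\right].
\]
Hence it suffices to bound $\langle f,\gamma^{N_1,N_2}_{t+u}\rangle-\langle f,\gamma^{N_1,N_2}_t\rangle=\langle f,\tilde\gamma^{N_1,N_2}_{\floor*{N_1(t+u)}}\rangle-\langle f,\tilde\gamma^{N_1,N_2}_{\floor*{N_1 t}}\rangle$, which telescopes over the intervening SGD steps.

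The key observation is that a single SGD step changes $\langle f,\tilde\gamma^{N_1,N_2}_k\rangle$ by at most $K/N_1$ almost surely. Writing $\theta^{(j)}_k=(W^{1,j}_k,(W^{2,i,j}_k)_{i\le N_2},(C^i_k)_{i\le N_2})\in\mathbb{R}^{d+2N_2}$ for the configuration carried by the $j$-th atom, we have $\langle f,\tilde\gamma^{N_1,N_2}_{k+1}\rangle-\langle f,\tilde\gamma^{N_1,N_2}_k\rangle=\frac1{N_1}\sum_{j=1}^{N_1}\big(f(\theta^{(j)}_{k+1})-f(\theta^{(j)}_k)\big)$. Substituting the learning rates (\ref{Eq:LearningRate}) into (\ref{SystemDeep}), and using that $\mathcal{X}\times\mathcal{Y}$ is compact, that $\sigma$ and $\sigma'$ are bounded, and the a priori bound (\ref{PreLimitUniformBound}), one checks that each SGD increment of $C^i_k$, $W^{1,j}_k$, and $W^{2,i,j}_k$ is bounded by $K/N_1$ uniformly over $k$ with $k/N_1\le 1$ (the constant $K$ being allowed to depend on the fixed $N_2$). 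Therefore $\norm{\theta^{(j)}_{k+1}-\theta^{(j)}_k}\le K/N_1$, and since $f\in C^2_b$ has bounded gradient, $|f(\theta^{(j)}_{k+1})-f(\theta^{(j)}_k)|\le K/N_1$; averaging over $j$ gives $|\langle f,\tilde\gamma^{N_1,N_2}_{k+1}\rangle-\langle f,\tilde\gamma^{N_1,N_2}_k\rangle|\le K/N_1$.

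Summing this over the steps from $\floor*{N_1 t}$ to $\floor*{N_1(t+u)}-1$, of which there are at most $N_1u+1\le N_1\delta+1$, yields $|\langle f,\gamma^{N_1,N_2}_{t+u}\rangle-\langle f,\gamma^{N_1,N_2}_t\rangle|\le \frac{K}{N_1}(N_1\delta+1)=K\delta+\frac{K}{N_1}$ almost surely, hence also after conditioning on $\mathcal{F}^N_t$. Combined with the reduction of the first paragraph, this gives the lemma with $C=K$. I do not expect any real obstacle here: unlike the identification of the limiting evolution equation (\ref{LimitSPDELLNmaintext}), which requires a martingale/remainder decomposition of the increments, the regularity estimate is driven entirely by the uniform $O(1/N_1)$ control on SGD step sizes built into the learning-rate scaling together with the a priori bounds. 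The only points requiring mild care are that the $C^i$-coordinates are shared by all $N_1$ atoms and that there are $N_2$ of them, so that $\norm{\theta^{(j)}_{k+1}-\theta^{(j)}_k}$, and hence $K$, pick up a dependence on $N_2$; since $N_2$ is held fixed while $N_1\to\infty$, this is harmless.
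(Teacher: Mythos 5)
Your proposal is correct and follows essentially the same route as the paper: the paper's proof likewise telescopes the increments $C^i_{k+1}-C^i_k$, $W^{1,j}_{k+1}-W^{1,j}_k$, $W^{2,i,j}_{k+1}-W^{2,i,j}_k$ over the at most $N_1\delta+1$ intervening SGD steps, uses the learning-rate scaling together with the a priori bounds and boundedness of $\sigma,\sigma'$ to get a one-step increment of order $1/N_1$, and concludes via the Lipschitz property of $f$ and $q(z_1,z_2)\le\min\{|z_1-z_2|,1\}$. The only cosmetic difference is that you telescope at the level of $\langle f,\tilde\gamma_k\rangle$ while the paper telescopes the parameters themselves and leaves the final step implicit.
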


\begin{proof}

Let $0\leq s<t<\leq T$ and $\delta<1$, such that $t-s<\delta<1$. We then have
\begin{eqnarray}
|  C^i_{\floor*{N_1 t}} -  C^i_{\floor*{N_1 s}} | &=& | \sum_{k = \floor*{N_1 s } }^{\floor*{N_1 t}-1} ( C_{k+1}^i - C_k^i  ) | \notag \\
&\leq&  \sum_{k = \floor*{N s } }^{\floor*{N_1 t}-1} \frac{1}{N_1}  \big{|} \big{(} y_k - g^{N_1, N_2}_{\theta_k}(x_k)  \big{)} H_k^{2,i}(x_k) \big{|} \notag \\
&\leq& \frac{1}{N_1} \sum_{k = \floor*{N_1 s } }^{\floor*{N_1 t}-1} K  \notag\\
&\leq& K_1 \delta + \frac{K_2}{N_1}. \notag
\end{eqnarray}

Using the same approach, we can establish similar bounds for the other parameters:
\begin{eqnarray}
|  W^{1,j}_{\floor*{N_1 t}} -  W^{1,j}_{\floor*{N_1 s}} |  &\leq& K_1 \delta + \frac{K_2}{N_1}, \notag \\
|  W^{2,i,j}_{\floor*{N_1 t}} -  W^{2,i,j}_{\floor*{N_1 s}} |  &\leq& K_1 \delta + \frac{K_2}{N_1}. \nonumber
\end{eqnarray}
The desired result then follows.
\end{proof}

We conclude this section now with the required relative compactness of the sequence $\{\gamma^{N_1,N_2}\}_{N_1\in\mathbb{N}}$. This implies that for each fixed $N_2$, every subsequence $\gamma^{N_1,N_2}$'s has a convergent sub-subsequence as $N_1\rightarrow\infty$.

\begin{lemma}\label{L:RelativeCompactness}
The sequence of probability measures $\{\gamma^{N_1,N_2}\}_{N_1\in\mathbb{N}}$ is relatively compact in $D_{E}([0,1])$.
\end{lemma}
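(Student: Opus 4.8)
The plan is to verify the two standard sufficient conditions for relative compactness of a sequence of measure-valued c\`adl\`ag processes — compact containment together with relative compactness of the one-dimensional ``test-function projections'' — exactly as was done in the single-layer case in \cite{NeuralNetworkLLN} (see also Chapter 3 of \cite{EthierAndKurtz}). Concretely, since $N_2$ is fixed, $E=\mathcal{M}(\mathbb{R}^{d+2N_2})$ equipped with the topology of weak convergence is a complete separable metric space, and there is a countable convergence-determining family $\{f_m\}_{m\in\mathbb{N}}\subset C^2_b(\mathbb{R}^{d+2N_2})$; it then suffices to show (a) the compact containment condition, and (b) that for each such $f_m$ the real-valued process $\la f_m,\gamma^{N_1,N_2}_\cdot\ra$ is relatively compact in $D_{\mathbb{R}}([0,1])$. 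Relative compactness of $\{\gamma^{N_1,N_2}\}_{N_1\in\mathbb{N}}$ in $D_E([0,1])$ then follows.

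Condition (a) is nothing but Lemma~\ref{L:CompactContainment}: by the a priori bound (\ref{PreLimitUniformBound}), for all $t\in[0,1]$ and all $N_1$ the measure $\gamma^{N_1,N_2}_t$ lies almost surely in the compact set $\mathcal{K}$ of probability measures supported in $[-K,K]^{d+2N_2}$, which is even stronger than compact containment.

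For condition (b) I would invoke the tightness criterion for real-valued c\`adl\`ag processes in \cite{EthierAndKurtz} (e.g.\ Theorem~3.8.6 / Corollary~3.7.4 there): each $\la f_m,\gamma^{N_1,N_2}_\cdot\ra$ is uniformly bounded by $\sup|f_m|$, since $\gamma^{N_1,N_2}_t$ is a probability measure, so the only thing left to check is an oscillation bound, and this is precisely the content of Lemma~\ref{L:regularity}, which with $q(z_1,z_2)=\min\{|z_1-z_2|,1\}$ gives
\[
\mathbb{E}\!\left[q\big(\la f,\gamma^{N_1,N_2}_{t+u}\ra,\la f,\gamma^{N_1,N_2}_t\ra\big)\,q\big(\la f,\gamma^{N_1,N_2}_t\ra,\la f,\gamma^{N_1,N_2}_{t-v}\ra\big)\ \big|\ \mathcal{F}^{N}_t\right]\ \le\ C\delta+\frac{K}{N_1},
\]
for $0\le u\le\delta$ and $0\le v\le\delta\wedge t$. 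Setting $\gamma_{N_1}(\delta)=C\delta+K/N_1$ one has $\lim_{\delta\downarrow0}\limsup_{N_1\to\infty}\mathbb{E}[\gamma_{N_1}(\delta)]=0$, which is exactly the hypothesis required; hence each projection is relatively compact in $D_{\mathbb{R}}([0,1])$, and combining with (a) proves the lemma.

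The content here is essentially a repackaging of estimates already proved: the real work is the uniform a priori bound (\ref{PreLimitUniformBound}) and the increment bounds of Lemmas~\ref{L:CompactContainment}--\ref{L:regularity}. The one point requiring a bit of care is the topological reduction — checking that finite-dimensionality of $\mathbb{R}^{d+2N_2}$ makes $E$ Polish and that a countable family of $C^2_b$ test functions can be chosen convergence-determining, so that the abstract criterion genuinely reduces the $D_E([0,1])$ statement to the scalar ones. This is standard and, crucially, causes no difficulty here precisely because $N_2$ is held fixed; the growing-dimension obstruction that blocks a direct joint $(N_1,N_2)\to\infty$ analysis (Section~\ref{ChallengesIntro}) is absent in this lemma.
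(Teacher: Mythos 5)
Your proposal is correct and follows essentially the same route as the paper: the paper's proof simply combines Lemma \ref{L:CompactContainment} (compact containment) and Lemma \ref{L:regularity} (the oscillation bound with $q$) and invokes Theorem 8.6 and Remark 8.7(b) of Chapter 3 of \cite{EthierAndKurtz}. Your version merely makes explicit the standard reduction to a countable convergence-determining family of test functions, which the paper leaves implicit in its citation.
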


\begin{proof}
Given Lemmas \ref{L:CompactContainment} and \ref{L:regularity}, Theorem 8.6 and Remark 8.7 B of Chapter 3 of \cite{EthierAndKurtz}, gives the statement of the lemma.
\end{proof}

\subsection{Identification of the Limit} \label{Identification}

We consider the evolution of the empirical measure $\gamma^{N_1, N_2}_t$ via test functions $f \in C^{2}_{b}(\mathbb{R}^{d+2N_2})$. Using a Taylor expansion based argument similarly to  \cite{NeuralNetworkLLN}, we can show that the scaled empirical measure satisfies
\begin{align}
\la f, \gamma^{N_1, N_2}_t \ra &- \la f, \gamma^{N_1, N_2}_0 \ra = \int_0^t   \int_{\mathcal{X}\times\mathcal{Y}}   \big{(} y -  g_s^{N_1, N_2}(x)   \big{)} \la H_s^{N_1, N_2}(x) \cdot \nabla_c f, \gamma^{N_1,N_2}_{s} \ra \pi(dx,dy)  ds\notag \\
&+ \int_0^t \int_{\mathcal{X}\times\mathcal{Y}}    \big{(} y -  g_s^{N_1,N_2}(x) \big{)} \la  \sigma(w^{1} \cdot x) (\sigma'(Z^{N_1, N_2}_s(x)) \odot c) \cdot \nabla_{w^2} f, \gamma^{N_1, N_2}_{s} \ra \pi(dx, dy) ds \notag \\
&+ \int_0^t \int_{\mathcal{X}\times\mathcal{Y}}    \big{(} y -  g_s^{N_1,N_2}(x) \big{)} \frac{1}{N_2} \sum_{i=1}^{N_2} \la  c_i \sigma'(Z_s^{i, N_1, N_2}(x)) w^{2,i}  \sigma'(w^{1} \cdot x) x \cdot \nabla_{w^{1}} f, \gamma^{N_1, N_2}_{s} \ra \pi(dx, dy) ds \notag \\
&+ M^{N_1, N_2}(t) + O(N_1^{-1}),
\label{EvolutionLayer1}
\end{align}
where $M^{N_1, N_2}(t)$ is a martingale term,
\begin{eqnarray}
Z_s^{i, N_1, N_2}(x) &=& \la w^{2,i} \sigma(w^{1} \cdot x), \gamma_s^{N_1, N_2} \ra, \notag \\
H_s^{i, N_1, N_2}(x) &=&  \sigma( Z_s^{i, N_1, N_2}(x) ), \notag \\
g_s^{N_1, N_2}(x) &=& \frac{1}{N_2}  \sum_{i=1}^{N_2} H_s^{i, N_1, N_2}(x) \la c_i, \gamma_s^{N_1, N_2} \ra.\nonumber
\end{eqnarray}
and we have defined $H_t^{N_1, N_2}$ as the vector $( H_t^{1, N_1, N_2}, \ldots, H_t^{N_2, N_1, N_2})$, $c$ as the vector $(c_1, \ldots, c_{N_2})$, and $Z_t^{N_1, N_2}$ as the vector $(Z_t^{1, N_1, N_2}, \ldots, Z_t^{N_2, N_1, N_2})$. The martingale term $M^{N_1, N_2}(t)$ converges to $0$ in $L^2$ as $N_1 \rightarrow \infty$. That is,
\begin{eqnarray}
\lim_{N_1 \rightarrow \infty} \mathbb{E} \bigg{[} \bigg{(} M^{N_1, N_2}(t) \bigg{)}^2 \bigg{]} = 0.
\label{MartingaleTermL2}
\end{eqnarray}

The proof for (\ref{MartingaleTermL2}) follows as in Lemma 3.1 of  \cite{NeuralNetworkLLN} and thus it is omitted. The limit point of $\gamma^{N_1, N_2}$, as $N_1 \rightarrow \infty$ and for a fixed $N_2$, will satisfy the evolution equation
\begin{align}
\la f, \gamma^{N_2}_t \ra - \la f, \gamma^{N_2}_0 \ra &= \int_0^t   \int_{\mathcal{X}\times\mathcal{Y}}   \big{(} y -  g_s^{N_2}(x)   \big{)} \la H_s^{N_2}(x) \cdot \nabla_c f, \gamma^{N_2}_{s} \ra \pi(dx,dy)  ds\notag \\
&+ \int_0^t \int_{\mathcal{X}\times\mathcal{Y}}    \big{(} y -  g_s^{N_2}(x) \big{)} \la  \sigma(w^{1} \cdot x) (\sigma'(Z_s) \odot c) \cdot \nabla_{w^2} f, \gamma^{N_2}_{s} \ra \pi(dx, dy) ds \notag \\
&+ \int_0^t \int_{\mathcal{X}\times\mathcal{Y}}    \big{(} y -  g_s^{N_2}(x) \big{)} \frac{1}{N_2} \sum_{i=1}^{N_2} \la  c_i \sigma'(Z_s^{i, N_2}(x)) w^{2,i} \sigma'(w^{1} \cdot x) x \cdot \nabla_{w^{1}} f, \gamma^{N_2}_{s} \ra \pi(dx, dy) ds,
\label{LimitSPDELLN}
\end{align}
where
\begin{eqnarray}
Z_s^{i, N_2}(x) &=& \la w^{2,i} \sigma(w^{1}\cdot x), \gamma_s^{N_2} \ra, \notag \\
H_s^{i, N_2}(x) &=&  \sigma( Z_s^{i, N_2}(x) ), \notag \\
g_s^{N_2}(x) &=& \frac{1}{N_2}  \sum_{i=1}^{N_2} H_s^{i, N_2}(x) \la c_i, \gamma_s^{N_2} \ra, \nonumber
\end{eqnarray}
and
\begin{eqnarray}
\gamma^{N_2}_0(dw^{1}, dw^2, dc)  = \mu_{W^{1}}(dw^{1}) \times \mu_{W^2}(dw^{2,1}) \times \cdots \times \mu_{W^2}(dw^{2,N_2})\times \delta_{ C^1_{\circ}}(dc^1) \times \cdots \times   \delta_{ C^{N_2}_{\circ}}(dc^{N_2}). \nonumber
\end{eqnarray}

Let $\pi^{N_1, N_2}$ be the probability measure of $\left(\gamma_t^{N_1, N_2} \right)_{0\leq t\leq 1}$. Each $\pi^{N_1, N_2}$ takes values in the set of probability measures $\mathcal{M} \big{(} D_E([0,1]) \big{)}$. Relative compactness, proven in Section \ref{RelativeCompactness}, implies that there is a subsequence $\pi^{N_{1_k}, N_2}$ which weakly converges. We must prove that any limit point $\pi^{N_2}$ of a convergent subsequence $\pi^{N_{1_k}, N_2}$ will satisfy the evolution equation (\ref{LimitSPDELLN}).
\begin{lemma}
Let $\pi^{N_{1_k}, N_2}$ be a convergent subsequence with a limit point $\pi^{N_2}$. Then, $\pi^{N_2}$ is a Dirac measure concentrated on $\gamma^{N_2} \in D_E([0,1])$ and $\gamma^{N_2}$ satisfies the measure evolution equation (\ref{LimitSPDELLN}).
\end{lemma}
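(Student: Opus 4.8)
The plan is the standard ``identification of the limit'' argument. Fix $N_2$. For $f$ in a countable convergence-determining subset of $C^2_b(\mathbb{R}^{d+2N_2})$ and rational $t\in[0,1]$, define the functional $F_{f,t}\colon D_E([0,1])\to\mathbb{R}$ by
\begin{align}
F_{f,t}(\gamma) &= \Big| \la f, \gamma_t \ra - \la f, \gamma_0 \ra - \int_0^t \int_{\mathcal{X}\times\mathcal{Y}} \big(y - g_s^{N_2}(x)\big)\, \la H_s^{N_2}(x)\cdot\nabla_c f, \gamma_s \ra\, \pi(dx,dy)\, ds \notag\\
&\quad - \int_0^t \int_{\mathcal{X}\times\mathcal{Y}} \big(y - g_s^{N_2}(x)\big)\, \la \sigma(w^1\cdot x)(\sigma'(Z_s)\odot c)\cdot\nabla_{w^2} f, \gamma_s \ra\, \pi(dx,dy)\, ds \notag\\
&\quad - \int_0^t \int_{\mathcal{X}\times\mathcal{Y}} \big(y - g_s^{N_2}(x)\big)\, \frac{1}{N_2}\sum_{i=1}^{N_2} \la c_i \sigma'(Z_s^{i,N_2}(x)) w^{2,i}\sigma'(w^1\cdot x) x \cdot\nabla_{w^1} f, \gamma_s \ra\, \pi(dx,dy)\, ds \Big| \wedge 1,
\end{align}
where $g_s^{N_2}(x)$, $Z_s^{i,N_2}(x)$, $H_s^{N_2}(x)$ are the explicit (self-referential) nonlinear functionals of $\gamma_s$ as in (\ref{LimitSPDELLN}); thus $F_{f,t}$ depends on the path $\gamma$ alone. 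A path $\gamma$ with initial value $\gamma_0^{N_2}$ solves (\ref{LimitSPDELLN})--(\ref{LimitSPDELLNmaintextb}) if and only if $F_{f,t}(\gamma)=0$ for all such $f$ and $t$.

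First I would show $\mathbb{E}^{\pi^{N_1,N_2}}\big[F_{f,t}\big]\to 0$ as $N_1\to\infty$. This follows from the pre-limit evolution equation (\ref{EvolutionLayer1}): evaluating $F_{f,t}$ along $\gamma^{N_1,N_2}$, the discrepancy from zero is controlled by (i) the martingale term $M^{N_1,N_2}(t)$, which vanishes in $L^2$ by (\ref{MartingaleTermL2}); (ii) the $O(N_1^{-1})$ remainder; and (iii) the difference between the pre-limit functionals $g_s^{N_1,N_2}$, $Z_s^{i,N_1,N_2}$ and the limiting $g_s^{N_2}$, $Z_s^{i,N_2}$ — but these are the \emph{same} functionals of the measure, so (iii) is identically zero. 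All integrands are uniformly bounded: by the compact containment estimate (Lemma \ref{L:CompactContainment}) the measures $\gamma_s^{N_1,N_2}$ are a.s.\ supported in a fixed compact set on which $\sigma$, $\sigma'$, $\nabla f$ are bounded, and $\mathcal{X}\times\mathcal{Y}$ is compact.

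Next I would pass to the weak limit: I want $\mathbb{E}^{\pi^{N_2}}\big[F_{f,t}\big]=\lim_k \mathbb{E}^{\pi^{N_{1_k},N_2}}\big[F_{f,t}\big]=0$, which requires $F_{f,t}$ to be bounded and $\pi^{N_2}$-a.s.\ continuous on $D_E([0,1])$. The subtle point — the main obstacle — is that evaluation $\gamma\mapsto\la h,\gamma_t\ra$ at a fixed $t$ is not Skorokhod-continuous. This is handled by the usual device: the jumps of $\gamma^{N_1,N_2}$ are $O(1/N_1)$ (cf.\ the increment bounds in Lemma \ref{L:regularity}), so any limit point $\gamma^{N_2}$ has continuous sample paths a.s.\ (and hence lies in $C_E([0,1])\subset D_E([0,1])$), and on $C_E([0,1])$ the time-$t$ evaluation map is continuous; alternatively one restricts $t$ to the (cocountable) set of continuity points of each path. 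Continuity of the nonlinear functionals $\gamma_s\mapsto g_s^{N_2}(x)$, $Z_s^{i,N_2}(x)$ and of their compositions with the bounded $C^2$ functions $\sigma,\sigma'$ again follows from the uniform compact support, and dominated convergence handles the $\pi(dx,dy)\,ds$ integration.

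Combining the previous two steps gives $F_{f,t}(\gamma)=0$ for $\pi^{N_2}$-a.e.\ $\gamma$, for all $f$ in the countable family and all rational $t$; by right-continuity of paths and density this extends to every $f\in C^2_b$ and every $t\in[0,1]$, and the initial condition $\gamma_0=\gamma_0^{N_2}$ is inherited from (\ref{EvolutionLayer1}). Hence $\pi^{N_2}$ is concentrated on solutions of (\ref{LimitSPDELLN})--(\ref{LimitSPDELLNmaintextb}). Finally, this evolution equation — equivalently the random ODE system (\ref{SystemLimitLayer1maintext}), whose initialization structure is built into $\gamma_0^{N_2}$ — has a unique solution by a Gr\"onwall argument identical in spirit to Sections \ref{AprioriBoundsSecondLayer}--\ref{S:PropertiesLimitingSystem}, using $\sigma\in C^2_b$ and compactness; therefore $\pi^{N_2}=\delta_{\gamma^{N_2}}$ for this unique path $\gamma^{N_2}$. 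Since every subsequence of $\{\gamma^{N_1,N_2}\}_{N_1}$ has a further subsequence converging in distribution to the \emph{same} $\gamma^{N_2}$, the whole sequence converges, which is Lemma \ref{LemmaFirstLayermaintext}.
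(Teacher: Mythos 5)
Your proposal is correct and follows essentially the same route as the paper: define a nonnegative functional measuring the deviation of a path from the evolution equation, show its expectation under $\pi^{N_1,N_2}$ is $O(N_1^{-1/2})$ via the pre-limit identity (\ref{EvolutionLayer1}) and the $L^2$-vanishing of the martingale term, pass to the limit by boundedness and continuity of the functional, and then invoke uniqueness of the solution to (\ref{LimitSPDELLN}) to conclude that the limit law is a Dirac mass. The only (harmless) differences are that the paper multiplies by additional test functionals $\la g_1,\gamma_{s_1}\ra\cdots\la g_p,\gamma_{s_p}\ra$ at earlier times in the martingale-problem style, whereas you take the bare functional, and that you are somewhat more explicit about the Skorokhod-continuity of the time-$t$ evaluation map (via the $O(1/N_1)$ jump bound), a point the paper glosses over when asserting that $F$ is continuous.
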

\begin{proof}
We define a map $F(\gamma): D_{E}([0,1]) \rightarrow \mathbb{R}_{+}$ for each $t \in [0,T]$, $f \in C^{2}_{b}(\mathbb{R}^{d+2N_2})$, $g_{1},\cdots,g_{p}\in C_{b}(\mathbb{R}^{d+2N_2})$ and $0\leq s_{1}<\cdots< s_{p}\leq t$.

\begin{eqnarray}
F(\gamma) &=&  \bigg{|} \bigg{(} \la f, \gamma_t \ra - \la f,  \gamma_0 \ra -\int_0^t   \int_{\mathcal{X}\times\mathcal{Y}}   \big{(} y -  g_s^{N_2}(x)   \big{)} \la H_t^{N_2}(x) \cdot \nabla_c f, \gamma^{N_2}_{s} \ra \pi(dx,dy)  ds \notag \\
&-& \int_0^t \int_{\mathcal{X}\times\mathcal{Y}}    \big{(} y -  g_s^{N_2}(x) \big{)} \la  \sigma(w^{1} \cdot x) (\sigma'(Z_{s}) \odot c) \cdot \nabla_{w^2} f, \gamma^{N_2}_{s} \ra \pi(dx, dy) ds \notag \\
&-& \int_0^t \int_{\mathcal{X}\times\mathcal{Y}}   \big{(} y -  g_s^{N_2}(x) \big{)} \frac{1}{N_2} \sum_{i=1}^{N_2} \la  c_i \sigma'(Z_s^{i, N_2}(x)) w^{2,i} \sigma'(w^{1} \cdot x) x \cdot \nabla_{w^{1}} f, \gamma^{N_2}_{s} \ra \pi(dx, dy) ds  \bigg{)} \notag \\
&\times& \la g_{1},\gamma_{s_{1}}\ra\times\cdots\times \la g_{p},\gamma_{s_{p}}\ra\bigg{|} .
\label{FmapMu}
\end{eqnarray}

Then,
\begin{eqnarray}
\mathbb{E}_{\pi^{N_1, N_2}} [ F(\gamma) ] &=& \mathbb{E} [ F( \gamma^{N_1,N_2} ) ] \notag \\
&=& \mathbb{E} \left| \left(M^{N_1, N_2 }(t)  + O(N_1^{-1})\right)\prod_{i=1}^{p} \la g_{i},\gamma^{N_1, N_2}_{s_{i}}\ra \right| \notag \\
&\leq&  \mathbb{E}[ | M^{N_1, N_2}(t) |  ] + O(N^{-1})  \notag \\
&\leq& \mathbb{E}[ ( M^{N_1, N_2}(t) )^2 ]^{1/2} + O(N_1^{-1})  \notag \\
&\leq& K ( \frac{1}{\sqrt{N}}+\frac{1}{N} ).\notag
\end{eqnarray}

Therefore,
\begin{eqnarray}
\lim_{N_1 \rightarrow \infty} \mathbb{E}_{\pi^{N_1, N_2}} [ F(\gamma) ] = 0.\notag
\end{eqnarray}
Since $F(\cdot)$ is continuous and $F( \gamma^{N_1, N_2})$ is uniformly bounded (due to the uniform boundedness results of Section \ref{RelativeCompactness}),
\begin{eqnarray}
 \mathbb{E}_{\pi^{N_2}} [ F(\gamma) ] = 0.\notag
\end{eqnarray}
Since this holds for each $t \in [0,T]$, $f \in C^{2}_{b}(\mathbb{R}^{d+2N_2})$ and $g_{1},\cdots,g_{p}\in C_{b}(\mathbb{R}^{d+2N_2})$, $\gamma^{N_2}$ satisfies the evolution equation (\ref{LimitSPDELLN}).
\end{proof}

It remains to prove that the evolution equation (\ref{LimitSPDELLN}) has a unique solution. This is the content of Section \ref{UniquenessFirstLayer}.

\subsection{Uniqueness} \label{UniquenessFirstLayer}

\begin{lemma} \label{UniquenessFirstLayerLemma}
There exists a unique solution to the evolution equation (\ref{LimitSPDELLN}).
\end{lemma}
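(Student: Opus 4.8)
The plan is to show that every solution of (\ref{LimitSPDELLN}) must coincide with the pushforward of the initial measure under the characteristic flow associated with the equation, and then to close a Gronwall estimate on that flow. This is the same strategy as in \cite{NeuralNetworkLLN}, and it also supplies the uniqueness assertion needed in Corollary \ref{CorollaryFirstLayermaintext}.

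First I would establish a priori bounds, arguing exactly as in Section \ref{AprioriBoundsSecondLayer} (or as in the derivation of (\ref{PreLimitUniformBound})): any solution $\gamma^{N_2}_t$ of (\ref{LimitSPDELLN}) is, for every $t\in[0,1]$, supported in a fixed compact set $\mathcal{K}_0 = [-K,K]^{d+2N_2}$, where $K$ depends only on the data $\mathcal{X}\times\mathcal{Y}$ and the initial supports. Consequently the maps $x\mapsto g^{N_2}_s(x)$ and $x\mapsto Z^{i,N_2}_s(x)$ are uniformly bounded, and the drift vector field read off from the right-hand side of (\ref{LimitSPDELLN}) --- call it $b_s[\gamma_s](v)$, with $v=(w^1,w^{2,1},\dots,w^{2,N_2},c^1,\dots,c^{N_2})$ --- is bounded and Lipschitz in $v$, uniformly over $s\in[0,1]$ and over measures supported in $\mathcal{K}_0$. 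Moreover $g^{N_2}_s$ and $Z^{i,N_2}_s$ are of the form $\langle \psi_x,\gamma_s\rangle$ with $\psi_x\in C_b^2$, so they depend on $\gamma_s$ in a Lipschitz way with respect to the bounded-Lipschitz (equivalently $1$-Wasserstein) distance $d_{BL}$; hence $b_s[\mu]-b_s[\nu]$ is controlled on $\mathcal{K}_0$ by $K\,d_{BL}(\mu,\nu)$.

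Next I would invoke the standard representation of weak solutions of a linear continuity equation with bounded Lipschitz drift: for a fixed solution $\gamma$, the characteristic ODE $\dot\phi_{s,t}(v)=b_t[\gamma_t](\phi_{s,t}(v))$ has a unique global flow on $\mathcal{K}_0$, and the only measure flow satisfying (\ref{LimitSPDELLN}) with this vector field is $\gamma_t=(\phi_{0,t})_\sharp\,\gamma^{N_2}_0$ (the superposition/uniqueness principle for continuity equations, see e.g. Chapter 8 of \cite{Ambrosio2008}; here it is elementary since the drift is globally Lipschitz on the compact set on which everything is supported). Equivalently, in the spirit of Corollary \ref{CorollaryFirstLayermaintext}, this says that any solution of (\ref{LimitSPDELLN}) is the conditional law, given $(C^1_\circ,\dots,C^{N_2}_\circ)$, of the particle system (\ref{SystemLimitLayer1maintext}).

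Finally, given two solutions $\gamma^{(1)},\gamma^{(2)}$ with the same initial data, let $\phi^{(1)},\phi^{(2)}$ be the associated flows and couple them through a common starting point $V_0\sim\gamma^{N_2}_0$. With $\Delta_t:=\mathbb{E}\big[\,|\phi^{(1)}_{0,t}(V_0)-\phi^{(2)}_{0,t}(V_0)|\,\big]$ and $d_{BL}(\gamma^{(1)}_t,\gamma^{(2)}_t)\le\Delta_t$, one has
\begin{align}
|\phi^{(1)}_{0,t}(V_0)-\phi^{(2)}_{0,t}(V_0)| &\le \int_0^t\Big( L\,|\phi^{(1)}_{0,s}(V_0)-\phi^{(2)}_{0,s}(V_0)| + \big\| b_s[\gamma^{(1)}_s]-b_s[\gamma^{(2)}_s]\big\|_{\infty,\mathcal{K}_0}\Big)\,ds, \notag
\end{align}
and the Lipschitz-in-measure bound gives $\big\| b_s[\gamma^{(1)}_s]-b_s[\gamma^{(2)}_s]\big\|_{\infty,\mathcal{K}_0}\le K\,\Delta_s$. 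Taking expectations and applying Gronwall's inequality yields $\Delta_t=0$ for all $t\in[0,1]$, hence $\gamma^{(1)}_t=\gamma^{(2)}_t$, which is the claim. The one delicate point is the representation step --- justifying that a weak measure solution is transported by its characteristic flow --- but the a priori compact-support bound makes the drift globally Lipschitz, reducing this to classical theory; alternatively it can be bypassed entirely by testing (\ref{LimitSPDELLN}) for $\gamma^{(1)}-\gamma^{(2)}$ against $f\in C_b^2$ and closing a Gronwall loop directly on $\sup\{|\langle f,\gamma^{(1)}_t-\gamma^{(2)}_t\rangle| : \|f\|_{C_b^1}\le 1\}$, which is the same estimate in dual form.
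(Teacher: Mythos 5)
Your proposal is correct, and it reaches the conclusion by a route that differs from the paper's in its two key ingredients. The paper identifies solutions of (\ref{LimitSPDELLN}) with (conditional) laws of the McKean--Vlasov particle system (\ref{SystemLimitLayer1}), invoking the general nonlinear-Markov-process correspondence of \cite{Kolokoltsov}, and then proves uniqueness of that particle system by a Picard/fixed-point argument on path space as in \cite{NeuralNetworkLLN}; along the way it also derives the conditional-expectation formula $Z_t^{i}(x)=\mathbb{E}\bigl[W_t^{2,i}H_t^{1}(x)\,\big|\,C_\circ^1,\dots,C_\circ^{N_2}\bigr]$, which is not just a proof device but the representation actually reused in Corollary \ref{CorollaryFirstLayermaintext} and in the $N_2\to\infty$ analysis. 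You instead freeze the drift, appeal to the superposition/uniqueness principle for linear continuity equations with Lipschitz coefficients to write each solution as the pushforward of $\gamma_0^{N_2}$ under its own characteristic flow, and then close a Sznitman-style coupling estimate in the bounded-Lipschitz metric via Gronwall. These are two standard faces of the same mechanism (a fixed-point contraction on trajectories versus a Gronwall bound on $d_{BL}$ of the measure flows), and both rest on the same Lipschitz estimates for the drift in the state and in the measure; your version is somewhat more self-contained and makes the metric in which uniqueness holds explicit, while the paper's version produces the particle-system representation that the rest of the argument needs anyway. Two small caveats: (i) your a priori compact-support bound is asserted for an \emph{arbitrary} measure solution by analogy with Section \ref{AprioriBoundsSecondLayer}, but that section bounds the characteristics, not a generic weak solution; to get moment/support control directly from the weak formulation one must test against smooth truncations of $|c|$, $\lVert w^1\rVert$, $|w^{2,i}|$ and pass to the limit (routine, but it is the step on which the "globally Lipschitz on a compact set" reduction hinges, and there is a mild circularity to untangle between the support bound and the flow representation); (ii) your argument only addresses uniqueness, with existence supplied elsewhere (by the particle system, or by the limit points guaranteed in Sections \ref{RelativeCompactness}--\ref{Identification}), whereas the paper's construction via (\ref{SystemLimitLayer1}) delivers both at once.
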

\begin{proof}

We only sketch the proof as the argument is similar to the uniqueness argument of \cite{NeuralNetworkLLN}. Consider the particle system:
\begin{eqnarray}
d C_t^i &=&  \int_{\mathcal{X} \times \mathcal{Y} } \big{(} y - g^{N_2}_t(x)  \big{)} H_t^{2,i}(x) \pi(dx,dy) dt, \phantom{.....} C^{i}_{0}= C^{i}_{\circ}, \phantom{....} i = 1, \ldots, N_2, \notag \\
 d W_t^{1} &=&  \int_{\mathcal{X} \times \mathcal{Y} } \big{(} y - g^{N_2}_t(x)  \big{)}  \left( \frac{1}{N_2} \sum_{i=1}^{N_2} C_t^i \sigma'(Z_t^{i}(x) )  W^{2,i}_t  \right) \sigma'(W_t^{1} \cdot x ) x \pi(dx,dy)  dt, \phantom{.....} W_0^{1} \sim \mu_{W^{1}}(dw), \notag \\
d W^{2,i}_{t} &=&   \int_{\mathcal{X} \times \mathcal{Y} } \big{(} y - g^{N_2}_t(x) \big{)} C_t^i \sigma'(Z_t^{i} )  H_t^1(x) \pi(dx,dy)  dt, \phantom{.....} W^{2,i}_0 \sim \mu_{W^2}(dw^2), \phantom{.....} i =1, \ldots, N_2, \notag \\
H_t^{1}(x) &=& \sigma ( W_t^{1} \cdot  x), \notag \\
%Z_t^{i}(x) &=& \mathbb{E} \bigg{[} W^{2,i}_t  H^{1}_t(x) \bigg{|} C^1_{\circ}, \ldots, C^{N_2}_{\circ} \bigg{]}, \notag \\
Z_t^{i}(x) &=& \left< w^{2,i}  H^{1}_t(x), \gamma^{N_2}_{t}\right>, \notag \\
H_t^{2,i}(x) &=& \sigma( Z_t^{i}(x) ), \notag \\
g^{N_2}_t(x) &=& \frac{1}{N_2} \sum_{i=1}^{N_2} C_t^i H_t^{2,i}(x).
\label{SystemLimitLayer1}
\end{eqnarray}

Let $\nu_{t,(c_1, \ldots, c_{N_2})}$ be the conditional law of $(W_t^{1}, W^{2,1}_t, \ldots W^{2,N_2}_t, C_t^{1}, \ldots, C_t^{N_2} )_{0 \leq t \leq T}$ given $(C^1_{\circ}, \ldots, C^{N_2}_{\circ}) = (c_1, \ldots, c_{N_2})$. Similarly to the general results of \cite{Kolokoltsov},  $\nu_{t,(C^1_{\circ}, \ldots, C^{N_2}_{\circ})}=\gamma^{N_2}_{t}$ is a solution to the evolution equation (\ref{LimitSPDELLN}) and conversely, any solution $\nu_{t,(C^1_{\circ}, \ldots, C^{N_2}_{\circ})}$ to the evolution equation (\ref{LimitSPDELLN}) must also be the law of the solution to (\ref{SystemLimitLayer1}).

Let us next prove that we can write $Z_t^{i}(x)=\mathbb{E} \bigg{[} W^{2,i}_t  H^{1}_t(x) \bigg{|} C^1_{\circ}, \ldots, C^{N_2}_{\circ} \bigg{]}$. Recall that $\mathcal{F}_C^{N_2} = ( C^1_{\circ}, \ldots, C^{N_2}_{\circ})$ and let us similarly define $\mathcal{F}_{W^{2}}^{N_2} = ( W^{2,1}_{0},  \ldots, W^{2,N_2}_{0})$. Inspecting (\ref{SystemLimitLayer1}) it becomes clear that the random variables $(W_t^{1}, W^{2,1}_t, \ldots W^{2,N_2}_t, C_t^{1}, \ldots, C_t^{N_2} )_{0 \leq t \leq T}$ depend, in addition to their own initial conditions, also on $\mathcal{F}_C^{N_2}$ and $\mathcal{F}_{W^2}^{N_2}$ in a symmetric way through the terms $g^{N_2}_t(x)$ and $\frac{1}{N_2} \sum_{i=1}^{N_2} C_t^i \sigma'(Z_t^{i}(x) )  W^{2,i}_t$. In order to make this dependence specific in the notation, we write in particular that
\[
W^{1}_{t}=W^{1,W_{0};\mathcal{F}_C^{N_2},\mathcal{F}_{W^2}^{N_2}}_{t} \text{, }W^{2,i}_{t}=W^{2,C^i_{\circ}, W_{0},W^{2,i}_{0};\mathcal{F}_C^{N_2},\mathcal{F}_{W^2}^{N_2}}_{t} \text{ and } C^{i}_{t}=C^{C^i_{\circ};\mathcal{F}_C^{N_2}}_{t}.
\]

This, then leads to the following calculations
\begin{align}
Z_t^{i}(x) &= \left< w^{2,i}  H^{1}_t(x), \gamma^{N_2}_{t}\right>\nonumber\\
&=\mathbb{E}\bigg{[} W^{2,i}_t  \sigma ( W_t^{1} \cdot  x) \bigg{]}\nonumber\\
&=\mathbb{E}\bigg{[}W^{2,C^i_{\circ}, W_{0},W^{2,i}_{0};\mathcal{F}_C^{N_2},\mathcal{F}_{W^2}^{N_2}}_{t} \sigma ( W^{1,W_{0};\mathcal{F}_C^{N_2},\mathcal{F}_{W^2}^{N_2}}_{t} \cdot  x) \bigg{]}\nonumber\\
&=\mathbb{E}\bigg{[} \mathbb{E}\bigg{[}W^{2,C^i_{\circ}, W_{0},W^{2,i}_{0};\mathcal{F}_C^{N_2},\mathcal{F}_{W^2}^{N_2}}_{t}  \sigma ( W^{1,W_{0};\mathcal{F}_C^{N_2},\mathcal{F}_{W^2}^{N_2}}_{t} \cdot  x) \bigg{|} W_{0}, \mathcal{F}_{W^{2}}^{N_2},\mathcal{F}_C^{N_2} \bigg{]}\bigg{]}\nonumber\\
&=\left<W^{2,C^i_{\circ}, W_{0},W^{2,i}_{0};\mathcal{F}_C^{N_2},\mathcal{F}_{W^2}^{N_2}}_{t} \sigma (W^{1,W_{0};\mathcal{F}_C^{N_2},\mathcal{F}_{W^2}^{N_2}}_{t} \cdot  x), \gamma_{0}^{N_2}\right>\nonumber\\
&= \mathbb{E}\bigg{[}W^{2,C^i_{\circ}, W_{0},W^{2,i}_{0};\mathcal{F}_C^{N_2},\mathcal{F}_{W^2}^{N_2}}_{t}  \sigma ( W^{1,W_{0};\mathcal{F}_C^{N_2},\mathcal{F}_{W^2}^{N_2}}_{t} \cdot  x) \bigg{|} \mathcal{F}_C^{N_2} \bigg{]}\nonumber\\
&=\mathbb{E} \bigg{[} W^{2,i}_t  \sigma ( W_t^{1} \cdot  x) \bigg{|} C^1_{\circ}, \ldots, C^{N_2}_{\circ} \bigg{]}\nonumber
\end{align}

The second to the last equality is due to the assumed independence of the initial conditions via Assumption \ref{A:Assumption1} together with the fact that the marginal of $\gamma_{0}^{N_2}$ with respect to $C^1_{\circ}, \ldots, C^{N_2}_{\circ}$ is a product of delta Dirac distributions.

It remains to show that the solution to (\ref{SystemLimitLayer1}) is unique. We do this via a fixed point argument, completely analogous to  Section 4 of  \cite{NeuralNetworkLLN}. The details are omitted due to the similarity of the argument.

\end{proof}

\subsection{Convergence Result for First Layer}  \label{ConvergenceFirstLayer}

Let $\pi^{N_1, N_2}$ be the probability measure corresponding to $\gamma^{N_1, N_2}$. Each $\pi^{N_1, N_2}$ takes values in the set of probability measures $\mathcal{M} \big{(} D_E([0,1]) \big{)}$. Relative compactness, proven in Section \ref{RelativeCompactness}, implies that every subsequence $\pi^{N_{1_k}, N_2}$ has a further sub-sequence $\pi^{N_{1_{k_m}}, N_2}$ which weakly converges. Section \ref{Identification} proves that any limit point $\pi^{N_2}$ of $\pi^{N_{1_{k_m}}, N_2}$ will satisfy the evolution equation (\ref{LimitSPDELLN}). Section \ref{UniquenessFirstLayer} proves that the solution of the evolution equation (\ref{LimitSPDELLN}) is unique. Therefore, by Prokhorov's Theorem, $\pi^{N_1, N_2}$ weakly converges to $\pi^{N_2}$, where $\pi^{N_2}$ is the distribution of $\gamma^{N_2}$, the unique solution of (\ref{LimitSPDELLN}). That is, $\gamma^{N_1, N_2}$ converges in distribution to $\gamma^{N_2}$.

\bibliographystyle{plainnat}
%\bibliographystyle{natbib}
%\bibliography{was}

%%%%%%%%%%%%%%%%%%%%%%%%%%%%%%%%%%%%%%%%%%%%%%%%%%%%%%%%%%%%%%%%%
%{\large

\renewcommand{\refname}{Bibliography}

\end{document}